\numberwithin{equation}{section}
\newtheorem{lemma}{Lemma}[section]
\newtheorem{theorem}[lemma]{Theorem}
\newtheorem{proposition}[lemma]{Proposition}
\newtheorem{definition}[lemma]{Definition}
\newtheorem{corollary}[lemma]{Corollary}
\newtheorem{example}[lemma]{Example}
\newtheorem{exercise}[lemma]{Exercise}
\newtheorem{remark}[lemma]{Remark}
\newtheorem{fig}[lemma]{Figure}
\newtheorem{tab}[lemma]{Table}
\newcommand{\bth}{\begin{theorem}}
\newcommand{\ethe}{\end{theorem}}
\newcommand{\bre}{\begin{remark}\em }
\newcommand{\ere}{\end{remark}\noindent}
\newcommand{\ble}{\begin{lemma}}
\newcommand{\ele}{\end{lemma}\noindent}
\newcommand{\bde}{\begin{definition}}
\newcommand{\ede}{\end{definition}\noindent}
\newcommand{\bco}{\begin{corollary}}
\newcommand{\eco}{\end{corollary}\noindent}
\newcommand{\bpr}{\begin{proposition}}
\newcommand{\epr}{\end{proposition}\noindent}
\newcommand{\bexer}{\begin{exercise}}
\newcommand{\eexer}{\end{exercise}}
\newcommand{\bexam}{\begin{example}}
\newcommand{\eexam}{\end{example}}
\newcommand{\bfi}{\begin{fig}}
\newcommand{\efi}{\end{fig}}
\newcommand{\btab}{\begin{tab}}
\newcommand{\etab}{\end{tab}}
\newcommand{\beao}{\begin{eqnarray*}}
\newcommand{\eeao}{\end{eqnarray*}\noindent}
\newcommand{\beam}{\begin{eqnarray}}
\newcommand{\eeam}{\end{eqnarray}\noindent}
\newcommand{\bce}{\begin{center}}
\newcommand{\ece}{\end{center}}
\newcommand{\barr}{\begin{array}}
\newcommand{\earr}{\end{array}}
\newcommand{\bdis}{\begin{displaymath}}
\newcommand{\edis}{\end{displaymath}\noindent}
\newcommand{\ep}{\epsilon}
\newcommand{\la}{\lambda}
\newcommand{\eid}{\buildrel{\rm d}\over {=}}
\newcommand{\drt}{{D\sqrt{1-t}}}
\newcommand{\gt}{\Gamma_{\tau}}
\begin{document}

\title[Extremes of L\'evy processes]
{Extremes of L\'evy processes with light tails}
\author[M. Braverman]{Michael Braverman}
\address{Department of Mathematics\\ 
Ben-Gurion University of the Negev\\
P.O. Box 653 \\
Beer-Sheva 84105\\ 
Israel}
\email{braver@math.bgu.ac.il}
\thanks{Research supported by Kamea programm}

\keywords{ Poisson process, Brownian motion,
   supremum\vspace{.5ex}}

\begin{abstract}
Let $X(t)\, t\ge0\,, X(0)=0$, be a L\'evy process with spectral L\'evy measure $\rho$.
Assuming that $\rho((-\infty,0))<\infty$ and the right tail of $\rho$ is light, we show that in
the presence of Brownian component
$$
P\left(\sup_{0\le t \le 1}X(t)>u \right) \sim P\left(X(1)>u \right)\
$$
as $u\to\infty$. In the absence of Brownian component  these tails are not always comparable.  
An example of L\'evy process of the type $X(t)=B(t)+Z(t)$, where $B(t)$ is a Brownian motion
and $Z(t)$ is a compound Poisson process with positive jumps, 
for which these tails are incomparable is also given.
\end{abstract}

\maketitle

\section{Introduction}
\label{s:int}

The problem of finding asymptotics of the probabilities $P(\sup_{t\in T}X(t)>u)$ as $u\to\infty$, where 
$X(t)$ is a stochastic process, is a classical one. It was intensively studied, but  many
unsolved questions still remain. 

In what follows $T=[0,1]$ and $X(t)$ is a L\'evy process, $X(0)=0$. Its characteristic function is given by the  well
known L\'evy--Khintchin formula
$$
E\exp\left(isX(t)\right) = \exp\left(t\psi(s)\right)\,,
$$
where
\beam
\label{L.exp}
\psi(s) = -ibt -\frac{\sigma^2s^2}2 +\int_{-\infty}^\infty\left(e^{isx}-1 - isx{\mathbf 1}(|x|\le1)\right)\rho(dx)\,.
\eeam
Here $b\in {\mathbf R}, \sigma\ge0$ and $\rho$ is a Borel measure such that 
$\int_{-\infty}^\infty \min\{1, x^2\}\rho(dx)<\infty$ (the L\'evy measure). 

If $\sigma$ is strictly
positive, then the process can be represent as a sum of independent Brownian motion $B(t)$ and
another L\'evy process $X_1(t)$. In the case $\rho(\mathbf R) <\infty$ the last process
is a compound Poisson. So, if the  L\'evy measure is finite, we can write
\beam
\label{def.X}
X(t) = \sigma B(t)+Z(t) -bt,\,\,t\ge 0,
\eeam
where $Z(t)$ is a compound Poisson process with the parameter $\la=\rho(\mathbf R)$. It means that   
\beam
\label{comp.P}
Z(t) = \sum_{k=1}^{N(t)}X_k \,,
\eeam
where  $N(t)$ is a Poisson process with parameter $\la$ independent of iid random
variables $\{X_k\}_{k=1}^\infty$  (the {\it jumps} of the process).  

  One of the approaches to the mentioned problem is to establish a relation  
\beam
\label{sup.b}
P\left(\sup_{0\le t \le 1}X(t)>u \right) \sim aP\left(X(1)>u \right)\quad\mbox{as $u\to\infty$}
\eeam
where $a$ is a constant.  Then L\'evy-Khinchin formula allows to derive the asymptotics of the right hand 
side probabilities  by powerful analytical tools.

The first result of type (\ref{sup.b}) is L\'evy theorem, which states that for  Brownian
motion  $B(t)\,, t\ge 0$ the following holds:
\beam
\label{B.sup}
P\left(\sup_{0\le t \le 1}B(t)>u \right) = 2P\left(B(1)>u \right)
\eeam
for all $u>0$. During recent years (\ref{sup.b})
was established for various classes of L\'evy processes (see \cite{bib:1}--\cite{bib:5},
\cite{bib:8}, \cite {bib:9},  \cite{bib:10}). 
One of the methods used in these studies is to represent the process in the form
$X(t) = Y(t)+Z(t)$, where $Y(t)$ and $Z(t)$ are independent, $Z(t)$ is a compound Poisson process 
and $Y(t)$ is a L\'evy process
for which $E\exp(c|Y(t)|)<\infty$ for each $c>0$. Assuming the distribution of the jumps of $Z(t)$ 
to be {\it heavy}, (subexponential
or exponential), one first establishes  (\ref{sup.b})  for this process.
Such distributions possess the following property: if $X$ and $Y$ are independent random variables, 
the tail of $X$ is heavy    
and $P(Y>u) = o(P(X>u))$ as $u\to\infty$, then $P(X+Y>u)\sim bP(X>u)$ as $u\to\infty$, where $b$ is a constant.
Using it, on can pass from $Z(t)$ to $X(t)$ (see, for example,
\cite{bib:5} and \cite{bib:9} and references threin).   

But such approach does not work if jumps have a {\it light tail} in the sense of \cite{bib:4}.
So, other methods are called for.

In what follows $C$ denotes a generic constant
which value may vary from line to line. As usually, $F_Y$ stands for the disrtibution of a random variable $Y$.
Througout the paper $\{X_k\}_{k=1}^\infty$ are iid random variables, $S_k=X_1+\cdots+X_k,\,k\ge1,\, S_0=0$. 
\section{Results}
\label{s:R}

We say that the distribution  of a random variable $X$ has  {\it light} right tail if one of the following conditions holds:
\beam
\label{light1}
P(X_1>u)>0\quad \mbox{for all $u>0$\,\, and}\quad\lim_{u\to\infty}\frac{P(X_1>u)}{P(X_1+X_2>u)}=0\,,
\eeam
where $X_1$ and $X_2$ are independent copies of $X$, or 
\beam
X \le A \quad\mbox{a.s. and $P(X>\alpha)>0$}
\label{light2}
\eeam
for  positive constants $A$ and $\alpha$. 

It is known that $X$ has a light tail if and only if $X^+:=\max\{X,0\}$ has it 
(see \cite{bib:4}, Lemma 2).

In what follows we assume that 
\beam
\label{rho.1}
\rho((0,\infty))>0\,
\eeam
and
\beam
\label{rho.2}
\rho((-\infty, 0))<\infty\,.
\eeam
Clearly, (\ref{rho.1}) implies $\rho((a,\infty))>0$ for some  positive $a$.  The third assumption is: 
\beam
\label{rho.3}
\mbox{for some $a>0$ the disribution function}\quad F_{\rho}(x) = 1 - \frac{\rho((\min\{x,a \},\infty))}{\rho((a,\infty))}
\quad\mbox{has light tail.}
\eeam
Our main result is the following
\begin{theorem}
\label{t:main}
Let $\sigma>0$  and (\ref{rho.1})--(\ref{rho.3}) hold. Then for each $b\in\mathbf R$
\beam
\label{maineq}
\lim_{u\to\infty}\frac{P(\sup_{0\le t\le1}X(t)>u)}{P(X(1)>u)}=1\,.
\eeam
\end{theorem}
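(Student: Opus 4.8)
The plan is as follows. Since $P(\sup_{0\le t\le1}X(t)>u)\ge P(X(1)>u)$ trivially, (\ref{maineq}) is equivalent to the one-sided estimate $P(\sup_{0\le t\le1}X(t)>u,\,X(1)\le u)=o(P(X(1)>u))$. I would first trade the supremum for the endpoint via the duality lemma for L\'evy processes: as pairs,
$$
\Big(\sup_{0\le t\le1}X(t),\,X(1)\Big)\eqd\big(X(1)-\inf_{0\le t\le1}X(t),\,X(1)\big),
$$
so that, putting $R:=-\inf_{0\le t\le1}X(t)\ge0$, the quantity to be controlled is $P(u-R<X(1)\le u)$.

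Next I would move into the Cram\'er regime by an exponential change of measure. Assumptions (\ref{rho.2})--(\ref{rho.3}) make $\Psi(\theta):=\log Ee^{\theta X(1)}$ finite for every $\theta\ge0$ (near $0$ use $\int_{|x|\le1}x^2\rho(dx)<\infty$, for $x>1$ the light right tail of $F_\rho$, for $x<-1$ use (\ref{rho.2})); $\Psi$ is smooth, strictly convex, and $\Psi'(\theta)\to\infty$ because $\sigma>0$, so there is a unique $\theta_u\uparrow\infty$ with $\Psi'(\theta_u)=u$. Let $P_u$ be the Esscher transform by $e^{\theta_u X(1)-\Psi(\theta_u)}$; under $P_u$, $X$ is again a L\'evy process with the same $\sigma$, L\'evy measure $e^{\theta_u x}\rho(dx)$ and $E_uX(1)=u$. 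With $D_u:=E_u[e^{-\theta_u(X(1)-u)}\mathbf{1}(X(1)>u)]$ one has $P(X(1)>u)=e^{\Psi(\theta_u)-\theta_u u}D_u$, and the same factor appears in the joint probability, so by the displayed identity
$$
\frac{P(\sup_{0\le t\le1}X(t)>u)}{P(X(1)>u)}=1+\frac{E_u\big[e^{\theta_u(u-X(1))}\mathbf{1}(u-R<X(1)\le u)\big]}{D_u},
$$
and it remains to show the last quotient tends to $0$.

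Because $\sigma>0$, $X(1)$ has a bounded density, and a local central limit theorem for the tilted law yields $D_u\asymp\big(\theta_u\sqrt{\Psi''(\theta_u)}\,\big)^{-1}$ together with $P_u\big(X(1)\in(u-a,u]\big)\le C\,a/\sqrt{\Psi''(\theta_u)}$ for $a\le\sqrt{\Psi''(\theta_u)}$. One then needs $R$ to be small under $P_u$: the intensity of positive jumps above a fixed level is of order $u$ up to a slowly varying factor, so the first sizeable positive jump occurs by a time of order $u^{-1}$ (times a slowly varying factor), after which $X$ is lifted permanently away from $0$; hence the infimum comes only from the Brownian and compensated-small-jump fluctuations on that short initial interval, giving $R=O_{P_u}\!\big(u^{-1/2}\big)$ up to slowly varying factors, so $\theta_u R\cip0$, with the Wald bound $P_u(R>y)\le e^{-\gamma_u y}$ (where $\Psi(\theta_u-\gamma_u)=\Psi(\theta_u)$ and $\gamma_u-\theta_u\to\infty$) handling atypically large $R$. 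Bounding $e^{\theta_u(u-X(1))}\le e^{\theta_uR}$ on the event, splitting at a level $a_u$ a fixed multiple of the typical size of $R$, and using the density bound on $\{R\le a_u\}$ and the Wald bound on $\{R>a_u\}$, one obtains $E_u[\cdots]=o\big(\theta_u^{-1}/\sqrt{\Psi''(\theta_u)}\,\big)=o(D_u)$.

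The main obstacle is precisely this last estimate. The event $\{u-R<X(1)\le u\}$ forces the tilted path both to descend to depth $R$ and to climb back to within $R$ of $u$; once $R$ exceeds its typical scale this is very costly, since the time spent on the descent must be made up by an atypical upward deviation, and this has to be quantified uniformly in $u$ — which is exactly where the Brownian component is indispensable, supplying the density and local-limit estimates and the sharp control of the short initial excursion. The failure of this mechanism when $\sigma=0$ is what the announced counterexample (a compound Poisson process with light positive jumps) exploits.
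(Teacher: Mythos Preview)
Your approach is entirely different from the paper's. The paper never tilts the measure; after reducing to $X(t)=B(t)+Z(t)-bt$ via Lemma~\ref{l:gen}, it splits the supremum at the last jump time $\Gamma_\tau$, handles $[0,\Gamma_\tau)$ by quoting \cite{bib:4}, and on $[\Gamma_\tau,1]$ uses the Brownian reflection identity of Lemma~\ref{l:sym} (namely $P(X+|Y|>u)=2P(X+Y>u)-P(X>u+|Y|)$ for symmetric $Y$ independent of $X$) together with lengthy direct estimates on the terms $P(B(1)+S_k>u)$ and $P(B(t)+S_k>u+D\sqrt{1-t})$. No exponential moments are ever invoked, which is essential because the hypotheses do not supply them.

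Your sketch has a genuine gap precisely there. First, the light-tail condition (\ref{rho.3}) does \emph{not} force $\Psi(\theta)<\infty$ for all $\theta\ge0$: an exponential right tail $\rho((x,\infty))\asymp e^{-cx}$ satisfies (\ref{light1}) (since then $P(X_1+X_2>u)\asymp u\,e^{-cu}$), yet $\int e^{\theta x}\rho(dx)=\infty$ for $\theta\ge c$, so $\theta_u\to c$ rather than $\theta_u\uparrow\infty$. More seriously, your Wald bound $P_u(R>y)\le e^{-\gamma_u y}$ with $\gamma_u-\theta_u\to\infty$ requires $\Psi$ to be finite at arguments tending to $-\infty$, i.e.\ all exponential moments of the \emph{negative} jumps; assumption (\ref{rho.2}) says only that $\rho((-\infty,0))<\infty$, so those jumps may be heavy-tailed and $\Psi(\theta)=\infty$ for every $\theta<0$. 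The best Lundberg exponent you are guaranteed is $\gamma_u=\theta_u$ (from $\Psi(0)=0\le\Psi(\theta_u)$), and with that the bound $P_u(R>y)\le e^{-\theta_u y}$ exactly cancels the weight $e^{\theta_u R}$ on $\{R>a_u\}$, leaving a divergent integral; the splitting argument therefore does not close. So even if one grants the local-CLT asymptotics for $D_u$ and the heuristic $\theta_u R\stackrel{P_u}{\to}0$, the tail of $R$ is not controlled well enough under the stated hypotheses, and the proof breaks down.
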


It should be mentioned that under the additional condition:
$$
\lim_{u\to\infty}\frac{1-F_{\rho}(x+c)}{1-F_{\rho}(x)} = e^{-\alpha c} \quad\mbox{for any real $c$ and  a constant 
$\alpha>0$,} 
$$
 and whithout assumption  (\ref{rho.2}), this statement was proved in
\cite{bib:44} and \cite{bib:1}.

In the case $\sigma=0$ and $\rho(\mathbf R)<\infty$ the process is  compound Poisson with drift. 
It is known that (\ref{maineq}) holds
 for such processes with $b\le 0$, but this limit  
 not always exists if $b>0$ (see \cite{bib:4}). Our next result gives a condition
under which this relation holds for $b>0$ in the absence of Brownian component.

Assume that $P(X>u)>0$ for all $u>0$ and
\beam
\label{cond.pl}
   \lim_{u\to\infty}\frac{P(X>u+a)}{P(X>u)}=0\quad \mbox{for a constant $a>0$}.
\eeam
For independent copies $X_1$ and  $X_2$ of  $X$ we have  
$P(X_1+X_2>u)\ge P(X_1>u-a)P(X_2>a)$, which implies (\ref{light1}). Hence the right tail of $X$ 
is light. 

If the tail of $X$ is given in the form
\beam
\label{tail.X1}
1-F_X = \exp\left(-\int_0^uh(v)dv \right)\,,\, u>u_0\,, 
\eeam
 where $u_0\ge 0$ is a constant and $h$ is a positive  function on $(u_0,\infty)$ 
such that
\beam
\label{cond.inf}
h(v)\to\infty \quad\mbox{ as $v\to\infty$},
\eeam
then (\ref{cond.pl}) holds and, therefore, $X$ has a light tail.

\begin{theorem}
\label{t:pos}
Assume (\ref{rho.1})--(\ref{rho.3}) hold, $\sigma=0$ and the function $F_\rho$ from
(\ref{rho.3}) can be represented in the form (\ref{tail.X1}) with (\ref{cond.inf}). 
Assume also that the function $h$ is continuous,  increasing and satisfies the condition
\beam
\label{cond.h}
h(v + b) \le \exp\left(\frac{bh(v)}8 \right)
\eeam
for $v$ large enough. Then (\ref{maineq}) holds for each $b\in \mathbf R$.
\end{theorem}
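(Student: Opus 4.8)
The plan is to reduce the supremum problem for the drifted compound Poisson process $X(t)=Z(t)-bt$ (the case $b>0$ is the only nontrivial one, since for $b\le 0$ the claim is already known) to a statement about the partial sums $S_k$ of the jumps, exploiting that between jumps $X(t)$ is linear with negative slope $-b$, so the running maximum is attained at a jump time. Concretely, $\sup_{0\le t\le1}X(t) = \max_{0\le k\le N(1)}(S_k - b\tau_k)$, where $\tau_k$ is the time of the $k$-th jump and $N(1)$ is Poisson$(\la)$. Since $\tau_k\in[0,1]$, we have $S_k - b \le S_k - b\tau_k \le S_k$, so the event $\{\sup_t X(t)>u\}$ is sandwiched between $\{\max_{k\le N(1)} S_k > u\}$ and $\{\max_{k\le N(1)} S_k > u + b\}$. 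The lower bound on the supremum probability is immediate from $X(1)\le \sup_t X(t)$, so the whole content is the matching upper bound: one must show $P(\max_{0\le k\le N(1)} S_k > u) \le (1+o(1))P(X(1)>u)$, and then absorb the harmless shift by $b$ using the light-tail slow-variation-type estimate (\ref{cond.pl}) applied to the relevant tail.

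The key steps, in order. First, condition on $N(1)=n$: then $\sup_{0\le t\le1}X(t) \le \max_{1\le k\le n} S_k$ (with the max over an empty set giving $0$), and one needs tail bounds on $\max_{k\le n} S_k$ that are summable against the Poisson weights $e^{-\la}\la^n/n!$. Second, invoke the structure (\ref{tail.X1})--(\ref{cond.inf}) of $F_\rho$: writing $1-F_X=\exp(-\int_0^u h)$ with $h\uparrow\infty$ continuous and increasing, the jump distribution is light in a strong, quantitative sense. The heart of the argument is a \emph{one-big-jump} principle: for such $h$, the dominant contribution to $\{S_n>u\}$ comes from the event that the largest jump is close to $u$ while the remaining $n-1$ jumps sum to a bounded amount, and crucially $P(S_n>u)\sim n\,P(X_1>u)$ fails here — instead, because the tail decays faster than exponentially, the dominant mechanism for $S_n > u$ is that \emph{all} the mass is distributed to make each jump roughly $u/n$, OR more precisely the relevant comparison is between $P(\max_{k\le n}S_k > u)$ and $P(S_n>u)$, and one shows the running maximum does not inflate the tail: $P(\max_{k\le n}S_k>u)\le C_n P(S_n>u)$ with constants $C_n$ controllable. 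Third, and this is where condition (\ref{cond.h}), $h(v+b)\le \exp(bh(v)/8)$, enters: it guarantees that the slope-induced shift from $S_k$ to $S_k - b\tau_k$, and the shift from $u$ to $u+b$, costs only a factor tending to $1$; more importantly it controls how much the ladder-height/running-maximum correction can distort the tail, by bounding the ratio $h(u\pm b)/h(u)$ and hence $\int_u^{u+b}h$, on the relevant scale. Finally, reassemble: show $\sum_n e^{-\la}\la^n/n!\,P(\max_{k\le n}S_k>u) \le (1+o(1))\sum_n e^{-\la}\la^n/n!\,P(S_n > u) = (1+o(1))P(Z(1)>u)$, and then pass from $Z(1)$ to $X(1)=Z(1)-b$ via (\ref{cond.pl}).

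The main obstacle I expect is the one-big-jump / running-maximum estimate for light (faster-than-exponential) tails: unlike the subexponential or exponential case quoted in the introduction, here $P(S_n>u)$ is \emph{not} asymptotically $nP(X_1>u)$, and the combinatorics of how $u$ gets split among $n$ summands genuinely matters, so one cannot simply cite a standard convolution tail asymptotic. The delicate point is to obtain bounds on $P(\max_{k\le n}S_k>u)/P(S_n>u)$ that are uniform enough in $n$ to survive summation against the Poisson weights, and this is precisely what the growth condition (\ref{cond.h}) is engineered to supply — it translates "$h$ does not grow too wildly" into a usable Laplace-type / Chernoff estimate on the shifted tails $1-F_\rho(u+b)$ relative to $1-F_\rho(u)$. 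A secondary technical nuisance is handling the drift contribution $b\tau_k$ uniformly over $k\le N(1)$ and over the random configuration of jump times, but since $\tau_k\in[0,1]$ this only ever costs the bounded shift by $b$, which (\ref{cond.pl}) and (\ref{cond.h}) dispatch.
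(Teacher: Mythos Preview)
Your proposal has a genuine gap at its core: the ``harmless shift by $b$'' is not harmless at all. Under condition (\ref{cond.pl}), which holds for the jump distribution by the hypotheses (\ref{tail.X1})--(\ref{cond.inf}), the paper shows (Proposition~\ref{p:pl}) that the compound Poisson variable $Z(1)$ inherits the same property: $P(Z(1)>u+b)/P(Z(1)>u)\to 0$. Consequently your upper bound $P(\sup_t X(t)>u)\le P(\max_{k\le N(1)}S_k>u)\sim P(Z(1)>u)$ overshoots the target $P(X(1)>u)=P(Z(1)>u+b)$ by a factor tending to infinity, and the sandwich $\{\max_k S_k>u+b\}\subset\{\sup_t X(t)>u\}\subset\{\max_k S_k>u\}$ is useless for the upper bound. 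The running-maximum comparison $P(\max_k S_k>u)\sim P(Z(1)>u)$ that you flag as the ``main obstacle'' is in fact already Theorem~1 of \cite{bib:4} and is not where the difficulty lies.

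The paper does \emph{not} discard the jump times. Via \cite{bib:4} the problem reduces to showing
\[
P\bigl(Z(1)>u+b\Gamma_\tau\bigr)\sim P\bigl(Z(1)>u+b\bigr),
\]
where $\Gamma_\tau$ is the \emph{last} jump time before $1$. The whole content of the proof is that on the event $\{Z(1)>u\}$ the number of jumps $\tau$ is typically large, forcing $\Gamma_\tau$ close to $1$, closely enough that the residual shift $b(1-\Gamma_\tau)$ does not blow up the tail. This is made precise by splitting according to whether $\Gamma_\tau\le a_\tau$ (handled by Lemma~\ref{l:int}), whether $S_{k-1}$ is below or above the threshold $g(k,u)=u-h^{-1}(4b^{-1}\log k)$ (Lemmas~\ref{l:gamma} and \ref{l:J}), and whether $S_{k-1}>u$ (Lemma~\ref{l:G}). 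Condition (\ref{cond.h}) enters only in Lemma~\ref{l:J}: it gives $h(\psi(k)+b)\le\sqrt{k}$, which is exactly what is needed so that the factor $\exp\bigl(b(1-t)h(v)\bigr)$ arising from the ratio $P(X_1>u+bt-y)/P(X_1>u+b-y)$ stays close to $1$ when $t\ge a_k$ and $y$ is in the relevant window. Your proposal misidentifies both the obstacle and the role of (\ref{cond.h}).
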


Condition (\ref{cond.h}) means that the function $h(v)$ cannot grow too fast as $v\to\infty$.
If $h(v)= \exp(g(v))$ and $g(v+a)\le C(a)g(v)$ for  positive $a$ and $v$, then
(\ref{cond.h}) holds. Another examples are $h(v) = v^c$,
and $h(v) = [\log(v+1)]^c$, where $c$ is a positive constant.
 In can be easily  verified that  $X_1$ with a normal distiribution satisfies the conditions of the theorem.
 Therefore, (\ref{maineq}) holds for compound Poisson
processes with normal jumps and negative drifts. 

As it was shown in \cite{bib:4}, relation (\ref{sup.b}) does not hold  if $X(t)$ 
is a compound Poisson process with negative drift and 
jumps having a lattice 
distribution bounded from above. 
The following result shows that the condition of boundedness can be ommited.

\begin{theorem}
\label{t:neg}
Let (\ref{def.X}) hold with $\sigma=0$,  
and jumps $X_k$ having a lattice distribution with a minimal step $a$. Assume that 
\beam
\label{cond.pl.lat}
P(X_1>na)>0\quad\mbox{for all $n\in {\mathbf N}$\,\, and}\quad    \lim_{n\to\infty}\frac{P(X_1>(n+1)a)}{P(X_1>na)}=0\,.
\eeam 
Then for each $b>0$
\beam
\label{limsup}
\limsup_{u\to\infty}\frac{P\left(\sup_{0\le t\le 1}X(t)>u \right)}{P(X(1)>u)} = \infty
\eeam
and 
\beam
\label{liminf}
\liminf_{u\to\infty}\frac{P\left(\sup_{0\le t\le 1}X(t)>u \right)}{P(X(1)>u)} = 1\,.
\eeam
\end{theorem}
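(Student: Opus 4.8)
The plan is to establish the two equalities by different, carefully positioned sequences $u=u_n\to\infty$, using that $u\mapsto P(X(1)>u)$ is a right-continuous step function with jumps exactly at the points $na-b$ (indeed $X(1)=Z(1)-b$ and, since $Z(t)$ takes values in $a\mathbf{Z}$, $Z(1)\in a\mathbf{Z}$), whereas $u\mapsto P(\sup_{0\le t\le1}X(t)>u)$ has no atoms. Write $q_n:=P(Z(1)\ge na)$; by (\ref{cond.pl.lat}) each $q_n>0$ and $q_n\to0$. The inequality $\liminf\ge1$ in (\ref{liminf}) is immediate from $\sup_{0\le t\le1}X(t)\ge X(1)$, so only (\ref{limsup}) and the bound $\liminf\le1$ need proof.

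For (\ref{limsup}): set $u_n=na-b+bq_n^{2}$ and $\eta_n=1-2q_n^{2}$. For all large $n$ one has $u_n+b\in(na,(n+1)a)$, so $P(X(1)>u_n)=P(Z(1)\ge(n+1)a)=q_{n+1}$. If $Z(\eta_n)\ge na$, then at the last jump time $\tau\le\eta_n$ one has $X(\tau)=Z(\tau)-b\tau\ge na-b\eta_n=na-b+2bq_n^{2}>u_n$, so $\{Z(\eta_n)\ge na\}\subseteq\{\sup_{0\le t\le1}X(t)>u_n\}$; moreover, splitting $Z(1)=Z(\eta_n)+(Z(1)-Z(\eta_n))$ into independent increments and using $P(Z(1)-Z(\eta_n)\ne0)\le 1-e^{-\lambda(1-\eta_n)}\le\lambda(1-\eta_n)=2\lambda q_n^{2}$, we get $P(Z(\eta_n)\ge na)\ge q_n-2\lambda q_n^{2}\ge q_n/2$ for large $n$. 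Hence the ratio at $u_n$ is at least $q_n/(2q_{n+1})$, and it remains to show $\liminf_n q_{n+1}/q_n=0$. This follows from (\ref{cond.pl.lat}): since $\log\!\big(P(X_1>(k+1)a)/P(X_1>ka)\big)\to-\infty$, Cesàro averaging gives $n^{-1}\log P(X_1>na)\to-\infty$, so $E e^{\theta X_1}<\infty$ for every $\theta>0$, hence $E e^{\theta Z(1)}=\exp\big(\lambda(E e^{\theta X_1}-1)\big)<\infty$, and a Chernoff bound gives $n^{-1}\log q_n\le-\theta a+o(1)$ for every $\theta$, i.e.\ $n^{-1}\log q_n\to-\infty$; a positive sequence with this property necessarily has $\liminf_n q_{n+1}/q_n=0$ (otherwise $\log(q_{n+1}/q_n)$ would be bounded below, while its Cesàro averages equal $n^{-1}\log(q_n/q_0)\to-\infty$). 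Evaluating the ratio at $u_n$ along a subsequence realising this liminf yields (\ref{limsup}).

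For $\liminf\le1$: take $u_n=na-b-\delta$ with $\delta\in(0,a)$ fixed (and $\delta<a-b$ when $b<a$); then $u_n+b=na-\delta\in((n-1)a,na)$, so $P(X(1)>u_n)=q_n$, and the claim reduces to $P\big(\sup_{0\le t\le1}X(t)>u_n,\ X(1)\le u_n\big)=o(q_n)$. On this event there is $\tau$ with $X(\tau)>u_n$, hence $Z(\tau)>u_n$ and so $Z(\tau)\ge(n-p)a$ with $p=\lfloor b/a\rfloor$; assuming for definiteness $X_1\ge a$ a.s.\ (when the jumps may be $\le0$, the monotonicity of $Z$ used below is replaced by a closer analysis of the excursion of $X$ above $u_n$), $Z$ is nondecreasing, so $X(1)\le u_n$ forces $Z(1)\le(n-1)a$ and $Z(\tau)\le Z(1)\le(n-1)a$, whence $b\tau<(n-1)a-u_n=b-a+\delta$; thus the crossing time is confined to $[0,w_n]$ with $w_n:=1-(a-\delta)/b$, which is $\le0$ (the bad event empty) when $b<a$, and lies in $(0,1)$ when $b\ge a$. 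Hence the bad event is contained in $\{Z(w_n)\ge(n-p)a,\ Z(1)\le(n-1)a\}$, on which essentially all of the jumps must fall in $[0,w_n]$, their number diverging because light-tailed jumps need unboundedly many summands to reach level $\approx na$; since $w_n$ is bounded away from $1$, this confinement carries a super-exponentially small factor which, combined with the comparatively slow decay of $q_n$ (one shows $-\log q_n=o(n\log n)$ using the ``few large jumps'' configurations favoured by (\ref{cond.pl.lat})), makes the whole probability $o(q_n)$; this gives the ratio $\to1$ along $u_n$. The hard part will be exactly this last estimate when $b>a$ — showing that passing from $Z$ on $[0,1]$ to $Z$ on the shorter interval $[0,w_n]$, at the cost of only the bounded drop $pa$ in the target level, is negligible relative to $q_n$ — together with the bookkeeping needed when the jumps are not nonnegative; for $b\le a$ the bad event is empty or forces the confining interval's Poisson intensity to $0$, and the estimate is elementary.
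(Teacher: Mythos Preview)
Your proposal divides naturally into two halves.

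\textbf{The limsup half is essentially correct.} Choosing $u_n$ just above the lattice point $na-b$ so that the denominator collapses to $q_{n+1}=P(Z(1)\ge(n+1)a)$ while the numerator stays comparable to $q_n$ is the right idea, and your Chernoff argument for $\liminf_n q_{n+1}/q_n=0$ is valid. The paper's route is shorter: Proposition~\ref{p:pl} shows that $Z(1)$ itself inherits condition~(\ref{cond.pl}) from $X_1$, giving the full limit $q_{n+1}/q_n\to0$ directly; this is exactly the ingredient the paper singles out (``to obtain formula (32) from \cite{bib:4} one should use Proposition~\ref{p:pl}'').

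\textbf{The liminf half has a genuine gap}, beyond the restriction to nonnegative jumps that you flag yourself. Your heuristic for bounding the ``bad'' event $\{\sup X>u_n,\ X(1)\le u_n\}$ is that reaching level $\approx na$ requires a divergent number of jumps, all of which must then be confined to $[0,w]$. This is false under the hypotheses: since $P(X_1>na)>0$ for every $n$, the single-jump scenario $\{N(1)=1,\ X_1=(n-1)a,\ \Gamma_1<w\}$ already lies in the bad event and has probability of order $p_{n-1}:=P(X_1=(n-1)a)$, with no confinement cost beyond the fixed factor $w$. More structurally, the containment you use,
\[
\{\text{bad}\}\subset\{Z(w)\ge(n-p)a,\ Z(1)\le(n-1)a\},
\]
cannot yield $o(q_n)$: by independence of increments, $q_n\ge P(Z(w)=ja)\,P\!\big(Z(1{-}w)\ge(n-j)a\big)$ for each $j\in\{n-p,\dots,n-1\}$, so summing gives only $P(\text{bad})\le C\,q_n$ with $C=\sum_{i=1}^{p}1/P(Z(1{-}w)\ge ia)$, a constant depending on $\lambda,w,p$ but not on $n$. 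Thus the last step --- which you correctly identify as ``the hard part'' --- is not merely unwritten; the mechanism you sketch does not work. A finer decomposition is needed, one that tracks the time of the last jump $\Gamma_\tau$ and exploits the light-tail property of $Z(1)$ term by term (this is where Proposition~\ref{p:pl}, or rather its role in the argument of \cite{bib:4}, enters in the paper's proof).
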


\begin{remark}
\label{r:discr}
{\rm One can obtain a lattice distribution by a "discretization". Namely, for a random variable $X$  
and a fixed $a>0$ put
$$
X^{(a)} = \sum_{n=-\infty}^\infty naI_{(na\le X<(n+1)a)}\,. 
$$
Assume now that the distribution of the  jumps $X_k$ satisfies (\ref{tail.X1})--(\ref{cond.h}).
 Denote by $X_k^{(a)}$ the discretizations of $X_k$, and by $Z^{(a)}(t)$ the corresponding compound Poisson
process given by (\ref{comp.P}). Let $b>0$. Then for the process $X(t)= Z(t)-bt$  we have (\ref{maineq}), while for the
process $X^{(a)}(t) = Z^{(a)}(t)-bt$ relations (\ref{limsup}) and (\ref{liminf}) hold. 
For example, it is true if the jumps
$X_k$ are normal.

The situation is different when the tail of jumps is "heavy", i.e. if
$$
\lim_{u\to\infty}\frac{P(X_1>u+a)}{P(X_1>u)} =1
$$
for any $a>0$. It is known that under this assumption (\ref{maineq}) holds for the process $X(t)$
(see   \cite{bib:10}). Because in this case the tail 
of "discretized" jumps  $X_k^{(a)}$
is also heavy, (\ref{maineq}) holds  for the process $X^{(a)}(t)=Z^{(a)}(t)-bt$ also.}
\end{remark}

Theorems \ref{t:main} and \ref{t:neg} show that sometimes the process $Z(t)-bt$ does not satisfy (\ref{sup.b}),
while for the process (\ref{def.X}) with $\sigma>0$ relation (\ref{maineq}) holds.  
Our last result states that a compound Poisson process $Z(t)$ may  satisfy (\ref{maineq}),
while for the process $X(t)= \sigma B(t)+Z(t)$ relation  (\ref{sup.b}) does not hold. Clearly, if the jumps
of $Z(t)$ are positive, then its  supremum over $[0,1]$ is $Z(1)$.

\begin{theorem}
\label{t:example}
There is  a compound Poisson process $Z(t)$ with positive jumps such that for the process (\ref{def.X}) with 
$\sigma>0$ and $b=0$
\beam
\label{limsup}
\limsup_{u\to\infty}\frac{P(\sup_{0\le t\le1}X(t)>u)}{P(X(1)>u)}>1
\eeam
and
\beam
\label{liminf}
\liminf_{u\to\infty}\frac{P(\sup_{0\le t\le1}X(t)>u)}{P(X(1)>u)}=1\,.
\eeam
\end{theorem}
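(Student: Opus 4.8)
\medskip
\noindent\emph{Proof plan.}
The idea is to choose $Z$ so that the tail $\overline{G}(u):=P(Z(1)>u)$ behaves like a staircase: asymptotically constant (``locally heavy'') on a sequence of enormously long intervals, and dropping by a factor tending to $0$ between consecutive ones. On the flat intervals, convolving with the $O(1)$ term $\sigma B(1)$ changes nothing, which forces the ratio in the theorem to $1$; just below a drop, however, the Brownian motion is asked only to bridge an $O(1)$ gap, and in that regime the elementary fact that $\sup_{0\le t\le1}X(t)>X(1)$ with positive probability survives and pushes the ratio above $1$. Concretely, take $Z$ compound Poisson of rate $\lambda=1$ with jump law $F=\sum_{k\ge1}p_k\,\delta_{a_k}$, where $0<a_1<a_2<\cdots$ grow extremely fast and $p_k>0$, $\sum_k p_k=1$, decay slowly relative to the $a_k$ — say $a_k=2^{2^k}$ and $p_k\propto 1/k!$, so that $P(X_1\ge a_k)\sim p_k$. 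A one--big--jump estimate then gives, uniformly for $a_{k-1}^{3/2}\le u<a_k$,
\[
\overline{G}(u)\ \sim\ 1-e^{-\lambda P(X_1\ge a_k)}\ \sim\ \lambda P(X_1\ge a_k)\ \sim\ \lambda p_k ,
\]
because exceeding such a $u$ without a jump of size $\ge a_k$ would require more than $a_{k-1}^{1/2}$ jumps, an event of probability $o(p_k)$ by the rapid growth of $(a_k)$; across $a_k$ the tail then drops by $P(X_1\ge a_{k+1})/P(X_1\ge a_k)\to0$. I also use that, the ``big'' jumps (those of size $\ge a_k$) forming a Poisson process of vanishing intensity, removing them perturbs $X$ by an event of probability $o(1)$, so that for any fixed--threshold event the small--jump process $Y(t):=\sigma B(t)+\sum_{j\le N(t),\,X_j<a_k}X_j$ may be replaced by $X$ in the limit.

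For the $\liminf$, take $u_k=\lfloor a_k^{9/10}\rfloor$, deep inside the $k$-th flat range (so $u_k\to\infty$, $a_{k-1}^{3/2}=o(u_k)$, $u_k=o(a_k)$). Since $X(0)=0$ and $Z$ is nondecreasing,
\[
P(X(1)>u_k)\ \le\ P\Bigl(\sup_{0\le t\le1}X(t)>u_k\Bigr)\ \le\ P\Bigl(Z(1)+\sup_{0\le t\le1}\sigma B(t)>u_k\Bigr),
\]
and since $\sigma B(1)$ and $\sup_{[0,1]}\sigma B$ are $O(1)$, $\overline{G}$ is asymptotically constant on a window about $u_k$ of width $\to\infty$, and $P(\sup_{[0,1]}\sigma B>u_k/2)=o(p_k)$, both outer probabilities are $\sim\overline{G}(u_k)\sim\lambda p_k$. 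Hence the ratio tends to $1$ along $(u_k)$, and since it is always $\ge1$, its $\liminf$ equals $1$.

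For the $\limsup$, fix $s>0$ and take $u_k'=a_k-s$. Since a big jump has vanishing probability and, given one occurs, equals $a_k$ up to an event of probability $o(p_k)$, and since $\sup_{0\le t<\tau}Y(t)$ cannot reach $a_k-s$ (again by sparsity of the atoms), both probabilities at $u_k'$ are carried by the event ``exactly one big jump, of size $a_k$, at a time $\tau$ uniform on $[0,1]$''. On this event $X(t)=Y(t)+a_k\,{\mathbf 1}(t\ge\tau)$, so the strong Markov property at $\tau$ gives
\[
\sup_{0\le t\le1}X(t)=\max\Bigl(\sup_{0\le t<\tau}Y(t),\ a_k+Y(\tau)+\sup_{0\le r\le1-\tau}\widetilde{Y}(r)\Bigr),\qquad X(1)=a_k+Y(1),
\]
with $\widetilde Y$ an independent copy of $Y$; dropping the negligible first term and replacing $Y$ by $X$ yields
\[
P(X(1)>a_k-s)\sim\lambda p_k\,P(X(1)>-s),\qquad
P\Bigl(\sup_{0\le t\le1}X(t)>a_k-s\Bigr)\sim\lambda p_k\int_0^1 P\Bigl(X(\tau)+\sup_{0\le r\le1-\tau}\widetilde X(r)>-s\Bigr)d\tau .
\]
For each $\tau$, $X(\tau)+\sup_{0\le r\le1-\tau}\widetilde X(r)$ stochastically dominates $X(\tau)+\widetilde X(1-\tau)\eqd X(1)$, and strictly so (with positive probability an $X$-path lies above $-s$ somewhere on $(\tau,1]$ while $X(1)\le-s$). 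Hence along $(u_k')$ the ratio converges to $\bigl(\int_0^1 P(X(\tau)+\sup_{0\le r\le1-\tau}\widetilde X(r)>-s)\,d\tau\bigr)\big/P(X(1)>-s)>1$, so the $\limsup$ exceeds $1$.

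The main obstacle is converting these heuristics into the uniform estimates indicated, all of which hinge on the quantitative choice of $(a_k)$ and $(p_k)$. The uniform one--big--jump asymptotic for $\overline{G}$ on the full range $a_{k-1}^{3/2}\le u<a_k$ is exactly where the super-fast growth of $(a_k)$ against $(p_k)$ is used. The genuinely delicate point, however, is the upper bound for $P(\sup_{0\le t\le1}X(t)>u_k')$: after splitting the supremum over the (random) inter-jump intervals one must verify that configurations with two or more big jumps, and those in which a large intermediate partial sum $S_j$ ($j<N(1)$) combines with a Brownian excursion over the corresponding interval $(\tau_j,\tau_{j+1})$ between consecutive jumps, all contribute $o(p_k)$ — once more the sparsity of the atoms leaves enough room. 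Granting this, the decomposition of $\sup_{0\le t\le1}X(t)$ above, the passage from $Y$ to $X$ at the threshold $-s$, and the two displayed asymptotics follow from the strong Markov property and Poisson thinning, which completes the proof.
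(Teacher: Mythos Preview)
Your construction and approach are correct and genuinely different from the paper's. The paper takes atoms at $n!$ with masses $P(X_1=n!)=\frac{1}{(e-1)n!}$ and proceeds by direct computation: it expands both $P(X(1)>u)$ and $P(\sup X>u)$ as compound-Poisson series $\sum_k\frac{\lambda^k}{k!}\,P(S_k+B(1)>u)$ and estimates each term by an explicit case analysis on how many of the $X_j$ lie at or above $(n-1)!$ or $n!$; the $\limsup$ is obtained along $u_n=n!$ via the lower bound $P(\sup X>n!)\ge P(X(1)>n!)+P(X(\Gamma_{\tau-1})>n!,\,X(1)\le n!)$, and the $\liminf$ along $u_n=n\cdot n!$ via $\sup X\le Z(1)+\sigma|B(1)|$. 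Your Poisson-thinning argument is more structural: separating the big-jump process from $Y^{(k)}$ and conditioning on exactly one big jump at a uniform time $\tau$ reduces the ratio along $u_k'=a_k-s$ to the fixed-level quantity
\[
\frac{\int_0^1 P\bigl(X(\tau)+\sup_{0\le r\le 1-\tau}\widetilde X(r)>-s\bigr)\,d\tau}{P(X(1)>-s)},
\]
whose strict excess over $1$ is immediate from $\sup_{[0,1-\tau]}\widetilde X\ge\widetilde X(1-\tau)$ together with the Brownian component. This is cleaner and essentially insensitive to the precise choice of $(a_k),(p_k)$, whereas the paper's route is more hands-on but fully explicit and avoids any process-level limit $Y^{(k)}\Rightarrow X$. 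One minor point: you flag the \emph{upper} bound for $P(\sup_{0\le t\le1}X(t)>u_k')$ as the delicate step, but for $\limsup>1$ you only need a \emph{lower} bound on the numerator (plus the two-sided asymptotic for the denominator, which is the easy one-big-jump estimate); the worry about Brownian excursions over intermediate inter-jump intervals is therefore unnecessary here and can be dropped.
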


\section{Auxiliary statements}
\label{S:lem}

Here we prove some statements that are used later . 
\begin{lemma}
\label{l:comp}
Let $Z$ and $W$ be random variables,  $P(Z>u)>0$ and $P(W>u)>0$ for all positive $u$, and one of the following
conditions holds:
\beam
\label{eq}
\lim_{u\to\infty}\frac{P(Z>u)}{P(W>u)}=1\,,
\eeam
or 
\beam
\label{neg}
\lim_{u\to\infty}\frac{P(Z>u)}{P(W>u)}=0\,.
\eeam
Let a random variable $Y$ satisfy  condition (\ref{cond.pl}).
If $Y$ is independent of $Z$ and $W$, then
$$
\lim_{u\to\infty}\frac{P(Y+Z>u)}{P(Y+W>u)}=1\,
$$
if (\ref{eq}) holds, and
$$
\lim_{u\to\infty}\frac{P(Y+Z>u)}{P(Y+W>u)}=0\,
$$
if (\ref{neg}) holds. 
\end{lemma}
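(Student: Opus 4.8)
The plan is to work directly with the convolution formula for $P(Y+Z>u)$, splitting the range of the $Y$-variable at a slowly growing threshold and using condition (\ref{cond.pl}) to control the contribution of large values of $Y$. First I would fix $\varepsilon>0$ and, using (\ref{eq}) (the case (\ref{neg}) is handled the same way with $1$ replaced by $0$ throughout), choose $u_0$ so that $(1-\varepsilon)P(W>v)\le P(Z>v)\le(1+\varepsilon)P(W>v)$ for all $v\ge u_0$. Condition (\ref{cond.pl}) gives, for the fixed $a>0$ appearing there, that $P(Y>v+a)=o(P(Y>v))$, equivalently $P(Y>v+a)/P(Y>v)\to 0$; I will want the consequence that for any fixed $m\in\mathbf N$, $P(Y>v+ma)\le C_m\, P(Y>v)$ with $C_m\to 0$ as $m\to\infty$, and more usefully that the ``upper tail mass'' of $Y$ beyond level $u-u_0$ is negligible compared with $P(Y>u-a)$, since the latter already dominates by the subconvolutive bound $P(Y+W>u)\ge P(Y>u-a)P(W>a)$.

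The key decomposition is
\[
P(Y+Z>u)=\int P(Z>u-y)\,F_Y(dy)
=\Bigl(\int_{-\infty}^{K}+\int_{K}^{u-u_0}+\int_{u-u_0}^{\infty}\Bigr)P(Z>u-y)\,F_Y(dy),
\]
where $K$ is a fixed large constant chosen after $\varepsilon$. On the middle range $y\le u-u_0$ we have $u-y\ge u_0$, so $P(Z>u-y)$ is sandwiched between $(1\mp\varepsilon)P(W>u-y)$, and the same bound lifts to the integral; the same middle-range integral for $W$ in place of $Z$ differs from $P(Y+W>u)$ only by the two outer pieces. On the left range $y\le K$ the integrand is at most $P(Z>u-K)\le C\,P(W>u-K)$, and one shows $P(W>u-K)=O(P(Y+W>u))$ uniformly in $u$ by again invoking (\ref{cond.pl}) for $W$'s partner $Y$: iterating $P(Y>v+a)\le \delta P(Y>v)$ gives $P(W>u-K)\le P(W>u-K)$ and, comparing with $P(Y>u-a)P(W>a)$, this piece is $o(1)$ relative to $P(Y+W>u)$ after first sending $u\to\infty$ and then $K\to\infty$. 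The right range $y>u-u_0$ forces $Z>u-y$ to be a bounded-level event, so the integrand is $\le 1$ and the piece is at most $P(Y>u-u_0)$; since $u_0$ is fixed, finitely many applications of (\ref{cond.pl}) show $P(Y>u-u_0)\le C\,P(Y>u-a)\cdot\bigl(P(Y>u-u_0)/P(Y>u-a)\bigr)$, and because $P(Y>u-a)P(W>a)\le P(Y+W>u)$ while $P(Y>u-u_0)/P(Y>u-a)\to 0$ when $u_0>a$ (splitting the gap into unit-$a$ steps), this piece too is negligible relative to $P(Y+W>u)$.

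Assembling the three estimates yields $(1-\varepsilon-\eta(u,K))\,P(Y+W>u)\le P(Y+Z>u)\le (1+\varepsilon+\eta(u,K))\,P(Y+W>u)$ where $\limsup_{u\to\infty}\eta(u,K)\to 0$ as $K\to\infty$; letting $u\to\infty$, then $K\to\infty$, then $\varepsilon\to 0$ gives the first claim. For (\ref{neg}) the identical splitting works: the middle piece is now $o(P(Y+W>u))$ directly, and the two outer pieces were already shown to be $o(P(Y+W>u))$ without using (\ref{eq}) at all. The main obstacle is the bookkeeping in the outer ranges — specifically, verifying that $P(W>u-K)$ and $P(Y>u-u_0)$ are genuinely controlled by $P(Y+W>u)$; this rests entirely on the subconvolutive lower bound $P(Y+W>u)\ge P(Y>u-a)P(W>a)$ together with the fact that (\ref{cond.pl}) makes $P(Y>\cdot)$ decay geometrically along any arithmetic progression with step $a$, so shifting the argument by a fixed amount costs only a constant factor that can be absorbed, while shifting by an amount tending to infinity produces a vanishing factor.
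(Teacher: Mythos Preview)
Your strategy---split the convolution integral, sandwich $P(Z>u-y)$ by $(1\pm\varepsilon)P(W>u-y)$ on the main range, and kill the remaining tail $P(Y>u-u_0)$ using (\ref{cond.pl}) together with a subconvolutive lower bound for $P(Y+W>u)$---is exactly the paper's approach. But two things go wrong in your execution.

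First, the three-way split is unnecessary. For $y\le K$ and $u\ge K+u_0$ you still have $u-y\ge u_0$, so the sandwich applies on $(-\infty,K]$ just as well as on $(K,u-u_0]$. The paper therefore splits only once, at $u-u_0$:
\[
P(Y+Z>u)\le (1+\varepsilon)\int_{-\infty}^{u-u_0}P(W>u-t)\,F_Y(dt)+P(Y>u-u_0)\le (1+\varepsilon)P(Y+W>u)+P(Y>u-u_0).
\]
Your paragraph on the left piece (``$P(W>u-K)=o(P(Y+W>u))$'') is muddled and in any case not needed: that piece is not negligible, it is part of the main integral.

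Second, and this is a genuine error, your right-tail estimate has the inequality backwards. You compare $P(Y>u-u_0)$ with $P(Y>u-a)$ and claim the ratio tends to $0$ when $u_0>a$; but $u_0>a$ gives $u-u_0<u-a$, hence $P(Y>u-u_0)\ge P(Y>u-a)$ and the ratio is $\ge 1$. The correct move (which the paper makes) is to shift the lower bound the other way:
\[
P(Y+W>u)\ge P(Y>u-u_0-a)\,P(W>u_0+a),
\]
after which $P(Y>u-u_0)/P(Y>u-u_0-a)\to 0$ by (\ref{cond.pl}). This yields $\limsup P(Y+Z>u)/P(Y+W>u)\le 1+\varepsilon$. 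The matching lower bound is then obtained for free by swapping the roles of $Z$ and $W$---a symmetry you do not invoke but which replaces your attempt to manufacture two-sided bounds directly. The case (\ref{neg}) follows from the same two-piece split.
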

\begin{proof}
If (\ref{eq}) holds, then
for a fixed $\ep>0$ we can find  $u_0>0$ such that 
$$
P(Z>u)\le (1+\ep)P(W>u)
$$
 for all $u\ge u_0$. Hence
$$
P(Y+Z>u) \le (1+\ep)\int_{-\infty}^{u-u_0}P(W>u-t)F_Y(dt) + P(Y>u-u_0)
$$
$$
\le (1+\ep)P(Y+W>u) + P(Y>u-u_0)\,.
$$
We also have $P(Y+W>u) \ge P(Y>u-u_0-a)P(W>u_0+a)$.
From here and (\ref{cond.pl})
$$
\limsup_{u\to\infty}\frac{P(Y+Z>u)}{P(Y+W>u)}\le(1+\ep)\,.
$$
But by the same way
$$
\limsup_{u\to\infty}\frac{P(Y+W>u)}{P(Y+Z>u)}\le(1+\ep)\,.
$$
Letting $\ep\to 0$ we get the first needed relation. The second one can be obtained similarly. 
\end{proof}

It is known that for compound Poisson process  with light tail relation (\ref{sup.b}) holds with $a=1$
(see Theorem 1 from \cite{bib:4}). Because the random variable $Y=B(1)$ satisfies (\ref{cond.pl}),
 we come to the following statement.

\begin{corollary}
\label{c:equiv}
If the jumps of a compound Poisson process $Z(t)$ have a light tail, and $B(1)$ is independent
of this process, then
\beam
\label{equiv}
\lim_{u\to\infty}\frac{P(B(1)+\sup_{0\le t\le1}Z(t)>u)}{P(B(1)+Z(1)>u)}=1\,.
\eeam
\end{corollary}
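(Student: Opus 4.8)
The plan is to apply Lemma \ref{l:comp} with $Y = B(1)$, $Z = \sup_{0\le t\le 1}Z(t)$ (here I temporarily overload notation; call the compound Poisson process $\zeta(t)$ to avoid clash) and $W = \zeta(1)$, using the equality case (\ref{eq}). The hypotheses of the lemma that need to be verified are: (i) $P(\sup_{0\le t\le1}\zeta(t)>u)>0$ and $P(\zeta(1)>u)>0$ for all $u>0$; (ii) $Y=B(1)$ satisfies condition (\ref{cond.pl}); and (iii) the ratio $P(\sup_{0\le t\le1}\zeta(t)>u)/P(\zeta(1)>u)\to 1$, i.e.\ (\ref{sup.b}) holds with $a=1$ for the compound Poisson process $\zeta$.

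First I would dispose of (iii): this is exactly Theorem 1 of \cite{bib:4}, invoked for the compound Poisson process $\zeta(t)$ whose jumps have a light tail by assumption; note that a compound Poisson process has $E\exp(c|\zeta(t)|) = \infty$ in general is irrelevant here since the cited theorem is stated directly for compound Poisson processes with light-tailed jumps. Next, (ii): for a standard normal random variable $Y=B(1)$ we have, by the classical Mills-ratio asymptotics $P(Y>u)\sim (u\sqrt{2\pi})^{-1}e^{-u^2/2}$, that
\[
\frac{P(Y>u+a)}{P(Y>u)} \sim \frac{u}{u+a}\,e^{-au - a^2/2}\longrightarrow 0
\]
as $u\to\infty$ for any $a>0$, so (\ref{cond.pl}) holds; more simply, the quoted remark after (\ref{cond.inf}) already records that a normal variable satisfies (\ref{cond.pl}) via the representation (\ref{tail.X1})--(\ref{cond.inf}) with $h(v)=v$. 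Then (i) is immediate: $P(\zeta(1)>u)>0$ for all $u$ because the jump distribution has unbounded support on the right (the light-tail hypotheses (\ref{light1}) or the positivity part of (\ref{light2}) force $P(X_1>u)>0$ arbitrarily far out, and with positive probability $N(1)\ge k$ for any $k$, so $S_{N(1)}$ can exceed any level), and $P(\sup_{0\le t\le1}\zeta(t)>u)\ge P(\zeta(1)>u)>0$.

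With (i)--(iii) in hand, Lemma \ref{l:comp} applied in its case (\ref{eq}) gives directly
\[
\lim_{u\to\infty}\frac{P\bigl(B(1)+\sup_{0\le t\le1}\zeta(t)>u\bigr)}{P\bigl(B(1)+\zeta(1)>u\bigr)}=1\,,
\]
which is (\ref{equiv}). The only genuine content here is the bookkeeping of hypotheses; the main (very mild) obstacle is making sure the independence structure lines up so that $Y=B(1)$ is independent of \emph{both} $\sup_{0\le t\le1}\zeta(t)$ and $\zeta(1)$ — which it is, since $B$ and $\zeta$ are independent processes and both random variables in question are measurable functions of the path of $\zeta$ on $[0,1]$. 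No new estimates are needed beyond the cited results. $\hfill\qed$
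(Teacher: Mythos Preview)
Your proof is correct and follows exactly the paper's approach: invoke Theorem~1 of \cite{bib:4} to get $P(\sup_{0\le t\le1}Z(t)>u)\sim P(Z(1)>u)$, note that $Y=B(1)$ satisfies (\ref{cond.pl}), and apply Lemma~\ref{l:comp} in its case (\ref{eq}). One small slip in your hypothesis check (i): in case (\ref{light2}) the jump distribution is bounded above, so $P(X_1>u)>0$ does \emph{not} hold arbitrarily far out; the correct reason $P(Z(1)>u)>0$ for all $u$ there is simply that $P(X_1>\alpha)>0$ and $N(1)$ can be arbitrarily large, which you in fact also say.
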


\begin{lemma}
\label{l:Y}
Let random variables $\{X_k\}_{k=1}^\infty$ and $Y$ be independent. Assume that  the tail of $X_k$ is light  and 
$Y$ satisfies (\ref{cond.pl}). Assume also that $a\le\alpha$ in the case (\ref{light2}).  Then
$$
\lim_{u\to\infty}\frac{P(Y+S_k>u)}{P(Y+S_{k+1}>u)}= 0
$$
for all $k=1,2,\dots$.
\end{lemma}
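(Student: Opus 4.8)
The plan is to reduce the statement about $S_k$ versus $S_{k+1}$ to the single-summand statement already available, namely Lemma \ref{l:comp}. The key observation is that $S_{k+1}=S_k+X_{k+1}$, where $X_{k+1}$ is independent of $S_k$ and has a light tail, and that $X_{k+1}$ itself satisfies a condition of type (\ref{cond.pl}) in each of the two cases (\ref{light1}), (\ref{light2}) of the definition of light tail. Indeed, if $X_{k+1}$ satisfies (\ref{light2}) with the extra hypothesis $a\le\alpha$, then $X_{k+1}\le A$ a.s.\ with $P(X_{k+1}>\alpha)>0$, and one has $P(X_{k+1}>u)=0$ for $u\ge A$, so the ratio in (\ref{cond.pl}) is eventually $0$; in the case (\ref{light1}) one must first check that $X$ satisfying (\ref{light1}) also satisfies (\ref{cond.pl}) for some $a>0$. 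I expect this last point to be the main technical obstacle, since (\ref{light1}) is a statement about the convolution $X_1+X_2$ rather than about a shift $X+a$, and passing from one to the other requires an argument (for instance, splitting $\{X_1+X_2>u\}$ according to whether $X_2$ exceeds a fixed level $a$, using $P(X_1+X_2>u)\le P(X_1>u-a)+P(X_1>a,\,X_2>a)\le P(X_1>u-a)+\cdots$ and comparing with (\ref{light1}), or invoking the characterization of light tails from \cite{bib:4}, Lemma 2).

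Granting that $X_{k+1}$ satisfies (\ref{cond.pl}) with some constant $a'>0$, the argument proceeds by induction on $k$. For the base case $k=1$: by the definition of light tail applied to $X_1$ and $X_2$ (in the form (\ref{light1}), or trivially in the form (\ref{light2}) since then $P(X_1+X_2>u)\ge P(X_1>u-\alpha)P(X_2>\alpha)$ while $P(X_1>u)=0$ for $u>A$), we have
\[
\lim_{u\to\infty}\frac{P(X_1>u)}{P(X_1+X_2>u)}=0,
\]
that is, (\ref{neg}) holds with $Z=X_1=S_1$ and $W=S_2$. Now apply Lemma \ref{l:comp} with this $Z$, $W$ and with the independent random variable $Y$ (which satisfies (\ref{cond.pl}) by hypothesis): the conclusion is
\[
\lim_{u\to\infty}\frac{P(Y+S_1>u)}{P(Y+S_2>u)}=0,
\]
which is the claim for $k=1$.

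For the inductive step, suppose the claim holds for $k-1$, i.e.\ $P(Y+S_{k-1}>u)=o\bigl(P(Y+S_k>u)\bigr)$. Write $Y':=Y+X_k$; then $Y'$ is independent of $\{X_{k+1},X_{k+2},\dots\}$. The point is that $Y'$ again satisfies (\ref{cond.pl}): this follows from Lemma \ref{l:comp} applied with $Z=S_{k-1}$, $W=S_k$ — wait, more directly, $P(Y'>u+a)=P(Y+X_k>u+a)$ and one estimates this against $P(Y+X_k>u)$ using that $X_k$ has a light tail and $Y$ satisfies (\ref{cond.pl}); concretely $P(Y+X_k>u+a)\le P(Y>(u+a)/2)+P(X_k>(u+a)/2)$-type splitting combined with (\ref{cond.pl}) for $Y$ and for $X_k$ shows the ratio tends to $0$. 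Hence, re-indexing, the original single-summand statement for the pair $(S_1,S_2)$ and the independent variable $Y'$ gives
\[
\lim_{u\to\infty}\frac{P(Y'+S_1>u)}{P(Y'+S_2>u)}
=\lim_{u\to\infty}\frac{P(Y+X_k+X_{k+1}>u)}{P(Y+X_k+X_{k+1}+X_{k+2}>u)}=0,
\]
and by exchangeability of the iid sequence $\{X_j\}$ this is exactly $P(Y+S_{k+1}>u)=o\bigl(P(Y+S_{k+2}>u)\bigr)$; shifting the index back completes the induction. (Alternatively one avoids induction entirely: fix $k$, set $Y'=Y+S_{k-1}$, note $Y'$ inherits (\ref{cond.pl}) — again the crux — and apply Lemma \ref{l:comp} to $Z=X_k=S_1\circ\text{shift}$, but the cleanest route is the one-step reduction $S_{k+1}=S_k+X_{k+1}$ with $Y$ absorbing $S_k$: take $\tilde Y=Y+S_k$, check $\tilde Y$ satisfies (\ref{cond.pl}), and then $P(\tilde Y>u)/P(\tilde Y+X_{k+1}>u)\to 0$ is immediate from the light-tail property of $X_{k+1}$ together with Lemma \ref{l:comp}.) The main obstacle throughout is the repeated verification that sums of the form $Y+S_j$ still satisfy (\ref{cond.pl}); once that closure property is established, the rest is a direct application of Lemma \ref{l:comp}.
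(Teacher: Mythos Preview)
Your proposal has a genuine gap in case (\ref{light1}). The closure property you rely on --- that $X$ (or $\tilde Y=Y+S_k$) inherits condition (\ref{cond.pl}) --- is \emph{false}. Take $X$ exponential with rate $\alpha>0$: then $P(X_1+X_2>u)\sim \alpha u\,e^{-\alpha u}$, so $P(X_1>u)/P(X_1+X_2>u)\sim 1/(\alpha u)\to 0$ and (\ref{light1}) holds; but $P(X>u+a)/P(X>u)=e^{-\alpha a}\not\to 0$, so (\ref{cond.pl}) fails. Worse, if $Y$ satisfies (\ref{cond.pl}) (say $Y$ normal), then $Ee^{\alpha Y}<\infty$ and $P(Y+X>u)\sim Ce^{-\alpha u}$, so $\tilde Y=Y+X$ also fails (\ref{cond.pl}). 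Hence neither version of your inductive reduction goes through, and the sketched ``splitting'' argument cannot rescue it: (\ref{light1}) simply does not force (\ref{cond.pl}).

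The paper avoids this entirely. In case (\ref{light1}) it quotes Lemma~4 of \cite{bib:4}, which gives directly
\[
\lim_{u\to\infty}\frac{P(S_k>u)}{P(S_{k+1}>u)}=0,
\]
and then applies Lemma~\ref{l:comp} \emph{once}, with $Z=S_k$, $W=S_{k+1}$ and the original $Y$ (which does satisfy (\ref{cond.pl}) by hypothesis). No induction, no closure claim. In case (\ref{light2}) your route is also blocked, since Lemma~\ref{l:comp} requires $P(Z>u)>0$ and $P(W>u)>0$ for all $u>0$, which fails when $X\le A$ a.s.; the paper instead writes $P(Y+S_k>u)$ and $P(Y+S_{k+1}>u)$ as integrals over $F_{S_k}$ (supported in $(-\infty,Ak]$) and compares the integrands $P(Y>u-t)$ and $P(Y+X_1>u-t)\ge P(Y>u-t-a)P(X_1>a)$ pointwise, using (\ref{cond.pl}) for $Y$ on the range $u-t>u-Ak\to\infty$.
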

\begin{proof}
Fist we consider  case (\ref{light1}). Then, according to Lemma 4 from \cite{bib:4} 
\beam
\label{Sk.0}
\lim_{u\to\infty}\frac{P(S_k>u)}{P(S_{k+1}>u)}= 0\,
\eeam
and Lemma \ref{l:comp} leads to the needed conclusion.

Turn to  case (\ref{light2}). Then $S_k\le Ak$ and 
$$
P(Y+S_k>u) =\int_{-\infty}^{Ak}P(Y>u-t)F_{S_k}(dt)\,,
$$
$$
P(Y+S_{k+1}>u) =\int_{-\infty}^{Ak}P(Y+X_1>u-t)F_{S_k}(dt).
$$
We have $P(Y+X_1>u-t)\ge P(Y>u-t-a)P(X_1>a)$, and $P(X_1>a)>0$ because $a\le\alpha$.
It follows from this estimate and (\ref{cond.pl}) that for a fixed $\ep>0$ there is $u_0>0$ such that if $u-t>u_0$, 
then
$$
\frac{P(Y>u-t)}{P(Y+X_1>u-t)}\le \ep\,.
$$
But in the last integrals $t\le Ak<u-u_0$, i.e. $u-t>u_0$ for $u$ large enough. For such $u$ 
$$
\frac{P(Y+S_k>u)}{P(Y+S_{k+1}>u)} \le \ep\,.
$$
Letting $u\to\infty$ and then $\ep\to0$ we obtain the lemma.
\end{proof}

The next  statement plays an important role in the proof of Theorem \ref{t:main}.

\begin{lemma}
\label{l:sym}
Assume random variables $X$ and $Y$ are independent and $Y$ is symmetric. Then 
$$
P(X+|Y|>u) = 2P(X+Y>u) - P(X>u+|Y|)\,.
$$
for all $u>0$\,.
\end{lemma}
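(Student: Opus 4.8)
The plan is to condition on the value of $Y$ and exploit its symmetry. First I would observe that, since $X$ and $Y$ are independent, we may write, for each fixed $u>0$,
\[
P(X+|Y|>u) = \int_{-\infty}^\infty P(X>u-|y|)\,F_Y(dy),
\]
and similarly $P(X+Y>u) = \int_{-\infty}^\infty P(X>u-y)\,F_Y(dy)$. Because $Y$ is symmetric, in the latter integral I can replace $y$ by $-y$ and average the two expressions, obtaining
\[
P(X+Y>u) = \tfrac12\int_{-\infty}^\infty \bigl[P(X>u-y)+P(X>u+y)\bigr]\,F_Y(dy)
          = \tfrac12\int_{-\infty}^\infty \bigl[P(X>u-|y|)+P(X>u+|y|)\bigr]\,F_Y(dy),
\]
where the last equality uses that $\{P(X>u-y)+P(X>u+y)\}$ is an even function of $y$.

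Next I would multiply by $2$ and split off the first term: $2P(X+Y>u) = P(X+|Y|>u) + \int_{-\infty}^\infty P(X>u+|y|)\,F_Y(dy)$. The remaining integral is exactly $P(X+|Y|^{-}>u)$ in the sense $\int P(X>u+|y|)F_Y(dy) = P(X - |Y| > u)$; but it is more transparent to recognize it directly as $P(X>u+|Y|)$, interpreting the latter as the probability of the event $\{X>u+|Y|\}$ for the independent pair $(X,Y)$, which by conditioning on $Y$ equals precisely that integral. Rearranging gives
\[
P(X+|Y|>u) = 2P(X+Y>u) - P(X>u+|Y|),
\]
as claimed.

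I do not anticipate a genuine obstacle here; the only point requiring a little care is the symmetrization step, namely justifying the substitution $y\mapsto -y$ under the integral sign and checking that the integrand rearranges into an even function of $y$ so that $|y|$ may be inserted. This is routine once one writes everything as Lebesgue--Stieltjes integrals against $F_Y$ and uses that $Y\eqd -Y$. One should also note the identity holds for every $u>0$ without any integrability or tail assumptions on $X$ or $Y$, since all quantities involved are probabilities in $[0,1]$ and the manipulations are term-by-term.
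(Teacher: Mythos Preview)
Your proof is correct and, if anything, cleaner than the paper's. The paper proceeds by an event-based decomposition: it writes
\[
P(X+|Y|>u)=P(X+Y>u)+P(X+Y\le u,\,X+|Y|>u),
\]
then splits the second term according to whether $X>u$ or $X\le u$, applies symmetry of $Y$ to each piece separately, and after some case-by-case manipulation identifies the remainder as $-P(X>u,\,|Y|<X-u)=-P(X>u+|Y|)$. Your argument, by contrast, conditions on $Y$ from the outset, symmetrizes the integral $\int P(X>u-y)F_Y(dy)$ in one stroke via $Y\eqd -Y$, and reads off the identity directly from the even decomposition $P(X>u-|y|)+P(X>u+|y|)$. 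Both routes are elementary, but yours avoids the somewhat intricate bookkeeping of overlapping events and makes the role of symmetry more transparent; the paper's approach has the minor advantage of never writing down an integral, working purely with probabilities of events.
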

\begin{proof}  
We have
$$
P(X+|Y|>u) = P(X+Y>u) + P(X+Y\le u\,,\, X+|Y|>u) = P(X+Y>u)
$$
$$
+ P(X>u\,,\, X+Y\le u) +  P(X\le u\,,\,X+Y\le u\,,\, X+|Y|>u)\,.
$$
Because of symmetry and independence
$$
P(X>u\,,\, X+Y\le u) = P(X>u\,,\, Y\le u-X) = P(X>u\,,\, Y\ge X-u) \,.
$$
By the same reasons
$$
P(X\le u\,,\,X+Y\le u\,,\, X+|Y|>u) = P(X\le u\,,\,Y\le u-X\,,\, |Y|>u-X)
$$
$$
= P(X\le u\,,\,Y\le u-X\,,\, -Y>u-X)= P(X\le u\,,\, Y>u-X) 
$$
$$
= P(X\le u\,,\,X+Y>u) = P(X+Y>u) - P(X> u\,,\,X+Y>u)\,.
$$
Inserting the last two relation in the first one we get
$$
P(X+|Y|>u) = 2P(X+Y>u) + P(X>u\,,\, Y\ge X-u) -P(X> u\,,\,Y>u-X)
$$
$$
= 2P(X+Y>u) -P(X>u\,,\,u-X <Y< X-u)\,.
$$
Since the last probability is
$$
P(X>u,\, |Y|< X-u) = P(X>u+|Y|)\,,
$$
the lemma follows.
\end{proof}

The following lemma will allow us to reduce the proofs of Theorems \ref{t:main} and \ref{t:pos}
to the case of processes of type  (\ref{def.X}).

\begin{lemma}
\label{l:gen}
Let 
\beam
\label{rep.sub}
X(t) = X_1(t) +X_2(t)\,,
\eeam
where L\'evy processes $X_1(t)$ and $X_2(t)$ are independent, $X_2(t)$ is a subordinator
with  L\'evy measure $\rho_2$ such that $\rho_2((a_2,\infty))=0$, where $a_2>0$ is a constant.
Assume that $\rho_1((a_1,\infty))>0$ for $a_1>a_2$, where  $\rho_1$ is the L\'evy measure of $X_1(t)$.
Assume also that $X_1(t)$ satisfies (\ref{maineq}). Then this relation holds for the process $X(t)$.  
\end{lemma}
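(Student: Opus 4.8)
The plan is to compare the process $X(t)=X_1(t)+X_2(t)$ with $X_1(t)$ using the fact that $X_2$ is a bounded-jump subordinator, so its contribution over $[0,1]$ is ``small'' in the sense of condition (\ref{cond.pl}) relative to the right tail governed by $X_1$. First I would note that since $\rho_2((a_2,\infty))=0$, each jump of $X_2$ is at most $a_2$, and $X_2(1)\ge X_2(t)$ need not hold, but $X_2$ is nondecreasing, so $\sup_{0\le t\le1}X_2(t)=X_2(1)$; moreover $X_2(1)$ is a.s.\ finite and in fact has a light (indeed, exponentially bounded) right tail, because a subordinator whose jumps are bounded by $a_2$ satisfies $E\exp(cX_2(1))<\infty$ for every $c>0$. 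So set $Y:=X_2(1)$; this $Y$ is independent of $X_1(\cdot)$ and satisfies (\ref{cond.pl}) — here I would invoke that (\ref{tail.X1})--(\ref{cond.inf}) type behaviour, or simply an exponential moment bound, forces $P(Y>u+a)/P(Y>u)\to0$; in fact any random variable with $E e^{cY}<\infty$ for all $c$ and with positive tail satisfies (\ref{cond.pl}) after a short Markov-inequality argument.

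Next I would sandwich the supremum. On one side,
\begin{eqnarray*}
\Bigl\{\sup_{0\le t\le1}X(t)>u\Bigr\}\subseteq\Bigl\{\sup_{0\le t\le1}X_1(t)+X_2(1)>u\Bigr\},
\end{eqnarray*}
since $X_2$ is nondecreasing and so $X(t)=X_1(t)+X_2(t)\le X_1(t)+X_2(1)\le\sup_s X_1(s)+X_2(1)$; on the other side, taking $t=1$ gives the trivial lower bound $\{X(1)>u\}=\{X_1(1)+X_2(1)>u\}\subseteq\{\sup_t X(t)>u\}$. Thus, writing $M_1:=\sup_{0\le t\le1}X_1(t)$,
\begin{eqnarray*}
P(X_1(1)+Y>u)\le P\Bigl(\sup_{0\le t\le1}X(t)>u\Bigr)\le P(M_1+Y>u).
\end{eqnarray*}
Now apply Lemma \ref{l:comp} with $Z=M_1$, $W=X_1(1)$ and the independent $Y$: the hypothesis that $X_1$ satisfies (\ref{maineq}) says exactly $P(M_1>u)/P(X_1(1)>u)\to1$, i.e.\ (\ref{eq}) holds, so $P(M_1+Y>u)/P(X_1(1)+Y>u)\to1$. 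Hence $P(\sup_t X(t)>u)\sim P(X_1(1)+Y>u)=P(X(1)>u)$, which is (\ref{maineq}) for $X$.

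The one real obstacle is verifying that $Y=X_2(1)$ genuinely satisfies (\ref{cond.pl}) and that $P(X_1(1)>u)>0$ (needed so the ratios make sense and so Lemma \ref{l:comp} applies); the former follows from the exponential-moment bound on a bounded-jump subordinator together with the elementary fact recorded after (\ref{cond.inf}) — any tail of the form $\exp(-\int_0^u h)$ with $h\to\infty$, or more crudely any law with all exponential moments finite and unbounded support, obeys (\ref{cond.pl}) — while the positivity $P(X_1(1)>u)>0$ for all $u$ is inherited from the hypotheses on $\rho_1$ (in particular $\rho_1((a_1,\infty))>0$) exactly as in the paragraph following (\ref{cond.pl.lat}). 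If $X_1(1)$ has bounded support (the situation of (\ref{light2})), then $X_1$ having bounded-above support and satisfying (\ref{maineq}) makes the argument even easier, since then $M_1$ is bounded and one can absorb it directly; I would handle that degenerate case in a line. Everything else is the routine application of Lemma \ref{l:comp} spelled out above.
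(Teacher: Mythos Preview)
Your sandwich argument via the monotonicity of the subordinator is exactly the right structure, and it is the same starting point as the paper's proof: bound $\sup_t X(t)$ above by $\sup_t X_1(t)+X_2(1)$ and below by $X(1)$. The difference lies in how you pass from $P(\sup_t X_1(t)+X_2(1)>u)$ to $P(X_1(1)+X_2(1)>u)$.

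You want to invoke Lemma~\ref{l:comp} with $Y=X_2(1)$, and for this you need $Y$ to satisfy (\ref{cond.pl}). Your justification --- ``any law with all exponential moments finite and unbounded support obeys (\ref{cond.pl}) after a short Markov-inequality argument'' --- is \emph{false}. A Markov bound gives only an upper bound $P(Y>u)\le e^{-I(u)}$ with $I(u)/u\to\infty$; it says nothing about the ratio $P(Y>u+a)/P(Y>u)$. Concretely, take $g(u)=2^{2^n}$ for $u\in[2^n,2^{n+1})$ and set $P(Y>u)=e^{-g(u)}$. Then $E e^{cY}<\infty$ for every $c>0$, yet for any fixed $a>0$ and $u\in[2^n,2^{n+1}-a)$ one has $P(Y>u+a)/P(Y>u)=1$, so (\ref{cond.pl}) fails for every $a$. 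Thus your route through Lemma~\ref{l:comp} has a genuine gap.

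It is in fact true that $X_2(1)$, being infinitely divisible with L\'evy measure supported in $(0,a_2]$, satisfies (\ref{cond.pl}); but establishing this amounts to a sharp tail estimate of the form $-\log P(X_2(1)>u)\sim (u/a_2)\log u$, which is essentially the content of the Kruglov-type result the paper cites. The paper sidesteps this entirely: it carries out the convolution estimate by hand (the same $\varepsilon$--$A$ splitting that underlies the proof of Lemma~\ref{l:comp}), and then only needs the \emph{weaker} comparison $P(X_2(1)>u-A)=o\bigl(P(X_1(1)>u)\bigr)$, which it gets directly from \cite{bib:77}. Since $X_2(1)\ge0$ gives $P(X_1(1)>u)\le P(X(1)>u)$, this remainder is $o(P(X(1)>u))$ and the $\limsup\le1$ follows. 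You can repair your argument either by proving the needed tail asymptotic for $X_2(1)$ (nontrivial), or --- more simply --- by abandoning Lemma~\ref{l:comp} here and doing the direct splitting as the paper does.
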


\begin{proof}
Since $X_2(t)$ is a subordinator, then
\beam
\label{first}
P\left(\sup_{0\le t\le 1}X(t)>u\right) \le P\left(\sup_{0\le t\le 1}X_1(t)+X_2(1) >u\right)
\eeam
$$
\le \int_{-\infty}^{u-A}P\left(\sup_{0\le t\le 1}X(t)>u-v\right)F_{X_2(1)}(dv) + 
P(X_2(1)>u-A)\,,
$$
where $A$ is a positive constant. Because $X_1(t)$ satisfies (\ref{maineq}), for a fixed
$\ep >0$ there is $A$ such that the integral does not exceed 
$$
(1+\ep)P(X_1(1)+X_2(1) >u) = (1+\ep)P(X(1) >u)\,. 
$$
It is well known  that the conditions $\rho_1((a_1,\infty))>0$ and $\rho_2((a_2,\infty))=0$ for $a_1>a_2$ 
implies  
$$
P(X_2(1)>u-A) = o\left(P(X_1(1)>u)\right) 
$$
for any positive $A$ as $u\to\infty$ (see \cite{bib:77}). From here and (\ref{first})
$$
\limsup_{u\to\infty}\frac{P(\sup_{0\le t\le1}X(t)>u)}{P(X(1)>u)} \le (1+\ep)
$$
for each $\ep>0$, and the lemma follows. \end{proof}


Let $b>0$ and $Z(t)$ be  defined by (\ref{comp.P}). 
Denote by $\Gamma_k\,,k\ge 1$, the arrival times of $Z(t)$ and put $\Gamma_0=0$. Let
\beam
\label{def.tau}
\tau = \max\{k: \Gamma_k<1\}\,.
\eeam  
Let 
\beam
\label{def.m}
m = \min\{k: P(S_k>b)>0\}\,,
\eeam
and
\beam
\label{def.ak}
a_k = \max\left\{1 - \frac{(m+1)\log k}k\,,\,0 \right\}.
\eeam
Put
\beam
\label{def.Q}
Q(u) = P\left(Z(1)>u+b\Gamma_{\tau}\,, \Gamma_{\tau}\le a_{\tau}\right)
\eeam

\begin{lemma}
\label{l:int}
If $X_k$ have a light tail, then for any $b>0$
\beam
\label{lim.Q}
\lim_{u\to\infty}\frac{Q(u)}{P(Z(1)>u+b)} = 0\,.
\eeam
\end{lemma}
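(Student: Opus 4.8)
The plan is to condition on $\tau$, the number of jumps of $Z$ in $[0,1]$, and to use the standard fact that, given $\tau=k$, the jump times $\Gamma_1<\dots<\Gamma_k$ are distributed as the order statistics of $k$ independent uniform random variables on $[0,1]$, independently of the jump sizes $X_1,\dots,X_k$. Since $Z(1)=S_\tau$, since $b\Gamma_\tau\ge0$ for $b>0$, and since $P(\Gamma_k\le a_k\mid\tau=k)$ is the probability that the largest of $k$ independent uniforms stays below $a_k$, I obtain, with $c_k:=e^{-\la}\la^k/k!$,
$$
Q(u)\ \le\ \sum_{k\ge1}P\big(S_k>u,\ \Gamma_k\le a_k,\ \tau=k\big)\ =\ \sum_{k\ge1}c_k\,a_k^k\,P(S_k>u),\qquad u>0
$$
(the $k=0$ summand is $0$). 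Note that the light-tail hypothesis forces $P(X_1>\delta)>0$ for some $\delta>0$, so $m<\infty$, $P(S_m>b)>0$, and $P(Z(1)>u+b)>0$ for every $u$, which makes the ratio in (\ref{lim.Q}) meaningful.

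For the denominator I would keep only the terms of the Poisson mixture with at least $m$ jumps and "use up" $m$ of them to produce the extra $b$. Since $S_{j+m}$ is the sum of $S_j$ and an independent copy of $S_m$,
$$
P\big(Z(1)>u+b\big)=\sum_{k\ge0}c_k\,P(S_k>u+b)\ \ge\ \sum_{j\ge0}c_{j+m}\,P(S_{j+m}>u+b)\ \ge\ P(S_m>b)\sum_{j\ge1}c_{j+m}\,P(S_j>u).
$$
Dividing, and writing $w_j(u):=c_{j+m}P(S_j>u)\ge0$ and $d_j:=c_j a_j^j/c_{j+m}=\frac{(j+m)!}{j!\,\la^m}\,a_j^j$, this gives
$$
\frac{Q(u)}{P(Z(1)>u+b)}\ \le\ \frac1{P(S_m>b)}\cdot\frac{\sum_{j\ge1}d_j\,w_j(u)}{\sum_{j\ge1}w_j(u)}.
$$

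Next I would check that the truncation levels $a_k$ in (\ref{def.ak}) are chosen precisely so that $d_j\to0$: from $1+x\le e^x$ one gets $a_j^j\le e^{-(m+1)\log j}=j^{-(m+1)}$ (and $a_j^j=0$ when $a_j=0$), while $(j+m)!/j!=\prod_{i=1}^m(j+i)\le(2j)^m$ for $j\ge m$; hence $d_j\le 2^m\la^{-m}j^{-1}\to0$. The last fraction is therefore a weighted average of the null sequence $(d_j)$ with nonnegative weights $w_j(u)$. Given $\vep>0$, choose $K$ with $\sup_{j>K}d_j\le\vep$ and split the sum at $K$; it then remains to prove, for each fixed $K$, that $\big(\sum_{j=1}^K w_j(u)\big)\big/\big(\sum_{j\ge1}w_j(u)\big)\to0$ as $u\to\infty$. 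In case (\ref{light2}) this is trivial, because $P(S_j>u)=0$ once $u>Aj$, so the numerator vanishes for $u>AK$ while the denominator stays positive; in case (\ref{light1}) it follows from $P(S_j>u)=o(P(S_{j+1}>u))$ (Lemma~4 of \cite{bib:4}; see (\ref{Sk.0})), since then $\sum_{j=1}^K w_j(u)=o\big(w_{K+1}(u)\big)=o\big(\sum_{j\ge1}w_j(u)\big)$. Letting $u\to\infty$ and then $\vep\to0$ yields (\ref{lim.Q}).

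The conditioning identity of the first paragraph and the elementary bound $d_j\to0$ are routine. The step I expect to be the real obstacle is the last one: showing that the finitely many "small-$j$" weights $w_1(u),\dots,w_K(u)$ are negligible against the whole sum as $u\to\infty$. This is exactly where the light-tail structure enters — through the convolution property $P(S_j>u)=o(P(S_{j+1}>u))$ in case (\ref{light1}), and through the a.s.\ bound $S_j\le Aj$ in case (\ref{light2}) — and it is the reason the two cases have to be handled separately.
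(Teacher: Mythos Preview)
Your proof is correct and follows essentially the same route as the paper: both bound $Q(u)\le\sum_{k\ge1}c_k a_k^k P(S_k>u)$, lower-bound $P(Z(1)>u+b)$ by $P(S_m>b)\sum_{k\ge1}c_{k+m}P(S_k>u)$, then split into small $k$ (handled by the light-tail property (\ref{Sk.0})) and large $k$ (handled by $(k+1)\cdots(k+m)a_k^k\to0$, which is your $d_k\to0$). Your weighted-average packaging is equivalent to the paper's term-by-term ratio bound $\sup_{k>M}\delta(k,u)$, and your explicit separation of cases (\ref{light1}) and (\ref{light2}) makes transparent a point the paper leaves implicit.
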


\begin{proof}
It can be easily verified that
$$
Q(u) = \la e^{-\la}\sum_{k=1}^\infty\int_0^{a_k}\frac{(\la t)^{k-1}}{(k-1)!}P(S_k>u+bt)dt\,.
$$
Fix an index $M$ and denote
$$
Q_M(u) = \la e^{-\la}\sum_{k=1}^M\int_0^{a_k}\frac{(\la t)^{k-1}}{(k-1)!}P(S_k>u+bt)dt\,,\quad Q^{(M)}(u)
= Q(u) - Q_M(u)\,.
$$
It is clear that
$$
P(Z(1)>u+b) = e^{-\la}\sum_{k=1}^\infty\frac{\la^k}{k!}P(S_k>u+b)\,
$$
and, therefore, for each $k$
\beam
\label{tail.cP}
P(Z(1)>u+b) > \frac{\la^{k+m}}{(k+m)!}P(S_{k+m}>u+b) > \frac{\la^{k+m}}{(k+m)!}P(S_m>b)P(S_{k}>u)\,.
\eeam
Since
$$
Q_M(u) \le e^{-\la}\sum_{k=1}^M\frac{\la^k}{k!}P(S_k>u),
$$
the last estimate and (\ref{Sk.0}) imply
$$
\lim_{u\to\infty}\frac{Q_M(u)}{P(Z(1)>u+b)} = 0\,.
$$
Further, denoting
$$
\delta(k,u)= \frac{\la \int_0^{a_k}\frac{(\la t)^{k-1}}{(k-1)!}P(S_k>u+bt)dt}{\frac{\la^{k+m}}{(k+m)!}P(S_{k+m}>u+b)}\,,
$$ 
we see that
$$
\frac{Q^{(M)}(u)}{P(X(1)>u)} \le \sup_{k>M}\delta(k,u)\,, 
$$
and
$$
\delta(k,u) \le \frac{\frac{(\la a_k)^k}{k!}P(S_k>u)}{\frac{\la^{k+m}}{(k+m)!}P(S_k>u)P(S_m>b)}
= \frac{(k+1)\cdots(k+m)a_k^k}{\la^mP(S_m>b)}\,.
$$
Hence
$$
\limsup_{u\to\infty}\frac{Q(u)}{P(X(1)>u)} = \limsup_{u\to\infty}\frac{Q^{(M)}(u)}{P(X(1)>u)} 
\le \sup_{k>M} \frac{(k+1)\cdots(k+m)a_k^k}{\la^mP(S_m>b)}\,.
$$
According to (\ref{def.ak}) $(k+1)\cdots(k+m)a_k^k\to0$ as $k\to\infty$. Hence, letting $M\to\infty$
we come to (\ref{lim.Q}).
\end{proof}

We also will use the following well known estimate for the normal distribution. If $Y$ is normal with
mean zero and variance one, then for all $x>1$
\beam
\label{norm}
\frac{1}{\sqrt{2\pi}}\left(\frac{1}{x} -\frac{1}{x^3} \right)\exp\left(-\frac{x^2}{2}\right) \le P(Y>x)
\le \frac{1}{\sqrt{2\pi}}\frac{1}{x}\exp\left(-\frac{x^2}{2}\right)\,.
\eeam

\section{Proof of Theorem \ref{t:main}}
\label{s:pr.1}

According to (\ref{rho.2}) we can represent our process in the form (\ref{rep.sub}), 
 and Lemma \ref{l:gen} shows that it is enough to prove the theorem for the process of type (\ref{def.X}).

If $b<0$, then 
$$
P\left(\sup_{0\le t\le 1}X(t)>u\right) \le P\left(\sup_{0\le t\le1}[B(t)+Z(t)]  >u+b\right)\,.
$$
So, (\ref{maineq}) for $b=0$ implies the same relation for $b<0$. Hence, we may assume $b\ge0$ in the sequel. 
Without loss of generality $\sigma=1$.

Let $\tau$ be given by (\ref{def.tau}). Then 
$$
P\left(\sup_{0\le t \le 1}X(t)>u \right) \le P\left(\sup_{0\le t < \Gamma_{\tau}}X(t)>u \right)
+ P\left(\sup_{\Gamma_{\tau}\le t \le 1}X(t)>u \right)
$$
$$
= A(u) +C(u)\,.
$$
The theorem will follow from the next two equalities:
\beam
\label{est.C}
\lim_{u\to\infty}\frac{C(u)}{P(X(1)>u)} =1
\eeam
and
\beam
\label{est.A}
\lim_{u\to\infty}\frac{A(u)}{P(X(1)>u)} =0\,.
\eeam

\subsection{Proof of (\ref{est.C}).} Let $\widetilde B(t)$ be a Brownian motion independent of $X(t)$. We have 
\label{ss:C}
$$
C(u) = P\left(\sup_{\Gamma_{\tau}\le t\le 1} \left[B(\Gamma_{\tau}) +S_{\tau} -bt + \widetilde B(1-t) \right]>u \right)
$$
$$
\le P\left(B(\Gamma_{\tau}) +S_{\tau} -b\gt + |\widetilde B(1-\Gamma_{\tau})| >u \right)
$$
because  L\'evy formula (\ref{B.sup}) can be written in the form
\beam
\label{sup.B} 
\sup_{0\le t\le 1}B(t) \eid |B(1)|\,. 
\eeam
 Applying Lemma \ref{l:sym}
conditionally on $\Gamma_{\tau}$ and taking into account the relations 
$$
B(\Gamma_{\tau})+\widetilde B(1-\gt) \eid B(1)\quad\mbox{and}\quad S_\tau= Z(\gt) = Z(1)\,, 
$$ 
we conclude that
\beam
\label{C(u)}
C(u)\le 2P\left(B(1) + Z(1)  >u + b\gt \right)
- P\left(B(\Gamma_{\tau}) +Z(1) >u + b\gt +|\widetilde B(1-\gt)| \right).
\eeam
To obtain (\ref{est.C}) it is enough to prove the following 
\begin{lemma} 
\label{l:B}
For each $b\ge 0$:
\beam
\label{liminf.P}
\liminf_{u\to\infty}
\frac{P\left(B(\gt)+Z(1)>u+b\Gamma_{\tau}+ |\widetilde B(1-\Gamma_{\tau})|\right)}{P(B(1)+Z(1)>u+b)} \ge 1\,,
\eeam
and
\beam
\label{lim.P}
\lim_{u\to\infty}\frac{P(B(1)+Z(1)>u+b\Gamma_{\tau})}{P(B(1)+Z(1)>u+b)} = 1\,.
\eeam
\end{lemma}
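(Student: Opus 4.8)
The plan is to prove the two displays separately, in both cases exploiting that the last jump time $\gt$ lies close to $1$ exactly on the events carrying the tail of $B(1)+Z(1)$; the quantitative version of this is already built into the sequence $a_k$ of (\ref{def.ak}) and into Lemma \ref{l:int}. Throughout I use that, conditionally on $\tau=k$, the variable $Z(1)=S_k$ is independent of $\gt$, which is then distributed as $\max(U_1,\dots,U_k)$ for iid uniforms $U_i$ on $[0,1]$, that $P(B(1)+Z(1)>u+b)=e^{-\la}\sum_{k\ge0}\frac{\la^k}{k!}P(B(1)+S_k>u+b)$, and that $B(1)$ satisfies (\ref{cond.pl}).

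For (\ref{lim.P}) the inequality $\liminf\ge1$ is immediate from $\gt\le1$ and $b\ge0$. For $\limsup\le1$ I split $P(B(1)+Z(1)>u+b\gt)$ according to whether $\gt\le a_\tau$ or $\gt>a_\tau$. The part on $\{\gt\le a_\tau\}$ equals $P(B(1)+W>u)$, where $W$ is the (defective) variable with $P(W>u)=Q(u)$ and $Q$ as in (\ref{def.Q}); since $Q(u)=o(P(Z(1)>u+b))$ by Lemma \ref{l:int}, Lemma \ref{l:comp} applied with $Y=B(1)$ makes this part $o(P(B(1)+Z(1)>u+b))$. On $\{\gt>a_\tau\}$, conditioning on $\tau=k$ gives
$$P(B(1)+Z(1)>u+b\gt,\ \gt>a_\tau)=e^{-\la}\sum_{k}\frac{\la^k}{k!}\int_{a_k}^1 P(B(1)+S_k>u+bt)\,kt^{k-1}\,dt.$$
For $k$ below a fixed bound $K$ (this covers the finitely many $k$ with $a_k=0$, and $\tau=0$) I bound the inner integral by $P(B(1)+S_k>u)$, which by Lemma \ref{l:Y} iterated, together with $P(B(1)+S_{k+m+1}>u+b)\ge P(S_m>b)P(B(1)+S_{k+1}>u)$ with $m$ as in (\ref{def.m}), is $o(P(B(1)+Z(1)>u+b))$; the finite sum stays negligible. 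For $k>K$, $\int_{a_k}^1 P(B(1)+S_k>u+bt)\,kt^{k-1}\,dt\le P(B(1)+S_k>u+b-\delta_k)$ with $\delta_k:=b(m+1)(\log k)/k\to0$, so it remains to show $\sum_{k>K}\frac{\la^k}{k!}P(B(1)+S_k>u+b-\delta_k)\le(1+\rho(K))\sum_{k>K}\frac{\la^k}{k!}P(B(1)+S_k>u+b)$ with $\rho(K)\to0$. Conditioning on $S_k$ and writing $v=u+b-S_k$, the ratio $P(B(1)>v-\delta_k)/P(B(1)>v)$ tends to $1$ uniformly on bounded $v$-sets by continuity of the normal distribution function, while on $\{v>V\}$ the estimate (\ref{norm}) bounds it by $(1+o_V(1))\,e^{\delta_k v}$; what is left is to show that the region of large overshoot $u-S_k$ contributes a vanishing fraction of $P(B(1)+S_k>u+b)$, uniformly over the relevant range of $k$, so that there $\delta_k v$ is effectively small. \emph{This last uniformity --- that, conditionally on $B(1)+S_k$ being large, $S_k$ is close to $u$ and $B(1)$ is moderate, at a rate matching $\delta_k=O((\log k)/k)$ --- is the technical heart of the proof and the main obstacle}, and it is what forces the rate in the definition (\ref{def.ak}) of $a_k$. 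Letting $K\to\infty$ then gives $\limsup\le1$.

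For (\ref{liminf.P}) it suffices to bound the numerator from below on a sequence of good events. Given $\gt=\gamma$, the variables $B(\gamma)$ and $\widetilde B(1-\gamma)$ are independent centred Gaussians of variances $\gamma$ and $1-\gamma$, so I restrict to $\{\tau=k\}\cap\{\gt>a_k\}\cap\{|\widetilde B(1-\gt)|\le\eta_k\}$ with $\eta_k\to0$ chosen so that $\eta_k/\sqrt{1-a_k}\to\infty$; conditionally on $\{\tau=k,\ \gt>a_k\}$ the last event then has probability tending to $1$. On the good event $u+b\gt+|\widetilde B(1-\gt)|\le u+b+\eta_k$ and $\gt\ge a_k$, whence the conditional probability is at least $P\big(N(0,1)>(u+b+\eta_k-S_k)/\sqrt{a_k}\big)$, which (\ref{norm}) compares with $P(N(0,1)>u+b-S_k)$ up to a factor $1-o(1)$ --- again using that the overshoot $u-S_k$ is negligible against $1/(1-a_k)$ on the dominating events, the same obstacle as above. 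Summing over $k$ and comparing with the series for $P(B(1)+Z(1)>u+b)$ yields $\liminf\ge1$. (Combining this with (\ref{lim.P}) and the elementary chain $P(X(1)>u)\le C(u)\le 2P(B(1)+Z(1)>u+b\gt)-P(B(\gt)+Z(1)>u+b\gt+|\widetilde B(1-\gt)|)$ already forces the matching bound $\limsup\le1$, so the limit in (\ref{liminf.P}) is in fact $1$.)
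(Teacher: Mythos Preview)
Your outline follows the paper's strategy closely: for (\ref{lim.P}) you split on $\{\gt\le a_\tau\}$ versus $\{\gt>a_\tau\}$, dispose of the first piece via Lemma~\ref{l:int} and Lemma~\ref{l:comp}, and handle the finitely many small $k$ via Lemma~\ref{l:Y}; for (\ref{liminf.P}) you restrict to good events where $\gt$ is near $1$ and $|\widetilde B(1-\gt)|$ is small. All of this is correct in spirit. The genuine gap is precisely the point you flag as ``the main obstacle'': you do not actually prove the uniformity that, conditionally on $\{B(1)+S_k>u+b\}$, the overshoot $u-S_k$ is small enough that $\delta_k(u-S_k)\to0$ (equivalently that $(1-a_k)(u-S_k)\to0$). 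Acknowledging the obstacle is not the same as resolving it, and in the paper this step occupies the bulk of the proof --- Propositions~\ref{p:21} and~\ref{p:J} for (\ref{liminf.P}), and the $R_1/R_2$ decomposition for (\ref{lim.P}).

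The missing device is a \emph{second} threshold, on $S_k$ rather than on $\gt$: set $g_a(k,u)=u-a\log(\min\{k,u\})$ for a large constant $a$. On $\{S_k\le g_a(k,u)\}$ one has $u-S_k\ge a\log k$, and the normal-tail ratio (\ref{norm}) gives, with $\alpha$ as in (\ref{def.alpha}),
\[
\frac{P(B(1)+S_k>u,\ S_k\le g_a(k,u))}{P(B(1)+S_{k+1}>u)}\ \le\ \frac{C}{P(X_1>\alpha)}\,e^{-\alpha a\log k}=Ck^{-\alpha a},
\]
so this region contributes a fraction $O(k^{1-\alpha a})$ of the $k$th term, summably small once $\alpha a>1$ (this is Proposition~\ref{p:21} and the $R_2$ estimate). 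On the complementary region $\{S_k>g_a(k,u)\}$ the overshoot satisfies $u-S_k<a\log k$, and then the error you need is $b(1-a_k)(u-S_k)\le b\cdot\frac{(m+1)\log k}{k}\cdot a\log k=O\big((\log k)^2/k\big)\to0$, which gives the ratio $\le1+\varepsilon$ uniformly in $u$ for large $k$ (the $R_1$ estimate). For (\ref{liminf.P}) the paper first reduces to $b=0$ and replaces $|\widetilde B(1-\gt)|$ by the deterministic majorant $D\sqrt{1-\gt}$; then the same $g_a(k,u)$ threshold feeds into Proposition~\ref{p:J}, which additionally requires a three-case split on the sign and size of $u-y$ (since when $y>u$ the monotonicity of $v\mapsto P(B(1)>v/\sqrt t)$ in $t$ reverses). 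Without this explicit threshold on $S_k$ and the accompanying normal-tail computations, neither half of the lemma is proved.
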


\noindent{\bf Proof of (\ref{liminf.P}).}
Because 
$$
P\left(B(\gt)+Z(1)>u+b\Gamma_{\tau}+ |\widetilde B(1-\Gamma_{\tau})|\right) 
\ge P\left(B(\gt)+Z(1)>u+b+ |\widetilde B(1-\Gamma_{\tau})|\right)\,,
$$
it is enough to establish (\ref{liminf.P}) for $b=0$. 

\noindent{\it Step 1.} Fix a $\delta\in (0,1)$. There is a positive constant $D$ such that
$$
P(|B(1)|<D)>1-\delta\,.
$$ 
Since 
$$
\widetilde B(1-t)\eid \sqrt{1-t}B(1)\,,\, 0<t<1\,, 
$$
we get for each $t\in (0,1)$ and $k\in \mathbf N$ 
$$
P\left( B(t)+S_k>u + |\widetilde B(1-t)|\right) \ge P\left( B(t)+S_k>u +\drt\right)P\left(|\widetilde B(1-t)| < \drt \right)
$$
$$
\ge (1-\delta)P\left(B(t)+S_k>u+\drt \right)\,.
$$
Integrating with respect to $\Gamma$-densities and summing up over $k$'s we obtain
\beam
\label{def.H}
P(X(\Gamma_\tau ) >u+|\widetilde B(1-\Gamma_\tau )|) 
\eeam
$$
\ge (1-\delta) e^{-\la}\sum_{k=1}^\infty\frac{\la^k}{(k-1)!}
\int_0^1 P\left(B(t)+S_k>u+\drt \right)t^{k-1}dt 
$$
$$
=(1-\delta )P(X(\Gamma_\tau ) >u+D\sqrt{1-\Gamma_{\tau}} ) :=(1-\delta)H(u)\,.
$$
To prove (\ref{liminf.P}) for $b=0$ it is enough to establish that for each $D>0$
 \beam
 \label{inf.H}
\liminf_{u\to\infty}\frac{H(u)}{P(X(1)>u)} \ge 1\,.
\eeam

\noindent{\it Step 2.} From now on $\alpha$ is a positive  constant for which
\beam
\label{def.alpha}
P(X_1>\alpha) >0\,,
\eeam
and $a$ is a constant such that
\beam
\label{def.a}
 a > \max\left\{1, \frac1{\alpha} \right\}\,.
 \eeam

For fixed $T\in \bf{N}$ and $u>0$, where $2\le T < au$, we divide $\bf N$ into three parts:
\beam\label{Ni}
{\bf N}_1(T,u) = \{k: k\le T\}\,,\, {\bf N}_2(T,u) = \{k:  T< k \le [au]\},\,   
\eeam
$$
 {\bf N}_3(T,u) = \{k:   k>[au]\}\,. 
$$ 
Using (\ref{tail.X}) and denoting by $G_i(u)\,,i=1,2,3$, the sums of summands over ${\bf N}_i(T,u)$ we may write 
\beam
\label{tail.rep} 
P(X(1)>u) = e^{-\lambda}P(B(1)>u) + G_1(u) + G_2(u)+G_3(u)\,.
\eeam
It follows from Lemma \ref{l:Y} that for each $T\in {\bf N}$
\beam
\label{G1}
\lim_{u\to\infty}\frac{e^{-\lambda}P(B(1)>u) + G_1(u)}{P(X(1)>u)} =0\,.
\eeam

Now we show that
\beam
\label{G3}
\lim_{u\to\infty}\frac{G_3(u)}{P(X(1)>u)} =0\,.
\eeam
Indeed, according to Stirling formula
$$
G_3(u) \le \frac{\la^{[au]+1}}{([au]+1)!} = \exp\left(-a(u\log u)(1+ g(u)) \right)\,,
$$
where $g(u)\to1$ as $u\to\infty$. 

On the other hand, for 
$$
k(u) = \max\left\{[u]\,,\left[\frac{u}{\alpha}\right] \right\}
$$
we have, once again applying Stirling formula,
$$
P(X(1)>u)\ge e^{-\la}\frac{\la^{k(u)}}{k(u)!}P(B(1)+S_{k(u)}>u) 
$$
$$
\ge e^{-\la}\frac{\la^{k(u)}}{k(u)!}P(B(1)>0)P(X_j>\alpha\,,1\le j \le k(u))
$$
$$
= \exp\left(-k(u)\log k(u)(1+g_1(u) \right)\,,
$$
where, as above, $g_1(u)\to1$ as $u\to\infty$. According to (\ref{def.a})
$$
k(u) \le u\max\left\{1\,,\frac{1}{\alpha} \right\} <au\,,
$$
and (\ref{G3}) follows from here and the last two estimates.

So, for each $T\in {\bf N}$
\beam
\label{G2}
\lim_{u\to\infty}\frac{G_2(u)}{P(X(1)>u)} =1\,.
\eeam

\noindent{\it Step 3.} Here we represent $G_2(u)$ as a sum of two quantities, such that
the first of them is small relative to $P(X(1)>u)$. Denote
\beam
\label{def.ga} 
g_a(k,u) = u - a\log(\min\{k,u\})
\eeam
and
\beam
\label{def.I}
I(k,u) = P\left(B(1)+S_k>u\,,S_k\le g_a(k,u)  \right)\,.
\eeam
Put
\beam
\label{G21}
G_{21}(u) = e^{-\la}\sum_{k=T+1}^{[au]}\frac{\la^k}{k!}I(k,u)\,,\quad G_{22}(u)= G_{2}(u) -G_{21}(u)\,.   
\eeam

\begin{proposition}
\label{p:21}
For $u>T$ the following inequality holds:
\beam
\label{est.21}
\frac{G_{21}(u)}{P(X(1)>u)} \le \frac{2aCe^{\alpha^2/2}}{\la P(X_1>\alpha)}T^{1-\alpha a}\,, 
\eeam
where $C$ is an absolute constant.
\end{proposition}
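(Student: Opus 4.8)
The plan is to start from the series form
$$G_{21}(u)=e^{-\la}\sum_{k=T+1}^{[au]}\frac{\la^k}{k!}I(k,u),\qquad I(k,u)=P\bigl(B(1)+S_k>u,\ S_k\le g_a(k,u)\bigr),$$
and to exploit that on the event defining $I(k,u)$ one automatically has $B(1)>u-S_k\ge u-g_a(k,u)=a\log n_k$, where $n_k:=\min\{k,u\}$. The whole proof hinges on a pointwise estimate of $I(k,u)$ that keeps track both of this forced largeness of the Brownian part and of the decay in $u$.

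First I would record the plain Gaussian bound. Writing $I(k,u)=\int_{-\infty}^{g_a(k,u)}P(B(1)>u-s)\,dF_{S_k}(s)$ and using $u-s\ge a\log n_k$ on the range of integration together with the upper estimate in (\ref{norm}), one gets $I(k,u)\le P(B(1)>a\log n_k)\le C\,n_k^{-\alpha a}$; here the last inequality uses $\frac12 a^2(\log n_k)^2\ge\alpha a\log n_k$, which holds for all $k>T$ once $T$ is large (recall $n_k>T$), since then $a\log n_k\ge2\alpha$. Summing this crude bound,
$$\sum_{k=T+1}^{[au]}n_k^{-\alpha a}\le\sum_{T<k\le u}k^{-\alpha a}+(a-1)u^{1-\alpha a}\le\frac{T^{1-\alpha a}}{\alpha a-1}+(a-1)T^{1-\alpha a}\le 2a\,T^{1-\alpha a},$$
using monotonicity of $x\mapsto x^{-\alpha a}$, $u>T$ and $\alpha a>1$; this is the origin of the factor $2a\,T^{1-\alpha a}$ in the statement. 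This estimate is, however, uniform in $u$ and does not by itself go to $0$ as $u\to\infty$, so it must be matched against the tail of $X(1)$.

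For that I would use lower bounds for $P(X(1)>u)$ of the type already obtained in Step 2: from the series representation of $P(X(1)>u)$ one has $P(X(1)>u)\ge e^{-\la}\frac{\la^k}{k!}P(B(1)+S_k>u)$ for every $k$, and by forcing $B(1)>\alpha$ and all $k$ jumps to exceed $\alpha$ (so $S_k>k\alpha\ge u$ once $k\ge u/\alpha$) also $P(X(1)>u)\ge\frac12 e^{-\la}(\la P(X_1>\alpha))^{k}/k!$ for $k\ge u/\alpha$; inverting such bounds, with the lower estimate in (\ref{norm}) used to bound $1/P(B(1)>\alpha)$ from above, is what produces the factors $e^{\alpha^2/2}$ and $1/(\la P(X_1>\alpha))$. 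To give $I(k,u)$ the correct $u$-decay I would split its defining event at a level $\beta_k\ge a\log n_k$: the part $\{S_k\le u-\beta_k\}$ forces $B(1)>\beta_k$ and contributes at most $P(B(1)>\beta_k)$, while the part $\{u-\beta_k<S_k\le g_a(k,u)\}$ contributes at most $C\,n_k^{-\alpha a}P(S_k>u-\beta_k)$, which the above lower bounds let me compare with $n_k^{-\alpha a}P(X(1)>u)$. Summing the first pieces against $P(X(1)>u)$ (whose decay, for large $u$, is governed by the compound-Poisson contribution rather than by $P(B(1)>u)$), the second pieces via $\sum_k n_k^{-\alpha a}\le 2aT^{1-\alpha a}$, and disposing of the finitely many smallest $k$ by the $o(P(X(1)>u))$ estimate of Lemma \ref{l:Y}, yields the asserted inequality.

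The real obstacle is precisely this matching of rates: the clean bound $I(k,u)\le C n_k^{-\alpha a}$ carries the right $k$-dependence (hence the $T^{1-\alpha a}$) but is $u$-insensitive, whereas comparison with $e^{-\la}\frac{\la^k}{k!}P(S_k>u)$ carries the right $u$-decay but a $k$-dependent prefactor that must be kept summable. Choosing, simultaneously for all $k$ in the range and uniformly in $u>T$, a split level $\beta_k$ that controls both halves is the delicate step; once such a pointwise estimate is in place, the summation is routine.
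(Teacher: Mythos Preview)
Your proposal has a real gap: you never actually complete the comparison of $G_{21}(u)$ with $P(X(1)>u)$. The ``crude bound'' $I(k,u)\le C\,n_k^{-\alpha a}$ is correct but, as you yourself note, carries no $u$-decay; your attempt to repair this by splitting at a level $\beta_k$ is left unfinished, and you explicitly flag the choice of $\beta_k$ as the delicate step without carrying it out. Also, the sum $\sum_{k} n_k^{-\alpha a}$ you compute is not the right object: the terms of $G_{21}(u)$ carry weights $e^{-\la}\la^k/k!$, so summing naked $n_k^{-\alpha a}$ does not bound $G_{21}(u)$ itself, let alone the ratio.

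The idea you are missing is that no splitting is needed if you compare \emph{termwise} with the series for $P(X(1)>u)$. Since $P(X(1)>u)\ge e^{-\la}\frac{\la^{k+1}}{(k+1)!}P(B(1)+S_{k+1}>u)$ for every $k$, the elementary inequality $\frac{\sum a_k}{\sum b_k}\le\max_k\frac{a_k}{b_k}$ gives
\[
\frac{G_{21}(u)}{P(X(1)>u)}\le\max_{T<k\le au}\frac{(k+1)}{\la}\,\frac{I(k,u)}{P(B(1)+S_{k+1}>u)}.
\]
Now use $P(B(1)+S_{k+1}>u)\ge P(X_1>\alpha)\,P(B(1)+S_k>u-\alpha)$ and write both $I(k,u)$ and $P(B(1)+S_k>u-\alpha)$ as integrals of Gaussian tails against $F_{S_k}$; restricting the denominator's integral to $(-\infty,g_a(k,u)]$ only helps, and then
\[
\frac{I(k,u)}{P(B(1)+S_k>u-\alpha)}\le\sup_{y\le g_a(k,u)}\frac{P(B(1)>u-y)}{P(B(1)>u-y-\alpha)}\le Ce^{\alpha^2/2}\,n_k^{-\alpha a},
\]
by (\ref{norm}) and $u-y\ge a\log n_k$. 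This is exactly the decay you wanted, but obtained as a \emph{ratio}, so the $u$-dependence has cancelled automatically. Taking the max of $(k+1)n_k^{-\alpha a}$ over $T<k\le au$ (distinguishing $k\le u$ and $u<k\le au$, and using $\alpha a>1$, $u>T$) yields the bound $2a\,T^{1-\alpha a}$ and hence (\ref{est.21}). The whole argument is three lines once you see that the right move is a ratio of integrals over the same measure rather than an absolute bound on $I(k,u)$.
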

\begin{proof}
We have, using (\ref{tail.X}),
\beam
\label{G21.2}
\frac{G_{21}(u)}{P(X(1)>u)} \le \max_{T+1\le k\le au}
\frac{\frac{\la^k}{k!}I(k,u)}{\frac{\la^{k+1}}{(k+1)!}P(B(1)+S_{k+1}>u)} 
\eeam 
$$
= \max_{T+1\le k\le au}\frac{(k+1)I(k,u)}{\la P(B(1)+S_{k+1}>u)}\,.
$$
Further,
$$
P(B(1)+S_{k+1}>u)\ge P(B(1)+S_{k}>u-\alpha)P(X_1>\alpha)\,,
$$
which yields
$$
\frac{I(k,u)}{P(B(1)+S_{k+1}>u)}\le \frac{1}{P(X_1>\alpha)}\frac{I(k,u)}{P(B(1)+S_{k}>u-\alpha)}
$$
$$
\le \frac{1}{P(X_1>\alpha)}
\frac{\int_{-\infty}^{g_a(k,u)}P(B(1)>u-y)F_{S_k}(dy)}{\int_{-\infty}^{g_a(k,u)}P(B(1)>u-y-\alpha)F_{S_k}(dy)}
$$
$$
\le \frac{1}{P(X_1>\alpha)}\max_{y\le g_a(k,u)}\frac{P(B(1)>u-y)}{P(B(1)>u-y-\alpha)}\,.
$$
If $y\le g_a(k,u)$, then $u-y \ge u-g_a(k,u)= a\min\{\log k, \log u\}$. Since $k,u >T$, we obtain
using (\ref{norm}) and elementary computations,
$$
\frac{P(B(1)>u-y)}{P(B(1)>u-y-\alpha)} \le C\exp\left( -\alpha(u-y) +\frac{\alpha^2}{2}\right)
\le Ce^{\alpha^2/2}\exp\left(-\alpha a\min\{\log k, \log u \} \right)\,,
$$
where $C$ is a constant independent of $k$ and $u$. Because $\alpha a >1$, this inequality jointly with
previous ones give us (\ref{est.21}).
\end{proof}

\noindent{\it Step 4.} Define
\beam
\label{def.J}
J(k,u) =e^{-\la}\frac{\la^k}{(k-1)!}\int_0^1P\left(B(t)+S_k>u+\drt\,,\,S_k>g_a(k,u) \right)t^{k-1}dt\,. 
\eeam
The following statement is the main part of our proof.
\begin{proposition}
\label{p:J}
For each $\ep\in(0,1)$ and $D>0$ there are $T_0\in {\bf N}$ and $u_0>0$ such that 
\beam
\label{gamma.k}
\gamma_k(u) := \frac{J(k,u)e^{\la}k!}{P(B(1)+S_k>u\,,\,S_k>g_a(k,u))\la^k} >1-\ep
\eeam
for all $k>T_0$ and $u>u_0$.
\end{proposition}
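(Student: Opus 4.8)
\emph{Plan.} The idea is to read $\gamma_k(u)$ as an average and exploit that the weighting density $k t^{k-1}$ on $[0,1]$ concentrates near $t=1$, where the numerator of $\gamma_k(u)$ reduces to its denominator; the only real issue is to make this quantitative \emph{uniformly in $u$}.

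\emph{Reformulation.} Fix $D>0$ and, for given $k$ and $u$, abbreviate
$$
f(t)=P\bigl(B(t)+S_k>u+\drt\,,\;S_k>g_a(k,u)\bigr),\qquad 0\le t\le 1 .
$$
Multiplying $J(k,u)$ from (\ref{def.J}) by $e^{\la}k!/\la^{k}$ turns the prefactor $e^{-\la}\la^{k}/(k-1)!$ into $k$, so $\gamma_k(u)$ in (\ref{gamma.k}) equals $\bigl(k\int_0^1 f(t)\,t^{k-1}\,dt\bigr)/f(1)$, since $f(1)=P(B(1)+S_k>u,\,S_k>g_a(k,u))$ is precisely the denominator in (\ref{gamma.k}) (note $\sqrt{1-1}=0$). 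If that denominator vanishes the asserted inequality is to be read as the trivial $k\int_0^1 f(t)t^{k-1}dt\ge(1-\ep)f(1)$, so assume $f(1)>0$. Next fix $\ep\in(0,1)$ and set $t_k=1-(\log k)/k$; since $\log(1-x)\le-x$ we get $t_k^{\,k}\le e^{-\log k}=1/k$, hence $k\int_{t_k}^1 t^{k-1}\,dt=1-t_k^{\,k}\ge 1-\ep$ for $k$ large. Thus it suffices to produce $T_0$, depending only on $\ep,D,a$, with
$$
f(t)\ \ge\ (1-\ep)\,f(1)\qquad\text{for all }u>0,\ k\ge T_0,\ t\in[t_k,1],
$$
because then $\gamma_k(u)\ge k\int_{t_k}^1 f(t)t^{k-1}dt/f(1)\ge(1-\ep)(1-t_k^{\,k})\ge(1-\ep)^2$, which gives the proposition after shrinking $\ep$ (any $u_0>0$ works).

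\emph{The crux: the pointwise comparison.} By independence of $B$ and $\{X_k\}$ together with $B(t)\eid\sqrt t\,B(1)$, conditioning on $S_k$ and writing $\ov\Phi(x)=P(B(1)>x)$ gives
$$
f(t)=\int_{\{y>g_a(k,u)\}}\ov\Phi\Bigl(\tfrac{u-y+\drt}{\sqrt t}\Bigr)F_{S_k}(dy),\qquad
f(1)=\int_{\{y>g_a(k,u)\}}\ov\Phi(u-y)\,F_{S_k}(dy),
$$
so it is enough to show $\ov\Phi\bigl((z+\drt)/\sqrt t\bigr)\ge(1-\ep)\ov\Phi(z)$ for every $z<a\log(\min\{k,u\})$ and $t\in[t_k,1]$; indeed on the domain of integration $z=u-y<a\log(\min\{k,u\})\le a\log k$ by (\ref{def.ga}). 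Put $w=(z+\drt)/\sqrt t$. For $t\in[t_k,1]$ (so $1-t\le(\log k)/k$ and $t\ge\tfrac12$) one has $0\le\tfrac1{\sqrt t}-1\le 2(1-t)$ and $0\le\drt/\sqrt t\le 2D\sqrt{1-t}$, whence $w-z=z(\tfrac1{\sqrt t}-1)+\drt/\sqrt t\le\delta_k$ when $z\ge0$, and $w-z\le\delta_k$ also when $z<0$ (the first term being $\le0$ then), where $\delta_k:=2a(\log k)^2/k+2D\sqrt{(\log k)/k}\to0$. Finally, (\ref{norm}) together with positivity and monotonicity of $\ov\Phi$ furnish an absolute constant $C_0$ with $\varphi(s)/\ov\Phi(s)\le C_0(1+s^+)$ for all $s$ ($\varphi$ the standard normal density, $s^+=\max\{s,0\}$); hence, when $w\ge z$,
$$
\log\frac{\ov\Phi(z)}{\ov\Phi(w)}=\int_z^{w}\frac{\varphi(s)}{\ov\Phi(s)}\,ds\ \le\ C_0(w-z)\bigl(1+w^+\bigr)\ \le\ C_0\,\delta_k\bigl(1+a\log k+\delta_k\bigr)\ \longrightarrow\ 0
$$
as $\kto$, uniformly in $z<a\log k$ and in $u$ (the case $w<z$ is trivial, as then $\ov\Phi(w)\ge\ov\Phi(z)$). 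Choosing $T_0$ so that the right-hand side is $\le-\log(1-\ep)$ for $k\ge T_0$ yields the pointwise bound, and integrating it against $F_{S_k}$ over $\{y>g_a(k,u)\}$ gives $f(t)\ge(1-\ep)f(1)$, as needed.

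\emph{Main obstacle.} The difficulty is exactly the uniformity in $u$, and it is handled by the two structural features built into the statement: restricting the event to $\{S_k>g_a(k,u)\}$ forces $u-S_k<a\log(\min\{k,u\})\le a\log k$, a bound free of $u$, which is what tames the term $z(\tfrac1{\sqrt t}-1)$ in $w-z$; and driving $t_k\to1$ at the rate $1-t_k\asymp(\log k)/k$ makes $t_k^{\,k}\to0$ (needed in the reformulation step) and $(\log k)^2(1-t_k)\to0$, i.e. $\delta_k\log k\to0$ (needed in the crux), hold simultaneously — a slower choice such as $1-t_k\asymp1/\log k$ would leave $z(\tfrac1{\sqrt t}-1)$ of order one and collapse the argument. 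Everything else (the Gaussian Mills estimates, the factorial bookkeeping) is routine.
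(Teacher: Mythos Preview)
Your argument is correct and follows the same overall strategy as the paper: restrict the $t$-integral to a neighbourhood of $1$ and exploit the bound $u-y<a\log(\min\{k,u\})\le a\log k$ afforded by the event $\{S_k>g_a(k,u)\}$ to control the Gaussian ratio uniformly. The execution, however, is noticeably leaner than the paper's. The paper truncates at $t\in[1-b/k,1]$ for a fixed $b$ (so that $k\int_{1-b/k}^1 t^{k-1}dt\to 1-e^{-b}$, forcing a further limit $b\to\infty$) and then splits the range of $y$ into three regimes --- $u-y$ large and positive, $u-y$ bounded, $u-y$ negative --- handling each separately via (\ref{norm}). Your choice $t_k=1-(\log k)/k$ dispenses with the extra parameter $b$ since $t_k^{\,k}\le 1/k$, and your single hazard-rate bound $\varphi(s)/\ov\Phi(s)\le C_0(1+s^+)$ replaces the three-case analysis entirely; the point is that $\delta_k\,(1+a\log k)\to 0$, which is exactly the computation the paper carries out piecewise. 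A small bonus of your route is that, because you only ever use $z<a\log k$ (never the sharper $z<a\log(\min\{k,u\})$), your estimate is uniform in $u>0$ and the hypothesis $u>u_0$ is not actually needed, whereas the paper's bound $\xi_b(\min\{k,u\})>1-\delta$ genuinely requires both $k$ and $u$ large.
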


\begin{proof}
We can write
$$
J(k,u)=e^{-\la}\frac{\la^k}{(k-1)!}\int_{g_a(k,u)}^\infty\int_0^1 
P\left(B(t)>u-y +\drt \right)t^{k-1}dtF_{S_k}(dy)
$$
and
$$
P(B(1)+S_k>u\,,\,S_k>g_a(k,u)) =\int_{g_a(k,u)}^\infty P\left(B(t)>u-y  \right)F_{S_k}(dy)\,,
$$
which yields that
\beam
\label{g.l}
\gamma_k(u) \ge k\min_{y> g_a(k,u)}\frac{\int_0^1P\left(B(t)>u-y +\drt \right)t^{k-1}dt}{P(B(1)>u-y)}\,.
\eeam

We estimate the expression in the right hand side dividing the area $[g_a(k,u), \infty)$  into three parts:
$[g_a(k,u), u-\beta),\,[u-\beta, u+\beta_1)$ and $[u+\beta_1,\infty)$, where positive constants
$\beta$ and $\beta_1$ will be choosen later. We also denote by $\gamma_k^{(1)}(u),\,
\gamma_k^{(2)}(u)$ and $\gamma_k^{(3)}(u)$ the minima over these parts correspondingly.

\noindent{\it Case 1: $g_a(k,u)\le y < u-\beta$.} We assume $\beta >1$. Estimate (\ref{norm}) implies that
$$
\nu(t, u-y) := \frac{P\left(B(t)>u-y +\drt \right)}{P(B(1)>u-y)}
$$
$$
\ge \frac{\beta}{\beta +D}\left(1 - \frac1{\beta^2}\right)\exp\left(\frac{(u-y)^2}2 - \frac{(u-y+\drt)^2}{2t} \right)
$$
$$
= \frac{\beta^2-1}{\beta(\beta+D)}\exp\left(-\frac{(u-y)^2(1-t)}{2t} - \frac{(u-y)\drt}{t} -
\frac{D^2(1-t)}{2t} \right)\,.
$$
Fix $b>0$. If $1 -\frac{b}{k}<t<1$, then
$$
\nu(t, u-y) \ge \frac{\beta^2-1}{\beta(\beta+D)}\exp\left(-\frac{(u-y)^2b}{2(k-b)} - \frac{D(u-y)\sqrt{bk}}{k-b} -
\frac{D^2b}{2(k-b)} \right)\,.
$$
Denoting
\beam
\label{def.xi}
\xi_b(x) = \exp\left(-\frac{a^2b(\log x)^2}{2(x-b)} - \frac{a\sqrt bD\sqrt x\log x}{x-b} -
\frac{bD^2}{2(x-b)} \right)\,
\eeam
and taking into account that $\beta < u-y < a\log(\min\{k,u\})$, we obtain
$$
\nu(t, u-y) \ge \frac{\beta^2-1}{\beta(\beta+D)}\xi_b\left( \min\{k,u\}\right)\,.
$$
Restriction of the area of integration in (\ref{g.l}) to $1 -\frac{b}{k}\le t <1$ yields
\beam
\label{gk.1}
\gamma_k^{(1)}(u)\ge \left[1 - \left(1-\frac{b}{k} \right)^k \right]
\frac{\beta^2-1}{\beta(\beta+D)}\xi_b\left( \min\{k,u\}\right)\,.
\eeam

\noindent{\it Case 2: $u-\beta \le y < u+\beta_1$}. Now for $1 -\frac{b}{k}\le t <1$ 
\beam
\label{def.chi}
 \left|\frac{u-y +\drt}{\sqrt{t}} -(u-y)\right| \le \max\{\beta\,,\,\beta_1 \}\frac{b}{2k\left(1-\frac{b}{k}\right)^{3/2}}
+D\sqrt{\frac{b}{k}} :=\chi_b(k) 
\eeam
Hence 
$$
\nu(t, u-y) \ge \frac{P\left(B(1)>u-y +\chi_b(k) \right)}{P(B(1)>u-y)}
= 1- \frac{\int_{u-y}^{u-y+\chi_b(k)}e^{-t^2/2}dt}{\int_{u-y}^{\infty}e^{-t^2/2}dt}
$$
$$
\ge 1 - \frac{\chi_b(k)}{\int_{\beta}^{\infty}e^{-t^2/2}dt} = 1 - \frac{\chi_b(k)}{\sqrt{2\pi}P(B(1)>\beta)}\,.
$$
because $u-y<\beta$. From here, as above
\beam
\label{gk.2}
\gamma_k^{(2)}(u)\ge \left[1 - \left(1-\frac{b}{k} \right)^k \right]
\left(1 - \frac{\chi_b(k)}{\sqrt{2\pi}P(B(1)>\beta)} \right)\,.
\eeam

\noindent{\it Case 3:  $y \ge u+\beta_1$}. Now
$$
\gamma_k^{(3)}(u) \ge k\int_0^1P(B(t)>-\beta_1+D)t^{k-1}dt\,.
$$
Choose $\beta_1>D$. Then
$$
P(B(t)>-\beta_1+D)= P\left(B(1) > \frac{-\beta_1+D}{\sqrt t}\right)
> P(B(1)>-\beta_1+D)
$$
for all $0<t<1$. Hence
\beam
\label{gk.3}
\gamma_k^{(3)}(u) \ge P(B(1)>-\beta_1+D)\,.
\eeam

Now we are able to finish the proof of the proposition. 
Fix $\delta\in (0,1)$ and choose  $b>0$ such that $e^{-b}< \delta$, and
$k_0\in {\bf N}$ for which
$$
1-\left(1-\frac{b}{k}\right)^k > (1-\delta)^2 \quad\mbox{for $k>k_0$}\,.
$$
Choose now $\beta$ under the condition
$$
\frac{\beta^2}{\beta(\beta+D)} >1-\delta \,.
$$  
According to (\ref{def.xi}), $\xi_b(x)\to 1$ as $x\to\infty$ for each $b>0$. Hence, for choosen $b$ there is $u_0>0$ 
such that $\xi_b(\min\{k,u\})>1-\delta$ if $\min\{k,u\}>u_0$. So, (\ref{gk.1}) implies 
$$
\gamma_k^{(1)}(u) > (1-\delta)^4\quad \mbox{for $k>\max\{k_0, u_0\}$ and $u>u_0$}\,.
$$
Further, (\ref{gk.3}) allows us to find $\beta_1$ such that
$\gamma_k^{(3)}(u)> (1-\delta)$.
According to (\ref{gk.2}) and (\ref{def.chi}) for choosen $b$, $\beta$ and $\beta_1$ there exists  
$k_1>k_0$ such that $\gamma_k^{(2)}(u) > (1-\delta)^3$ for all $k>k_1$. Finally,
$$
\gamma_k(u) \ge \min\left\{\gamma_k^{(1)}(u)\,,\,\gamma_k^{(2)}(u)\,,\,\gamma_k^{(3)}(u) \right\}
>(1-\delta)^4
$$
for $k>T_0=\max\{k_1,u_0\}$ and $u>u_0$, and the needed statement follows. 
\end{proof}
\begin{corollary}
\label{c:H2}
Let
\beam
\label{H.22}
H_{22}(u) = \sum_{k=T+1}^{[au]}J(k,u)\,.
\eeam
For each $\ep>0$ there are $T_0\in \bf N$ and $u_0>T_0/a$ such that
$$
\frac{H_{22}(u)}{G_{22}(u)} >1-\ep
$$
for all $u>u_0$ and $T_0\le T < [au]$.
\end{corollary}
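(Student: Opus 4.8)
The plan is to observe that $H_{22}(u)/G_{22}(u)$ is simply a weighted average of the quantities $\gamma_k(u)$ from Proposition \ref{p:J}, so that the corollary drops out of the uniform lower bound $\gamma_k(u)>1-\ep$ established there, applied with the running $T$ in the role of its $T_0$.

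First I would read off, straight from the definitions (\ref{def.J}) and (\ref{gamma.k}), the identity
\[
J(k,u)=\gamma_k(u)\,e^{-\la}\,\frac{\la^k}{k!}\,P\bigl(B(1)+S_k>u\,,\,S_k>g_a(k,u)\bigr)\,.
\]
Next I would identify the denominator: subtracting (\ref{G21}) from $G_2(u)$ and using (\ref{def.I}) gives
\[
G_{22}(u)=e^{-\la}\sum_{k=T+1}^{[au]}\frac{\la^k}{k!}\,P\bigl(B(1)+S_k>u\,,\,S_k>g_a(k,u)\bigr)\,,
\]
which is the same sum as $H_{22}(u)=\sum_{k=T+1}^{[au]}J(k,u)$ but with every factor $\gamma_k(u)$ replaced by $1$. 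Since all the summands $\frac{\la^k}{k!}P(B(1)+S_k>u,\,S_k>g_a(k,u))$ are nonnegative, this already yields
\[
H_{22}(u)\ \ge\ \Bigl(\min_{T+1\le k\le[au]}\gamma_k(u)\Bigr)\,G_{22}(u)\,.
\]

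Then I would invoke Proposition \ref{p:J}: for the prescribed $\ep$ (and the constant $D$ fixed earlier) it supplies $T_0$ and $u_0$ with $\gamma_k(u)>1-\ep$ whenever $k>T_0$ and $u>u_0$. If $T_0\le T<[au]$ and $u>u_0$, every index $k$ in the sums above exceeds $T_0$, so the minimum there is $>1-\ep$, whence $H_{22}(u)/G_{22}(u)>1-\ep$. To complete matters I would enlarge $u_0$ so that $u_0>T_0/a$, which both makes the claim non-vacuous and keeps the index range nonempty, and note that $G_{22}(u)>0$ for such $u$ once $T_0$ is large: by (\ref{G2}) one has $G_2(u)/P(X(1)>u)\to1$, while by Proposition \ref{p:21} the ratio $G_{21}(u)/P(X(1)>u)$ is at most a multiple of $T^{1-\alpha a}$ with $\alpha a>1$, hence tends to $0$ as $T\to\infty$, so $G_{22}(u)=G_2(u)-G_{21}(u)>0$.

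I do not expect a genuine obstacle here: once Proposition \ref{p:J} is in hand the rest is bookkeeping, and the only points needing a little care are the two side conditions in the statement, namely arranging $u_0>T_0/a$ (so the assertion is non-vacuous) and securing $G_{22}(u)>0$ (so the quotient is meaningful), both of which are dispatched by taking $T_0$, and then $u_0$, large enough.
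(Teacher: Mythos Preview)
Your argument is correct and is exactly the intended derivation: the paper states Corollary~\ref{c:H2} immediately after Proposition~\ref{p:J} without a separate proof, and your computation that $H_{22}(u)$ and $G_{22}(u)$ are the same weighted sum except for the factors $\gamma_k(u)$, so that $H_{22}(u)/G_{22}(u)\ge\min_{T<k\le[au]}\gamma_k(u)>1-\ep$ for $T\ge T_0$ and $u>u_0$, is precisely how the corollary is meant to follow.
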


\noindent{\it Step 5.} Now we can proof (\ref{inf.H}). Indeed,
(\ref{tail.rep}) allows us to write
$$
\frac{H(u)}{P(X(1)>u)} \ge \frac{H_{22}(u)}{P(X(1)>u)}
= \frac{H_{22}(u)}{G_{22}(u)}\times\frac{1}{\frac{G_{21}(u)}{G_{22}(u)}+1}
\times\frac{1}{\frac{e^{-\lambda}P(B(1)>u)+G_1(u)}{G_2(u)}+1 +\frac{G_3(u)}{G_2(u)}}\,.
$$
Fix $\ep\in(0,1)$. Applying Corollary \ref{c:H2} we see that the first fraction in the right hand side
is greater then $1-\ep$ for $u>u_0$ and $T_0\le T <[au]$, where $u_0$ and $T_0$ are constants. 
 It follows from (\ref{est.21}), (\ref{G21}) and (\ref{G2})
that there is $T_1\in \bf N$ such that $G_{21}(u)/G_{22}(u)<\ep$ for $u>T_1$. Choose now $T>\max\{T_0, T_1\}$.
For such $T$, according to (\ref{G1}) and (\ref{G3}), there is $u_1>0,\, au_1 > T$, such that 
$[e^{-\lambda}P(B(1)>u)+G_{1}(u)]/G_{2}(u)<\ep$
and $G_{3}(u)/G_{2}(u)<\ep$ for $u>u_1$. So, 
$$
\frac{H(u)}{P(X(1)>u)} > \frac{1-\ep}{(1+\ep)(1+2\ep)}
$$
for $u>\max\{T,\,u_0,\,u_1\}$. Letting $u\to\infty$ and then $\ep\to0$ we get (\ref{inf.H}).   \qed

\medskip

\noindent{\bf Proof of (\ref{lim.P}).} Put
\beam
\label{Q.til}
\widetilde{Q}(u) = P\left(B(1)+Z(1)>u+b\Gamma_{\tau}\,, \Gamma_{\tau}\le a_{\tau}\right),
\eeam
$$
R(u) = P\left(B(1)+Z(1)>u+b\Gamma_{\tau})\right) - \widetilde{Q}(u), 
$$
where the numbers $a_k$ are given by (\ref{def.ak}).

Let $Q(u)$ be given by (\ref{def.Q}).
Denote $C= \lim_{u\to-\infty}{Q}(u)$ and $F(u)= 1 -C^{-1}{Q}(u)$. 
Let $W$ be a random variable with distribution
function $F$, independent of $B(1)$. It can be easily verified, using formulas for $\Gamma$-densities, that
$P(B(1)+W>u) = C^{-1}\widetilde{Q}(u)$. Now  (\ref{lim.Q}) and Lemma \ref{l:comp} imply that
\beam
\label{est.Q.til}
\widetilde{Q}(u) = o\left( P(B(1)+Z(1)>u+b)\right)
\eeam
as $u\to\infty$. 

Next we show that
\beam
\label{est.R}
\limsup_{u\to\infty}\frac{R(u)}{P(B(1)+Z(1)>u+b)} \le 1.
\eeam
Fix a constant $a>0$  and denote
\beam
\label{def.R12}
R_1(u) = P\left(B(1)+Z(1)>u+b\Gamma_{\tau}\,,\, \Gamma_{\tau}> a_\tau\,,\, S_{\tau}
> u -a\log\tau \right),
\eeam
$$
R_2(u) = R(u)-R_1(u).
$$

\noindent{\it Estimate for $R_1(u)$.} We show here that
\beam
\label{est.R1}
\limsup_{u\to\infty}\frac{R_1(u)}{P(B(1)+Z(1)>u+b)} \le 1.
\eeam
As above, one can easily check the formula
\beam
\label{rep.R1}
R_1(u) = \la e^{-\la}\sum_{k=1}^\infty\int_{a_k}^1\frac{(\la t)^{k-1}}{(k-1)!}P(B(1)+S_k>u+bt\,,\,
S_k > u -a\log k )dt\,,
\eeam
and it is clear that
\beam
\label{tail.X}
P(B(1)+Z(1)) = e^{-\la}P(B(1)>u) + e^{-\la}\sum_{k=1}^\infty\frac{\la^k}{k!}P(B(1)+S_k>u)\,.
\eeam

Denote
$$
\gamma_k(u) = \frac{k\int_{a_k}^1P(B(1)+S_k>u+bt\,,\,
S_k > u -a\log k )t^{k-1}dt}{P(B(1)+S_k>u+b)}\,.
$$
Representing the probabilities in this fraction as integrals with respect to $F_{S_k}$, we obtain
the estimate
\beam
\label{gamma.u1} 
\gamma_k(u) \le \sup_{y>u-a\log k}\frac{P(B(1)> u+a_kb-y)}{P(B(1)> u+b-y)}\,.
\eeam

\noindent{\it Step 1.} Fix $\ep>0$ and choose $A>0$ such that
$$
\frac{x^{-1}}{x^{-1}-x^{-2}} = \frac{x}{x-1} < 1+\ep
$$ 
for $x>A$. Then, using (\ref{norm}) we see that if $y<u-A$, then
$$
\frac{P(B(1)> u+a_kb-y)}{P(B(1)> u+b-y)}\le (1+\ep)\exp\left(-\frac{(u+a_kb-y)^2}2 + \frac{(u+b-y)^2}2\right)
$$
$$
=(1+\ep)\exp\left(b(1-a_k)(u-y)+ \frac{b^2(1-a_k^2)}2 \right) 
$$
$$
\le (1+\ep)\exp\left(b(1-a_k)a\log k+ \frac{b^2(1-a_k^2)}2 \right)\,, 
$$
because $u-y < a\log k$. Taking into account that $1-a_k \le mk^{-1}\log k$, we conclude that there is 
an index $k_0$ such that 
\beam
\label{gamma.1}
\gamma_k^{(1)} := \sup_{u-a\log k<y<u-A}\frac{P(B(1)> u+a_kb-y)}{P(B(1)> u+b-y)} \le (1+\ep)^2 \,. 
\eeam

\noindent{\it Step 2.} 
If $y>u-A$, then 
$$
\frac{P(B(1)> u+a_kb-y)}{P(B(1)> u+b-y)} = 1 + 
\frac{\int_{u+a_kb-y}^{u+b-y}e^{-x^2/2}dx }{\int_{u+b-y}^\infty e^{-x^2/2}dx}  
\le \frac{b(1-a_k)}{\int_{A+b}^\infty e^{-x^2/2}dx }\,.
$$ 
So, we can find $k_1$ such that for choosen $A$ the last expression is less that $1+\ep$
for $k>k_1$. Therefore, for given $\ep>0$ there is an index $k_2$ such that
$\gamma_k(u) <(1+\ep)^2$ for all $k>k_2$ and $u>0$.  
 
Denote by $\widetilde R_1(u)$ the sum of summands from (\ref{rep.R1}) over $k>k_2$.  Then
\beam
\label{R1.t}
\limsup_{u\to\infty}\frac{\widetilde R_1(u)}{P(B(1)+Z(1)>u+b)} \le (1+\ep)^2.
\eeam
We have 
$$
 R_1(u) - \widetilde R_1(u) \le e^{-\la}\sum_{k=1}^{k_2}\frac{\la^k}{k!}P(B(1)+S_k>u)\,.
$$ 
On the other hand, for each $j$ 
 $$
 P(B(1)+Z(1)> u+b) > e^{-\la}\frac{\la^{k_2+j+1}}{(k_2+j+1)!}P(B(1)+S_{k_2+j+1}>u+b)
 $$
 $$
 \ge e^{-\la}\frac{\la^{k_2+j+1}}{(k_2+j+1)!}P(B(1)+S_{k_2+1}>u)P(S_j>b). 
 $$
 Choosing $j$ under the condition $P(S_j>b)>0$ and applying Lemma \ref{l:Y}
 we conclude that 
$$
\lim_{u\to\infty}\frac{R_1(u) - \widetilde R_1(u) }{P(B(1)+Z(1)>u+b)} = 0.
$$
Now (\ref{est.R1}) follows  from here and (\ref{R1.t}). 

\noindent{\it Estimate for $R_2(u)$.} We show here that
\beam
\label{est.R2}
\lim_{u\to\infty}\frac{R_2(u)}{P(B(1)+Z(1)>u+b)}=0.
\eeam

The probability $R_2(u)$ admittes a representation similar to (\ref{rep.R1}). Denoting by $g_k(u)$
the corresponding summands we get
$$
g_k(u) \le e^{-\la}\frac{\la^k}{k!}P(B(1)+S_k>u\,,\, S_k\le u-a\log k)\,.
$$
Hence
$$
\delta_k(u) := \frac{g_k(u)}{e^{-\la}\frac{\la^{k+m}}{(k+m)!}P(B(1)+S_{k+m}>u+b)}
$$
$$
\le \frac{(k+1)\cdots(k+m)}{\la^m}\frac{P(B(1)+S_k>u\,,\, S_k\le u-a\log u)}
{P(B(1)+S_k>u-1)P(S_m>b+1)}\,,
$$
where $m$ is choosen under the condition $P(S_m>b+1)>0$.
Once again representing the probabilities as integrals with 
respect to $F_{S_k}$ and using (\ref{norm}) we can find an index $k_1$ such that  
$$
\frac{P(B(1)+S_k>u\,,\, S_k\le u-a\log u)}{P(B(1)+S_k>u-1)}
\le 2\sup_{y\le u-a\log k} \exp\left(-(u-y) +\frac14 \right)
$$
$$
\le 2e^{\frac14}\exp(-a\log k) = 2e^{\frac14}k^{-a}\,
$$ 
for $k>k_1$ and $u>0$, which yields $\delta_k(u) \le Ck^{m-a}$. So,
$$
\frac{\sum_{k=k_k+1}^\infty g_k(u) }{P(B(1)+Z(1)>u+b)} \le Ck_1^{m-a}. 
$$
As in previous case,
$$
\lim_{u\to\infty}\frac{\sum_{k=1}^{k_1}g_k(u) }{P(B(1)+Z(1)>u+b)} = 0\,.
$$
Therefore,
$$
\limsup_{u\to\infty}\frac{R_2(u)}{P(B(1)+Z(1)>u+b)}\le Ck_1^{m-a}\,.
$$
Since $m$ does not depend on $a$, we may choose $a>m$. Then letting $k_1\to\infty$ we obtain (\ref{est.R2}).
Now (\ref{est.R}) follows from (\ref{def.R12}), (\ref{est.R1}) and (\ref{est.R2}). 

According (\ref{Q.til})--(\ref{est.R})
$$
\limsup_{u\to\infty}\frac{P(B(1)+Z(1)>u+b\gt)}{P(B(1)+Z(1)>u+b)}\le 1\,.
$$
Since $P(B(1)+Z(1)>u+b\gt)\ge P(B(1)+Z(1)>u+b)$, (\ref{lim.P}) follows. \qed
\subsection{Proof of (\ref{est.A}).} Because
\label{ss:A}
$$
\sup_{0\le t< \gt} X(t) \le \sup_{0\le t\le 1}B(t) +\sup_{0\le t<\gt}\left[Z(t)-bt \right]\,,
$$
we get, once again applying  L\'evy formula (\ref{sup.B}) and Lemma \ref{l:sym},    
 $$
 A(u) \le 2P\left(B(1) + \sup_{0\le t<\gt}\left[Z(t)-bt \right]>u\right)\,.
 $$
It was shown in \cite{bib:4}, pp. 149--151, that
$$
P\left( \sup_{0\le t<\gt}\left[Z(t)-bt \right]>u\right) = o\left(P(Z(\gt)-b\gt >u) \right)
$$ 
as $u\to\infty$. Hence, according to Lemma \ref{l:comp} 
$A(u) = o\left(P(B(1)+ Z(\gt)-b\gt >u) \right)$. Since $Z(\gt) = Z(1)$,  
(\ref{lim.P}) yields now (\ref{est.A}). \qed

\section{Proof of Theorem \ref{t:pos}.}

As above, (\ref{rho.2}) and Lemma \ref{l:gen} allow us to prove the theorem
for processes of the type (\ref{def.X}). Relation (\ref{maineq}) holds for compound Poisson processes 
with non-negative drifts and light tails (see Theorem 1 from \cite{bib:4}). Hence we may assume that $b>0$.  
 The proof is divided into  a series of lemmas.

\begin{lemma}
\label{l:G}
Assume (\ref{cond.pl}) holds for iid random variables $X_k$ and put
\beam
\label{def.G}
G(u) = \sum_{k=2}^\infty\frac{\la^k P(S_{k-1}>u)}{k!}\,.
\eeam
Let 
\beam
\label{comp.Poi}
Z= \sum_{k=1}^NX_k\,.
\eeam
where $N$ is a Poisson random variable with parameter $\la$, independent
of $X_k$. Then 
$$
\lim_{u\to\infty}\frac{G(u)}{P(Z>u+b)}=0
$$
for each $b>0$.
\end{lemma}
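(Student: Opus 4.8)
The plan is to compare $G$ with $P(Z>u+b)$ at the level of their Poisson series. Writing $Z=Z(1)$ for the compound Poisson variable of (\ref{comp.P}) and reindexing $j=k-1$, we have $G(u)=\sum_{j\ge1}\frac{\lambda^{j+1}}{(j+1)!}\,P(S_j>u)$, while $P(Z>u+b)=e^{-\lambda}\sum_{j\ge1}\frac{\lambda^j}{j!}\,P(S_j>u+b)$ (with $S_0=0$, so the $j=0$ term vanishes for $u>0$). Two ingredients carry the argument. First, $P(X_1>b)>0$ — this is part of the hypothesis (\ref{cond.pl}) — hence $P(S_{i+1}>u+b)\ge P(S_i>u)P(X_1>b)$ for every $i$, which yields the family of lower bounds $P(Z>u+b)\ge e^{-\lambda}\frac{\lambda^{i+1}}{(i+1)!}P(X_1>b)\,P(S_i>u)$, one for each $i$. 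Second, since (\ref{cond.pl}) implies (\ref{light1}), relation (\ref{Sk.0}) is available: $P(S_k>u)/P(S_{k+1}>u)\to0$ as $u\to\infty$ for each fixed $k$. It is also worth recording the identity $G(u)=\lambda\int_0^1 e^{\lambda t}P(Z(t)>u)\,dt$ (use $1/(j+1)=\int_0^1 t^j\,dt$), which makes transparent where the smallness comes from: the $j$-th summand of $G$ carries the extra weight $1/(j+1)$.

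Fix $M\in{\mathbf N}$ and split $G=G_M+G^{(M)}$, where $G_M(u)=\sum_{j=1}^{M}\frac{\lambda^{j+1}}{(j+1)!}P(S_j>u)$. For each fixed $j\le M$, the lower bound above with $i=j+1$ gives
$$
\frac{\frac{\lambda^{j+1}}{(j+1)!}P(S_j>u)}{P(Z>u+b)}\ \le\ \frac{(j+2)\,e^{\lambda}}{\lambda\,P(X_1>b)}\cdot\frac{P(S_j>u)}{P(S_{j+1}>u)}\ \longrightarrow\ 0\qquad(u\to\infty)
$$
by (\ref{Sk.0}); summing the finitely many terms, $G_M(u)/P(Z>u+b)\to0$ for every fixed $M$ (in case (\ref{light2}) this is trivial, since then $S_j\le Aj$ forces $P(S_j>u)=0$ for $j\le M$ once $u$ is large). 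It therefore suffices to prove $\limsup_{u\to\infty}G^{(M)}(u)/P(Z>u+b)\le\varepsilon(M)$ with $\varepsilon(M)\to0$, since combining this with the previous display and letting $M\to\infty$ gives the lemma.

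For $G^{(M)}$ a termwise comparison against $e^{\lambda}P(Z>u+b)=\sum_j\frac{\lambda^j}{j!}P(S_j>u+b)$ fails: matching the $j$-th terms produces the factor $\frac{\lambda}{j+1}\cdot\frac{P(S_j>u)}{P(S_j>u+b)}$, and although $\frac{\lambda}{j+1}$ is small for large $j$, the ratio $P(S_j>u)/P(S_j>u+b)$ is not bounded uniformly in $(j,u)$ — it blows up on the small-$j$ indices. I therefore split $\{j>M\}$ at two $u$-dependent levels. For the far tail $j>Cu$ I bound $P(S_j>u)\le1$, so this part is at most $2\lambda^{\lceil Cu\rceil+1}/(\lceil Cu\rceil+1)!$; choosing $\alpha'$ with $P(X_1>\alpha')>0$ and using $P(Z>u+b)\ge e^{-\lambda}\frac{\lambda^{n}}{n!}P(X_1>\alpha')^{n}$ with $n=\lceil(u+b)/\alpha'\rceil$, any $C>1/\alpha'$ makes the numerator factorial dominate, so this part is $o(P(Z>u+b))$. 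For the near-origin band $M<j\le M'(u)$ with $M'(u)\to\infty$ slowly, the union bound $P(S_j>u)\le jP(X_1>u/j)$ gives a contribution $\le\lambda e^{\lambda}P(X_1>u/M'(u))$, which must be shown negligible against $P(Z>u+b)$ by choosing $M'(u)$ slow enough and testing $P(Z>u+b)$ against a term $e^{-\lambda}\frac{\lambda^{n}}{n!}P(X_1>v)^{m}P(X_1\ge0)^{M'(u)}$ with $v=v(u)\to\infty$ and $m=\lceil(u+b)/v\rceil$ suitably chosen. Finally, for the bulk $M'(u)<j\le Cu$ one uses $\frac{\lambda}{j+1}\le\frac{\lambda}{M'(u)}\to0$ together with $P(S_j>u)=P(S_j>u+b)+P(u<S_j\le u+b)$: the first piece sums to at most $\frac{\lambda e^{\lambda}}{M'(u)}P(Z>u+b)$, and the overshoot piece $\sum_{M'(u)<j\le Cu}\frac{\lambda^{j+1}}{(j+1)!}P(u<S_j\le u+b)$ remains to be controlled.

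The hard part — and the place where real analysis is needed — is that last estimate, together with the delicate choice of $v(u)$ in the near-origin band: one must know that, over the band of indices that actually carries the mass of $\sum_j\frac{\lambda^j}{j!}P(S_j>u)$ and of $\sum_j\frac{\lambda^j}{j!}P(S_j>u+b)$, these two series concentrate around the same index $j^{\ast}(u)$, which tends to infinity (and is of order $u$ divided by a slowly varying function), while the ratio of their masses grows only sub-polynomially in $u$, so that the gain $1/(j+1)$ at $j\approx j^{\ast}(u)$ beats it — equivalently, the overshoot-slab probabilities are not asymptotically larger (up to $u^{o(1)}$) than $P(S_j>u+b)$ uniformly over the bulk. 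This is a Cramér/saddle-point type statement, and I expect establishing it — or replacing it by a sufficiently careful multi-scale truncation that sidesteps it — to be the main obstacle; the reduction in the first two paragraphs and the far-tail estimate are routine.
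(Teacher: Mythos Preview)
Your handling of the finite truncation $G_M$ is correct and matches one ingredient of the paper's argument. But the proposal for the tail $G^{(M)}$ is explicitly unfinished: you describe a three-level $u$-dependent splitting and concede that the bulk and near-origin bands would require a Cram\'er/saddle-point estimate you do not supply. That route is unnecessary, and the gap is that you are missing a much simpler device.

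The paper avoids the problematic ratio $P(S_j>u)/P(S_j>u+b)$ altogether by decomposing according to the \emph{second-to-last} partial sum. For fixed $A>0$ write
\[
P(S_{k-1}>u)\ \le\ P(S_{k-1}>u,\ S_{k-2}\le u-A)\ +\ P(S_{k-2}>u-A).
\]
On the first event, conditionally on $S_{k-2}=y\le u-A$ one needs $X_{k-1}>u-y$ with $u-y\ge A$; compare this with $P(X_{k-1}+X_k>u-y+b)$. Since (\ref{cond.pl}) gives $P(X_1>v)/P(X_1+X_2>v+b)\to0$ as $v\to\infty$, this ratio is below $\varepsilon$ once $A$ is large, \emph{uniformly in $k$ and $u$}. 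Summing yields
\[
\sum_{k\ge2}\frac{\lambda^k}{k!}P(S_{k-1}>u,\,S_{k-2}\le u-A)\ \le\ \varepsilon\sum_{k\ge2}\frac{\lambda^k}{k!}P(S_k>u+b)\ \le\ \varepsilon\,e^{\lambda}P(Z>u+b).
\]
For the second piece, use $P(S_{k-2}>u-A)\le P(S_{k-1}>u+b)/P(X_1>A+b)$, so that $\sum_k\frac{\lambda^k}{k!}P(S_{k-2}>u-A)\le C_A\sum_k\frac{\lambda^k}{k!}P(S_{k-1}>u+b)$. Now your own split-at-$M$ argument applies cleanly to this last sum: the finite part is $o(P(Z>u+b))$ by (\ref{Sk.0}), while for $k>M$ the factor $\lambda/k\le\lambda/(M+1)$ bounds the tail by $\frac{\lambda}{M+1}\,e^{\lambda}P(Z>u+b)$. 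Letting $u\to\infty$, then $M\to\infty$, then $\varepsilon\to0$ finishes the proof --- no saddle-point analysis and no $u$-dependent cutoffs are needed. The moral is that the correct comparison is not $P(S_{k-1}>u)$ against $P(S_{k-1}>u+b)$ at the same index, but against $P(S_k>u+b)$ one index higher, where the light-tail hypothesis does the work uniformly.
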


\begin{proof}
Because $P(X_1+X_2>u+b) \ge P(X_1>u-a)P(X_2>a+b)$, relation (\ref{cond.pl}) implies
\beam
\label{S2.b}
\lim_{u\to\infty}\frac{P(X_1>u)}{P(X_1+X_2>u+b)}=0\,.
\eeam
Fix a constant $A>0$. Then
\beam
G(u) \le \la P(X_1>u) + \sum_{k=2}^\infty\frac{\la^k P(S_{k-1}>u\,,\,S_{k-2}\le u-A)}{k!}
\eeam
$$
+ \sum_{k=2}^\infty\frac{\la^k P(S_{k-2}>u-A)}{k!} := \la P(X_1>u) +G_1(u) +G_2(u)\,.
$$
Further, 
\label{G.first}
$$
h(A,k,u) := \frac{P(S_{k-1}>u\,,\,S_{k-2}\le u-A)}{P(S_k>u+b)} 
$$
$$
\le \frac{\int_{-\infty}^{u-A}P(X_1>u-y)F_{S_{k-2}}(dy)}{\int_{-\infty}^{u-A}P(X_1+X_2>u-y+b)F_{S_{k-2}}(dy)}
\le \sup_{y\le u-A}\frac{P(X_1>u-y)}{P(X_1+X_2>u-y+b)}\,,
$$
and it follows from (\ref{S2.b}), that for a fixed $\ep>0$ one can find $A$ such that $h(A,k,u) <\ep$
for all $k\ge 3$ and positive $u$. This yields the estimate $G_1(u) < \ep P(Z>u+b)\,,\,u>0$. 

Turn now to $G_2(u)$. We have for a fixed index $M>2$ 
$$
G_2(u) \le \frac{1}{P(X_1>A+b)}\left( \sum_{k=2}^M \frac{\la^k P(S_{k-1}>u+b)}{k!}
+ \sum_{k=M+1}^\infty\frac{\la^k P(S_{k-1}>u+b)}{k!} \right)\,,
$$
and (\ref{Sk.0}) implies that the first sum is $o(P(Z>u+b))$ as $u\to\infty$. The second sum is
bounded from above by
$$
\frac{1}{M+1}\sum_{k=M+1}^\infty\frac{\la^k P(S_{k-1}>u+b)}{(k-1)!} \le \frac{\la e^\la}{M+1}P(Z>u+b)\,.
$$
So, letting first $u\to\infty$ and then $M\to\infty$ we conclude that $G_2(u) = o(P(Z>u+b))$ as $u\to\infty$
for each $A>0$.
From here and the previous
$$
\limsup_{u\to\infty}\frac{G(u)}{P(Z>u+b)} \le \ep
$$
for each $\ep>0$, which yields the lemma. 
\end{proof}

Because the function $h$ is increasing and continuous, there exists the inverse function $h^{-1}$.
Put for $s>1$
\beam
\label{def.psi}
\psi(s) = h^{-1}\left(\frac4b\log s \right)
\eeam
and
\beam
\label{def.g}
g(k,u) = u - \psi(k)\,.
\eeam

\begin{lemma}
\label{l:gamma}
There is an index $k_0$ such that
$$
\gamma(k,u):= \frac{\frac{\la^k}{k!}P(S_k>u+ba_k\,,\, S_{k-1}\le g(k,u)) }{\frac{\la^{k+1}}{(k+1)!}P(S_{k+1}>u+b)}
\le \frac{C}{\la k}
$$
for all $k>k_0$ and $u>0$, where the constant $C$ is independent  of these parameters.
\end{lemma}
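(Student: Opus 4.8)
The plan is to strip off the factorials, reduce $\gamma(k,u)$ to a one-dimensional tail comparison by conditioning on $S_{k-1}$, and then exploit the precise choice of $\psi$. Cancelling the prefactors gives $\gamma(k,u)=\frac{k+1}{\la}\,\rho_k(u)$ with $\rho_k(u)=\frac{P(S_k>u+ba_k,\,S_{k-1}\le g(k,u))}{P(S_{k+1}>u+b)}$, so (since $k+1\le 2k$) it suffices to find a constant $C'$ with $\rho_k(u)\le C'/k^2$ for $k>k_0$ and all $u>0$; then $C=2C'$ works. Writing $S_k=S_{k-1}+X_k$ and $S_{k+1}=S_{k-1}+X_k+X_{k+1}$ and conditioning on $S_{k-1}$ (independent of $X_k,X_{k+1}$, all $X_j$ i.i.d.), the numerator of $\rho_k(u)$ equals $\int_{(-\infty,g(k,u)]}P(X_1>u+ba_k-y)\,F_{S_{k-1}}(dy)$, while $P(S_{k+1}>u+b)=\int_{\mathbf{R}}P(X_1+X_2>u+b-y)\,F_{S_{k-1}}(dy)\ge\int_{(-\infty,g(k,u)]}P(X_1+X_2>u+b-y)\,F_{S_{k-1}}(dy)$. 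Since all integrands are positive, $\rho_k(u)\le\sup_{y\le g(k,u)}\frac{P(X_1>u+ba_k-y)}{P(X_1+X_2>u+b-y)}$, and substituting $w=u-y$, which satisfies $w\ge\psi(k)$ because $y\le g(k,u)=u-\psi(k)$, we obtain $\rho_k(u)\le\sup_{w\ge\psi(k)}\frac{P(X_1>w+ba_k)}{P(X_1+X_2>w+b)}$ uniformly in $u>0$.

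The decisive step is to lower-bound the denominator by an \emph{uneven} split of the event $\{X_1+X_2>w+b\}$. Since $X_1,X_2$ are independent and $(w+ba_k-b/2)+(b/2+b(1-a_k))=w+b$,
\[
P(X_1+X_2>w+b)\ \ge\ P\!\left(X_1>w+ba_k-b/2\right)\,P\!\left(X_2>b/2+b(1-a_k)\right).
\]
Choosing $k_0$ so large that $a_k>1/2$ for $k>k_0$ (recall $a_k\to1$ by (\ref{def.ak})), we have $b/2+b(1-a_k)<b$, so the second factor is at least $c^*:=P(X_2>b)$, and $c^*>0$ because the right tail of $X_1$ is positive. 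Hence $\frac{P(X_1>w+ba_k)}{P(X_1+X_2>w+b)}\le\frac{1}{c^*}\,\frac{P(X_1>w+ba_k)}{P(X_1>w+ba_k-b/2)}$ for $k>k_0$ and every $w$.

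Finally, the tail form (\ref{tail.X1}) and the definition $\psi(k)=h^{-1}\!\left(\frac{4}{b}\log k\right)$ are used. Enlarging $k_0$ so that $\frac{4}{b}\log k$ lies in the range of $h$ for $k>k_0$ — whence $\psi(k)\in(u_0,\infty)$ and $\psi(k)\to\infty$ — for $w\ge\psi(k)$ both $w+ba_k$ and $w+ba_k-b/2$ exceed $u_0$ (as $ba_k\ge b/2$), so (\ref{tail.X1}) gives
\[
\frac{P(X_1>w+ba_k)}{P(X_1>w+ba_k-b/2)}=\exp\!\left(-\int_{w+ba_k-b/2}^{\,w+ba_k}h(v)\,dv\right)\ \le\ \exp\!\left(-\frac{b}{2}\,h\!\left(w+ba_k-b/2\right)\right)\ \le\ \exp\!\left(-\frac{b}{2}\,h(\psi(k))\right),
\]
using that $h$ is increasing and $w+ba_k-b/2\ge\psi(k)$; since $h(\psi(k))=\frac{4}{b}\log k$ this equals $k^{-2}$. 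Combining the displays, $\rho_k(u)\le (c^*)^{-1}k^{-2}$ uniformly in $u>0$, so $\gamma(k,u)\le 2(\la c^*k)^{-1}$ for $k>k_0$, and the lemma holds with $C=2/c^*$. The only genuine obstacle is the uneven split in the second paragraph: matching $X_1$'s threshold to the full $w+ba_k$ would discard all of the tail decay, whereas handing $X_1$ the slightly smaller threshold $w+ba_k-b/2$ and loading the surplus $b/2$ together with the vanishing correction $b(1-a_k)$ onto $X_2$ costs only the harmless constant $c^*$; the constant $4$ in the definition of $\psi$ and the shift $b/2$ are calibrated so that the resulting factor is exactly $k^{-2}$, which is precisely what is needed to absorb the $k+1$ coming from the factorials.
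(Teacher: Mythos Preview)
Your proof is correct and follows essentially the same route as the paper: cancel the prefactors, condition on $S_{k-1}$, pass to a supremum of one-dimensional tail ratios over $w\ge\psi(k)$, and use the explicit form (\ref{tail.X1}) together with $h(\psi(k))=\tfrac{4}{b}\log k$ to extract a factor $k^{-2}$. The only cosmetic difference is in the split of the denominator: the paper uses the simpler bound $P(S_{k+1}>u+b)\ge P(S_k>u)P(X_1>b)$, which leads directly to the ratio $P(X_1>w+ba_k)/P(X_1>w)$ over an interval of length $ba_k\ge b/2$, whereas your ``uneven split'' shifts the comparison to $P(X_1>w+ba_k)/P(X_1>w+ba_k-b/2)$; both choices produce exactly $\exp(-2\log k)=k^{-2}$ for the same reason, so your emphasis on the calibration of the split is a little overstated.
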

\begin{proof}
We have
$$
\gamma(k,u) \le \frac{k+1}{\la}\frac{P(S_k>u+ba_k\,,\, S_{k-1}\le g(k,u))}{P(S_k>u\,,\,X_{k+1}>b)} 
$$
$$
\le \frac{k+1}{\la P(X_1>b)}
\frac{\int_{-\infty}^{g(k,u)}P(X_1>u+ba_k-y)F_{S_{k-1}}(dy)}{\int_{-\infty}^{g(k,u)}P(X_1>u-y)F_{S_{k-1}}(dy)}
$$
$$
\le \frac{k+1}{\la P(X_1>b)}\sup_{y\le g(k,u)}\frac{P(X_1>u+ba_k-y)}{P(X_1>u-y)}\,.
$$
Because
\beam
\label{u.y}
u-y \ge u-g(k,u) = \psi(k) >0\,,
\eeam
we may apply (\ref{tail.X1}). Hence for $y\le g(k,u)$
$$
 \frac{P(X_1>u+ba_k-y)}{P(X_1>u-y)}= \exp\left(-\int_{u-y}^{u+ba_k-y}h(v)dv \right) \le \exp\left(-ba_kh(u-y) \right)\,.
$$
Taking into account (\ref{u.y}) and (\ref{def.psi}) we see that $h(u-y)\ge h(\psi(k)) =4\log k/b$. 
Formula (\ref{tail.X1}) implies $m=1$ in (\ref{def.m}). So
$ a_k = 1 - \frac{2\log k}k > \frac12$
for $k$ large enough, and, therefore, $ba_kh(u-y) \ge 2\log k\,$.
From here and the previous estimates
$$
\gamma(k,u) \le \frac{k+1}{\la P(X_1>b)}\frac1{k^2}
$$
for $k$ large enough, and the lemma follows.
\end{proof}

Denote
\beam
\label{def.I}
I(k,u)= \la \int_{a_k}^1\frac{(\la t)^{k-1}}{(k-1)!}P(S_k>u+bt\,,\,S_{k-1}\le g(k,u))dt\,.
\eeam
Since
$$
I(k,u) \le \frac{\la^k}{k!}P(S_k>u+ba_k\,,\, S_{k-1}\le g(k,u))\,,
$$
we immediately obtain the following statement.
\begin{corollary}
\label{c:gamma} There is an index $k_0$ such that
$$
\frac{I(k,u)}{\frac{\la^{k+1}}{(k+1)!}P(S_{k+1}>u+b)}
\le \frac{C}{\la k}
$$
for all $k>k_0$ and $u>0$, where the constant $C$ is  independent of these parameters.
\end{corollary}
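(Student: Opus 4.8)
The plan is to read this off directly from Lemma~\ref{l:gamma}: the corollary is nothing more than that lemma applied after one monotonicity estimate, so the substantive work is already done and what remains is routine.

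First I would justify the pointwise bound stated immediately before the corollary, namely $I(k,u)\le \frac{\la^k}{k!}P(S_k>u+ba_k,\,S_{k-1}\le g(k,u))$. In the defining integral for $I(k,u)$ the variable $t$ runs over $[a_k,1]$, so $bt\ge ba_k$ and hence $\{S_k>u+bt\}\subseteq\{S_k>u+ba_k\}$; replacing the probability in the integrand by $P(S_k>u+ba_k,\,S_{k-1}\le g(k,u))$ and using $\int_{a_k}^1 t^{k-1}\,dt\le\int_0^1 t^{k-1}\,dt=1/k$ collapses the prefactor $\la\cdot\la^{k-1}/(k-1)!\cdot(1/k)$ to $\la^k/k!$, which is exactly the claimed inequality.

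Then I would divide both sides of this inequality by $\frac{\la^{k+1}}{(k+1)!}P(S_{k+1}>u+b)$. The right-hand side becomes precisely the quantity $\gamma(k,u)$ appearing in Lemma~\ref{l:gamma}, so with the index $k_0$ furnished by that lemma and for every $u>0$ one has
$$
\frac{I(k,u)}{\frac{\la^{k+1}}{(k+1)!}P(S_{k+1}>u+b)}\le \gamma(k,u)\le\frac{C}{\la k}\qquad(k>k_0),
$$
which is the assertion. There is no genuine obstacle at this stage; the only thing to keep track of is that the index $k_0$ and the constant $C$ are inherited verbatim from Lemma~\ref{l:gamma}, hence still independent of $k$ and $u$. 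The real difficulty was handled earlier, in the proof of that lemma, where condition (\ref{cond.h}) and the definition (\ref{def.psi}) of $\psi$ were exploited — via $h(u-y)\ge h(\psi(k))=4\log k/b$ and $a_k>1/2$ for large $k$ — to force $ba_k h(u-y)\ge 2\log k$, and thus the decay $\gamma(k,u)\le C/(\la k^2)\le C/(\la k)$.
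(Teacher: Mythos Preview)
Your proof is correct and follows exactly the paper's approach: the paper simply records the monotonicity bound $I(k,u)\le \frac{\la^k}{k!}P(S_k>u+ba_k,\,S_{k-1}\le g(k,u))$ and then invokes Lemma~\ref{l:gamma}, precisely as you do. One minor slip in your closing recap: the decay coming out of the proof of Lemma~\ref{l:gamma} is $\gamma(k,u)\le\frac{k+1}{\la P(X_1>b)}\cdot\frac{1}{k^2}$, which is of order $C/(\la k)$ rather than $C/(\la k^2)$, but this does not affect the argument for the corollary.
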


Denote
\beam
\label{def.J}
J(k,u) = \la \int_{a_k}^1\frac{(\la t)^{k-1}}{(k-1)!}P(S_k>u+bt\,,\,g(k,u)< S_{k-1}\le u)dt\,.
\eeam

\begin{lemma}
\label{l:J}
For each positive $\ep$ there is an index $k_1$ such that
$$
\beta(k,u):= \frac{J(k,u)}{\frac{\la^k}{k!}P(S_k>u+b)} \le 1+\ep
$$
for all $k>k_1$ and $u>0$.
\end{lemma}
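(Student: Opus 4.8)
\emph{(Proof plan.)} The idea is to turn both the numerator and the denominator of $\beta(k,u)$ into integrals against the law of $S_{k-1}$ and to reduce the statement to a single uniform tail comparison. Since $X_k$ is independent of $S_{k-1}$, conditioning on $S_{k-1}=y$ gives, with $g:=g(k,u)$,
$$
J(k,u)=\la\int_{a_k}^1\frac{(\la t)^{k-1}}{(k-1)!}\int_{(g,\,u]}P\bigl(X_k>u+bt-y\bigr)\,F_{S_{k-1}}(dy)\,dt .
$$
For the denominator, keeping only the contribution of $\{g<S_{k-1}\le u\}$ and using
$\la\int_{a_k}^1\frac{(\la t)^{k-1}}{(k-1)!}\,dt=\frac{\la^k}{k!}\bigl(1-a_k^{\,k}\bigr)\le\frac{\la^k}{k!}$,
$$
\frac{\la^k}{k!}\,P(S_k>u+b)\ \ge\ \la\int_{a_k}^1\frac{(\la t)^{k-1}}{(k-1)!}\int_{(g,\,u]}P\bigl(X_k>u+b-y\bigr)\,F_{S_{k-1}}(dy)\,dt .
$$
Putting $s=u-y$, which by (\ref{def.g}) ranges over $[0,\psi(k))$ when $y\in(u-\psi(k),u]$, I then obtain, uniformly in $u>0$,
$$
\beta(k,u)\ \le\ \eta_k:=\sup_{0\le s<\psi(k)}\ \sup_{a_k\le t\le1}\ \frac{P(X_k>s+bt)}{P(X_k>s+b)} ,
$$
so it remains to prove $\eta_k\to1$.

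The main work is this last bound. By (\ref{tail.X1}) one has $m=1$ in (\ref{def.m}), so by (\ref{def.ak}) $a_k=1-2k^{-1}\log k$ and hence $0\le b(1-t)\le 2b\,k^{-1}\log k$ for $k$ large. For $s$ with $s+bt>u_0$ both arguments exceed $u_0$, and (\ref{tail.X1}) yields
$$
\frac{P(X_k>s+bt)}{P(X_k>s+b)}=\exp\Bigl(\int_{s+bt}^{s+b}h(v)\,dv\Bigr)\le\exp\bigl(b(1-t)\,h(s+b)\bigr)\le\exp\bigl(b(1-t)\,h(\psi(k)+b)\bigr),
$$
using that $h$ is increasing and $s+b<\psi(k)+b$. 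By (\ref{def.psi}) $h(\psi(k))=\tfrac4b\log k$, so (\ref{cond.h}) gives, for $k$ large, $h(\psi(k)+b)\le\exp\bigl(\tfrac b8\,h(\psi(k))\bigr)=k^{1/2}$, and therefore this ratio is at most $\exp\bigl(2b\,k^{-1/2}\log k\bigr)\to1$. The complementary range $s+bt\le u_0$ is bounded: there $s+bt$ and $s+b=s+bt+b(1-t)$ lie in a fixed compact interval on which $P(X_k>\cdot)$ is bounded below by a positive constant, and since the gap $b(1-t)$ tends to $0$ a direct estimate shows this ratio too tends to $1$ uniformly. Combining, $\eta_k\to1$.

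Finally, since for every $(t,y)$ in the relevant region $P(X_k>u+bt-y)=P(X_k>s+bt)\le\eta_k\,P(X_k>s+b)=\eta_k\,P(X_k>u+b-y)$, the two displays above give
$$
J(k,u)\le\eta_k\,\la\int_{a_k}^1\frac{(\la t)^{k-1}}{(k-1)!}\int_{(g,\,u]}P\bigl(X_k>u+b-y\bigr)\,F_{S_{k-1}}(dy)\,dt\le\eta_k\,\frac{\la^k}{k!}\,P(S_k>u+b),
$$
so $\beta(k,u)\le\eta_k$ for all $u>0$; choosing $k_1$ with $\eta_k\le1+\ep$ for $k>k_1$ completes the argument.

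The step I expect to be the real obstacle is the uniform bound $\eta_k\le1+o(1)$ --- in particular its uniformity in $u$, equivalently in $s\in[0,\psi(k))$. This is exactly what the truncation $g(k,u)<S_{k-1}\le u$ built into $J(k,u)$ achieves: it caps $u-S_{k-1}$ at $\psi(k)$, and $\psi$ is defined through $h^{-1}$ of $\tfrac4b\log k$ precisely so that the growth restriction (\ref{cond.h}) keeps $h(\psi(k)+b)$ no larger than $k^{1/2}$; without (\ref{cond.h}) the tail ratio could grow with $k$ and the lemma would fail. Disposing of the bounded range $s+bt\le u_0$ is a minor secondary point.
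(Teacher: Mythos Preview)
Your argument is correct and follows essentially the same route as the paper: both restrict the denominator integral over $F_{S_{k-1}}$ to $(g(k,u),u]$, reduce $\beta(k,u)$ to a supremum of one-jump tail ratios $P(X_1>s+bt)/P(X_1>s+b)$ with $s=u-y\in[0,\psi(k))$, and then use (\ref{tail.X1}), the monotonicity of $h$, and (\ref{cond.h}) to obtain $h(\psi(k)+b)\le k^{1/2}$ and hence the bound $\exp\bigl(2b\,k^{-1/2}\log k\bigr)\to1$. The only cosmetic difference is that the paper keeps the $t$-integral inside its quantity $\nu(k,u,y)$ and applies the mean value theorem to $\int_{s+bt}^{s+b}h(v)\,dv$, whereas you take the sup over $t$ first; your explicit mention of the small-argument range $s+bt\le u_0$ is a point the paper passes over silently.
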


\begin{proof}
We have 
\beam
\label{def.alpha}
\beta(k,u)\le \frac{\la\int_{g(k,u)}^u\int_{a_k}^1\frac{(\la t)^{k-1}}{(k-1)!}P(X_1>u+bt-y)dtF_{S_{k-1}}(dy)}
{\frac{\la^k}{k!}\int_{g(k,u)}^u P(X_1>u+b-y)F_{S_{k-1}}(dy)}
\eeam
$$
\le \sup_{g(k,u)<y\le u}\frac{k\int_{a_k}^1P(X_1>u+bt-y)t^{k-1}dt}{P(X_1>u+b-y)} :=\alpha(k,u)\,.
$$

 It follows from (\ref{tail.X1}) that
\beam
\label{def.nu}
\nu(k,u,y) := \frac{k\int_{a_k}^1P(X_1>u+bt-y)t^{k-1}dt}{P(X_1>u+b-y)}
\eeam
$$
= k\int_{a_k}^1\exp\left(\int_{u+bt-y}^{u+b-y}h(v)dv \right) t^{k-1}dt 
= k\int_{a_k}^1\exp\left(b(1-t)h(v(u,y,t) \right) t^{k-1}dt\,, 
$$
where 
$$
u+bt-y < v(u,y,t) <u+b-y\,.
$$
Because $u+b-y <u+b-g(k,u) = \psi(k)+b$, we get, using (\ref{cond.h}), 
$$
h(v(u,y,t)) \le h(\psi(k)+b) = h\left(h^{-1}\left(\frac{4\log k}b \right) +b \right)
\le \exp\left(\frac b8\frac{4\log k}b  \right) = \sqrt k
$$
for $k$ large enough. Since $a_k\le t <1$, we see, taking into account (\ref{def.ak})  that
$$
b(1-t)h(v(u,y,t)) \le \frac{2b\log k}{k}\sqrt k\ \to 0
$$
 as $k\to\infty$. So, (\ref{def.nu}) and the last estimates imply that there is an index $k^\prime$ such that
$\nu(k,u,y) < 1+\ep$ for all $k>k^\prime\,,\, u>0$ and $g(k,u)<y<u$, which yields $\alpha(k,u) <1+\ep$
for $k>k^\prime$ and $u>0$, and the lemma follows.

\end{proof}

\noindent{\it Proof of Theorem \ref{t:pos}.} According to Theorem 1 from \cite{bib:4} it is enough to show that
\beam
\label{H.eq}
\lim_{u\to\infty}\frac{P(Z(1)> u+b\gt )}{P(X(1)>u)} =1\,.
\eeam
Applying (\ref{def.Q}), (\ref{def.G}), (\ref{def.I}) and (\ref{def.J}) we may write
\beam
\label{H.rep}
P(Z(1)> u+b\gt ) \le Q(u) +  e^{-\la}\sum_{k=1}^\infty I(k,u) + e^{-\la}\sum_{k=1}^\infty J(k,u) +e^{-\la}G(u) \,.
\eeam
We show first that
\beam
\label{I.0}
\lim_{u\to\infty}\frac{\sum_{k=1}^\infty I(k,u)}{P(X(1)>u)} = 0\,.
\eeam
 Since $I(k,u) \le \la^k P(S_k>u)/k!$,
relations (\ref{tail.cP})  and (\ref{Sk.0}) yield
$$
\lim_{u\to\infty}\frac{\sum_{k=1}^M I(k,u)}{P(X(1)>u)} = 0
$$
for each fixed index $M$. Choosing $M>k_0$, where $k_0$ is from Corollary \ref{c:gamma}, we 
conclude that
$$
\frac{\sum_{k=M+1}^\infty I(k,u)}{P(X(1)>u)} \le \frac{C}{\la (M+1)} 
$$
for all $u>0$. Letting first $u\to\infty$ and then  $M\to\infty$ we come to (\ref{I.0}).

Now we show that
\beam
\label{J.1}
\limsup_{u\to\infty}\frac{e^{-\la}\sum_{k=1}^\infty J(k,u)}{P(X(1)>u)} \le 1. 
\eeam
For a fixed $\ep>0$   Lemma \ref{l:J} and (\ref{tail.cP}) provide us with an index $k_1$ such that
$$
\frac{e^{-\la}\sum_{k=k_1+1}^\infty J(k,u)}{P(X(1)>u)} < 1+\ep
$$
for all $u>0$. As above, (\ref{tail.cP}) and (\ref{Sk.0}) lead to the equality
$$
\lim_{u\to\infty}\frac{\sum_{k=1}^{k_1} J(k,u)}{P(X(1)>u)} = 0. 
$$
The last two relations imply (\ref{J.1}).

Now (\ref{H.rep}), (\ref{I.0}) , (\ref{J.1}) and Lemmas \ref{l:int} and \ref{l:G} give us
\beam
\label{limsup.H}
\limsup_{u\to\infty}\frac{P(Z(1)> u+b\gt )}{P(X(1)>u)} \le 1\,.
\eeam
Obviously, $P(Z(1)> u+b\gt )\ge P(X(1)>u)$ for all $u>0$, and we come to (\ref{H.eq}). \qed

\section{Proof of Theorem \ref{t:neg}.}

First we prove the following

\begin{proposition}   
\label{p:pl}
If  for $X_k$ the condition (\ref{cond.pl}) holds, then it also holds for random variable (\ref{comp.Poi}). 
\end{proposition}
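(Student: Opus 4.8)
The plan is to expand $P(Z>u)=e^{-\lambda}\sum_{k\ge1}\frac{\lambda^k}{k!}P(S_k>u)$ and to estimate each summand $P(S_k>u+a)$ by conditioning on $S_{k-1}$. Positivity is immediate: the first half of (\ref{cond.pl}) gives $P(Z>u)\ge e^{-\lambda}\lambda\,P(X_1>u)>0$ for every $u>0$, so only the limit requires work, and I will in fact prove (\ref{cond.pl}) for $Z$ with the same constant $a$.

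Fix $\epsilon>0$. Since $P(X_1>v+a)/P(X_1>v)\to0$, pick $A>0$ so that $P(X_1>v+a)\le\epsilon\,P(X_1>v)$ whenever $v\ge A$. For $k\ge1$ I would write $P(S_k>u+a)\le P(S_{k-1}>u)+\int_{(-\infty,u]}P(X_k>u+a-y)\,F_{S_{k-1}}(dy)$ and split the integral at $y=u-A$: on $(-\infty,u-A]$ one has $u-y\ge A$, so $P(X_k>u+a-y)\le\epsilon\,P(X_k>u-y)$ and this part is $\le\epsilon\,P(S_k>u)$; on $(u-A,u]$ one uses $P(X_k>u+a-y)\le1$ and bounds the mass by $P(S_{k-1}>u-A)$. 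Since $P(S_{k-1}>u)\le P(S_{k-1}>u-A)$, this gives, for all $k\ge1$ and $u>0$,
$$P(S_k>u+a)\le\epsilon\,P(S_k>u)+2\,P(S_{k-1}>u-A)$$
(with the convention $P(S_0>v)=\mathbf 1(v<0)$, so the $k=1$ term is covered too once $u>A$).

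Multiplying by $e^{-\lambda}\lambda^k/k!$ and summing over $k\ge1$ yields, for $u>A$,
$$P(Z>u+a)\le\epsilon\,P(Z>u)+2e^{-\lambda}G(u-A),$$
where $G$ is the function from (\ref{def.G}) attached to $Z$. By Lemma \ref{l:G} applied with $b=A$, $G(u-A)=o\bigl(P(Z>(u-A)+A)\bigr)=o\bigl(P(Z>u)\bigr)$ as $u\to\infty$, hence $\limsup_{u\to\infty}P(Z>u+a)/P(Z>u)\le\epsilon$; letting $\epsilon\downarrow0$ proves the proposition.

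The step I expect to be the crux is recognizing that a termwise bound $P(S_k>u+a)\le\epsilon\,P(S_k>u)$ cannot hold uniformly in $k$ — a sum of many light-tailed variables need not be rapidly varying — so the contribution of the ``boundary'' event $\{S_{k-1}\in(u-A,u]\}$ is unavoidable; the whole argument hinges on the observation that, after summation, this contribution is exactly the series $G$ of Lemma \ref{l:G}, which is already known to be negligible next to $P(Z>u+b)$ for every $b>0$.
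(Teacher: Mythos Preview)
Your proof is correct and follows the same strategy as the paper: bound $P(S_k>u+a)$ by conditioning on $S_{k-1}$, extract a main term $\epsilon\,P(S_k>u)$, and show that the summed residual involving $P(S_{k-1}>\,\cdot\,)$ is negligible compared with $P(Z>u)$.

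The organization differs in one useful way. The paper first proves an intermediate Lemma~\ref{l:pl}, which pushes the residual all the way back to $P(S_{k-1}>u)$ via a second conditioning step (introducing the constant $1/P(X_1>B)$), and then argues separately that $\sum_{k}\lambda^kP(S_{k-1}>u)/k!=o(P(Z>u))$. You instead stop at the weaker residual $2P(S_{k-1}>u-A)$, recognize that the resulting sum is exactly $e^{-\lambda}G(u-A)$, and invoke the already-proved Lemma~\ref{l:G} with $b=A$ to obtain $G(u-A)=o(P(Z>u))$. This bypasses Lemma~\ref{l:pl} entirely and avoids redoing a negligibility argument that the paper has essentially carried out once before; the price is the harmless shift from $u$ to $u-A$, which the substitution $u\mapsto u-A$ in Lemma~\ref{l:G} absorbs.
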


The proof is based on the next statement.

\begin{lemma}
\label{l:pl}
Assume $X_k$ satisfy (\ref{cond.pl}). Then for each  $\ep>0$ there is $B>0$ such that
$$
P(S_k>u+a)\le \ep P(S_k>u) + \frac{P(S_{k-1}>u)}{P(X_1>B)}
$$
for all $k\ge 3$ and $u>0$.
\end{lemma}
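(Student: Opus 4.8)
The plan is to condition on $S_{k-1}$ and split the integral $P(S_k>u+a)=\int P(X_k>u+a-y)\,F_{S_{k-1}}(dy)$ at $y=u-w_0$ and at $y=u$, where $w_0=w_0(\ep)$ is the threshold furnished by (\ref{cond.pl}) for the ratio $\ep/2$, i.e.\ $P(X_1>t+a)\le\frac\ep2 P(X_1>t)$ for $t\ge w_0$. On $\{y\le u-w_0\}$ one has $u-y\ge w_0$, so the integrand is $\le\frac\ep2 P(X_k>u-y)$ and this part is $\le\frac\ep2 P(S_k>u)$. On $\{y>u\}$ the integrand is $\le1$, so this part is $\le P(S_{k-1}>u)$. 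Everything else lies in the boundary layer $\{u-w_0<S_{k-1}\le u\}$, whose contribution is at most $P(u-w_0<S_{k-1}\le u)\le P(S_{k-1}>u-w_0)$; the whole content of the lemma is to control this last quantity.

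To do so I would use the hypothesis $k\ge3$, which lets me peel off one jump: write $S_{k-1}=S_{k-2}+X_{k-1}$, condition on $S_{k-2}$, and split at $S_{k-2}=u-w_0-B_0$ for a large constant $B_0=B_0(\ep,a)$. When $S_{k-2}>u-w_0-B_0$ the jump $X_{k-1}$ is confined to an interval of bounded length, so this part is $\le P(S_{k-2}>u-w_0-B_0)$, and the trivial inequality $P(S_{k-1}>u)\ge P(S_{k-2}>u-w_0-B_0)\,P(X_1\ge w_0+B_0)$ bounds it by a constant (depending only on $\ep,a$) times $P(S_{k-1}>u)$. The remaining part, where $X_{k-1}$ is forced to exceed $B_0$, is the essential one.

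The main obstacle, which I expect to require the bulk of the argument, is to bound this forced-large-jump contribution, \emph{uniformly in $k$ and $u$}, by $\frac\ep2 P(S_k>u)+C(\ep)P(S_{k-1}>u)$. This is where condition (\ref{cond.pl}) — that the right tail of $X_1$ is lighter than exponential — has to be used in an iterated, \emph{spreading} form, and where the extra summand of $S_k$ (as opposed to $S_{k-1}$) comes in: when $u$ is large relative to the number of summands, the cheapest way for a sum of i.i.d.\ light variables to exceed $u$ is to share the mass among all of them, which is far more likely — by a factor tending to infinity — than exceeding $u-w_0$ with a single summand of size $\approx u$, so this part of the boundary layer is absorbed into $\frac\ep2 P(S_k>u)$; when instead $u$ is only of the order of the number of summands, the per-summand level stays bounded, the tail of $S_{k-1}$ is correspondingly not too steep, and shifting its argument by $w_0$ changes $P(S_{k-1}>\cdot)$ by at most a fixed factor, giving the term $C(\ep)P(S_{k-1}>u)$. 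Turning this dichotomy into a rigorous, $k$-uniform estimate for an arbitrary lighter-than-exponential tail, rather than for a concrete model, is the hard part.

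Collecting the pieces gives $P(S_k>u+a)\le\ep P(S_k>u)+C'(\ep)P(S_{k-1}>u)$ for all $k\ge3$ and $u>0$, where $C'(\ep)$ is the sum of $1$, $1/P(X_1\ge w_0+B_0)$ and $C(\ep)$, all depending only on $\ep$ and $a$. Since $P(X_1>B)\to0$ as $B\to\infty$, I would finish by choosing $B$ so large that $P(X_1>B)\le 1/C'(\ep)$; then $C'(\ep)P(S_{k-1}>u)\le P(S_{k-1}>u)/P(X_1>B)$, which is the asserted inequality.
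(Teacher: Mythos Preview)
Your initial decomposition --- condition on $S_{k-1}$, split at $u-w_0$, reduce to controlling $P(S_{k-1}>u-w_0)$, then peel off one more jump and split $S_{k-2}$ at a second threshold --- is exactly what the paper does. The gap is in what you call ``the hard part'': you leave the forced-large-jump term as a heuristic dichotomy between ``$u$ large relative to $k$'' and ``$u$ of order $k$'', conceding that making this rigorous and uniform in $k$ is the real content. In fact that step is a one-liner, and your heuristic is pointing in an unnecessarily difficult direction.

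Here is the trick you are missing. At the stage where $S_{k-2}=t\le u-B$ and you must bound $P(X_{k-1}>u-w_0-t)$, do not try to compare it with $P(S_{k-1}>u)$; compare it with the \emph{two-jump} tail $P(X_{k-1}+X_k>u-t)$. Since
\[
P(X_1+X_2>u-t)\ \ge\ P(X_1>u-t-2w_0)\,P(X_2>2w_0),
\]
and since $u-w_0-t\ge B-w_0\to\infty$ as $B\to\infty$, condition~(\ref{cond.pl}) gives, for $B$ large,
\[
\frac{P(X_1>u-w_0-t)}{P(X_1+X_2>u-t)}\ \le\ \frac{1}{P(X_1>2w_0)}\cdot\frac{P(X_1>u-w_0-t)}{P(X_1>u-2w_0-t)}\ <\ \frac{\ep}{2}
\]
uniformly in $k,u$ and $t\le u-B$. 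Integrating against $F_{S_{k-2}}$ turns the left side into your ``forced-large-jump'' term and the right side into $\tfrac{\ep}{2}\int P(X_1+X_2>u-t)\,F_{S_{k-2}}(dt)=\tfrac{\ep}{2}P(S_k>u)$. No spreading argument, no $u/k$ dichotomy; the hypothesis $k\ge3$ is used only so that $S_{k-2}+X_{k-1}+X_k=S_k$. The remaining piece $P(S_{k-2}>u-B)\le P(S_{k-1}>u)/P(X_1>B)$ is exactly the second term in the lemma, so $B$ is the split threshold itself, not a constant chosen afterwards.
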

\begin{proof}
Fix $A>0$. Then 
$$
P(S_k>u+a) \le \int_{-\infty}^{u-A}P(X_1>u+a-t)F_{S_{k-1}}(dt) + P(S_{k-1}>u-A)\,.
$$
Because of (\ref{cond.pl}), there is $A_0>0$ such that
$
{P(X_1>u+a-t)}/{P(X_1>u-t)} < {\ep}/2
$
for all $A>A_0$ and $t<u-A$.
Then 
\beam
\label{ineq.1}
P(S_k>u+a)\le \frac{\ep}2 P(S_k>u) + P(S_{k-1}>u-A)\,.
\eeam

Fix now $A>A_0$. We have for a positive $B$:
\beam
\label{ineq.2}
P(S_{k-1}>u-A) \le \int_{-\infty}^{u-B}P(X_1>u-A-t)F_{S_{k-2}}(dt) + P(S_{k-2}>u-B)\,.
\eeam
Further, $P(X_1+X_2>u-t) \ge P(X_1>u-t-2A)P(X_1>2A)$,
and $t\le u-B$ implies $u-A-t\ge B-A$. Hence, once again applying (\ref{cond.pl}), we can choose $B$ so large that
$
{P(X_1>u-A-t)}/{P(X_1+X_2>u-t)} < {\ep}/2
$
if $t\le u-B$. Therefore,
$$
\int_{-\infty}^{u-B}P(X_1>u-A-t)F_{S_{k-2}}(dt) \le \frac{\ep}2\int_{-\infty}^{\infty}P(X_1+X_2>u-t)F_{S_{k-2}}(dt)
= \frac{\ep}2P(S_k>u)\,.
$$
Since
$
P(S_{k-2}>u-B) \le {P(S_{k-1}>u)}/{P(X_1>B)}\,,
$
the lemma follows from here, (\ref{ineq.2}) and (\ref{ineq.1}).
\end{proof}



\noindent{\it Proof of Proposition \ref{p:pl}.} According to Lemma \ref{l:pl}, for a fixed $\ep>0$ 
there is $B>0$ such that
\beam
\label{Z.pl}
P(Z>u+a) \le e^{-\la}\left[\la P(S_1>u+a) +\frac{\la^2P(S_2>u+a)}{2!} \right]
\eeam
$$
+ \ep e^{-\la}\sum_{k=3}^\infty\frac{\la^k P(S_k>u)}{k!} 
+ \frac{e^{-\la}}{P(X_1>B)}\sum_{k=3}^\infty\frac{\la^k P(S_{k-1}>u)}{k!}\,. 
$$
We show first the the last sum is $o(P(Z>u))$ as $u\to\infty$. To this end fix an index $m>3$.
Then
$$
\psi(u) := \sum_{k=3}^\infty\frac{\la^k P(S_{k-1}>u)}{k!} \le 
\sum_{k=3}^m\frac{\la^k P(S_{k-1}>u)}{k!} 
+ \frac{1}{m+1}\sum_{k=m+1}^\infty \frac{m+1}{k}\frac{\la^k P(S_{k-1}>u)}{(k-1)!}\,.
$$
Taking into account (\ref{Sk.0}) we see that
$$
\limsup_{u\to\infty}\frac{\psi(u)}{P(Z>u)} \le \frac{\la e^\la}{m+1}\,,
$$
and letting $m\to\infty$ we come to the needed conclusion.

Now, (\ref{Z.pl}) and (\ref{Sk.0}) yield that
$$
\limsup_{u\to\infty}\frac{P(Z>u+a)}{P(Z>u)} \le \ep
$$
for each $\ep>0$. So, the proposition follows. \qed

\noindent{\it Proof of Theorem \ref{t:neg}.} The proof  is a word for word repetition of the proof of Theorem 2
from \cite{bib:4}. To obtain formula (32) from  this paper one should use Proposition \ref{p:pl}. 

\section{Proof of Theorem \ref{t:example} }
\label{s:proof2}

Let $\{X_k\}_{k=1}^\infty$ be iid random variables taking values $n!\,, n=1,2,\dots$, such that
\beam
\label{n!}
P(X_1=n!)=\frac{1}{(e-1)n!}\,.
\eeam
Denote by $Z(t)$ a compound Poisson process with parameter $\la=1$ and jumps $X_k$. It will be shown below that
(\ref{limsup}) holds for the sequence $u_n = n!$ and (\ref{liminf}) holds for the sequence $u_n = n\cdot n!$.

\subsection {Estimates for sums} Here we obtain asymptotics for probabilities $P(S_k+B(1)>n!)$
and $P(S_k+B(1)>n\cdot n!)$ as $n\to\infty$. 
\begin{lemma}
\label{l:as1}
For $2\le k \le n$
$$
P(S_k+B(1)>n!) = kP(X_1=n!)\int_{-\infty}^\infty P(S_{k-1}>-t)\phi(t)dt + \frac{\alpha(k,n)}{(n+1)!}\,,
$$
where $\phi$ is $(0,1)$-normal density function and
$
\sup_{2\le k\le n}|\alpha(k,n)| <\infty\,.
$
\end{lemma}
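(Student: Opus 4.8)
The plan is to isolate the single dominant way in which $S_k+B(1)$ can exceed $n!$: that \emph{exactly one} of the $k$ jumps equals $n!$, while the remaining $k-1$ jumps together with $B(1)$ only add something of order $O(1)$. Every other configuration of the jumps should contribute $O(1/(n+1)!)$. The hypothesis $2\le k\le n$ is what makes this dichotomy sharp: if no jump reaches $n!$ then each jump is at most $(n-1)!$, so $S_k\le k(n-1)!\le n!$, and, because of the wide gaps between consecutive factorials, $S_k$ cannot come within $O(1)$ of $n!$ except in a configuration of astronomically small probability.

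First I would record an exact identity. Put $A=\{S_k+B(1)>n!\}$ and $N=\#\{j\le k:\ X_j=n!\}$. On $\{X_j=n!\}$ one has $S_k+B(1)=n!+(S_k-X_j)+B(1)$, and $S_k-X_j$ is independent of $X_j$ and of $B(1)$ with the law of $S_{k-1}$; hence $P(A,X_j=n!)=P(X_1=n!)\int_{-\infty}^{\infty}P(S_{k-1}>-t)\phi(t)\,dt$. Summing over $j$ and using $\sum_{j\le k}\mathbf 1_{\{X_j=n!\}}=N$ together with the pointwise identity $\mathbf 1_A(1-N)=\mathbf 1_A\mathbf 1_{\{N=0\}}-\mathbf 1_A(N-1)^{+}$, one obtains
\[
P(S_k+B(1)>n!)=k\,P(X_1=n!)\int_{-\infty}^{\infty}P(S_{k-1}>-t)\phi(t)\,dt+P(A,N=0)-E\bigl[\mathbf 1_A(N-1)^{+}\bigr].
\]
So $\alpha(k,n)=(n+1)!\,\bigl(P(A,N=0)-E[\mathbf 1_A(N-1)^{+}]\bigr)$, and everything reduces to bounding the two bracketed probabilities by $C/(n+1)!$.

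For the term $E[\mathbf 1_A(N-1)^{+}]$, use $(N-1)^{+}\le\binom N2$ to get $0\le E[\mathbf 1_A(N-1)^{+}]\le\binom k2 P(X_1=n!)^2=O(k^2/(n!)^2)$; since $k^2(n+1)!/(n!)^2=k^2(n+1)/n!\le n^2(n+1)/n!$ stays bounded, this is $O(1/(n+1)!)$. For $P(A,N=0)$, split according to the jumps: on $\{N=0\}$ each $X_j$ is either $\le(n-1)!$ or $\ge(n+1)!$. If some $X_j\ge(n+1)!$, then $S_k\ge(n+1)!>n!$ and $A$ holds, so this part is at most $P(\exists j:X_j\ge(n+1)!)\le k\,P(X_1\ge(n+1)!)$, where $P(X_1\ge(n+1)!)=\tfrac1{e-1}\sum_{m\ge n+1}\tfrac1{m!}=O(1/(n+1)!)$; with $k\le n$ this is the leading contribution to the remainder. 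If instead every $X_j\le(n-1)!$, then for $k<n$ one has $S_k\le(n-1)(n-1)!=n!-(n-1)!$, so $A$ forces $B(1)>(n-1)!$, while for $k=n$ the smallest positive value of $(n-1)!-m!$ is $(n-2)(n-2)!$, so $S_n$ misses $n!$ by at least $(n-2)(n-2)!$ unless every jump equals $(n-1)!$ — an event of probability $((e-1)(n-1)!)^{-n}$; either way this part is super-exponentially small, hence $o(1/(n+1)!)$.

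The step I expect to be the real point is the last one: showing that when $N=0$ and all jumps are at most $(n-1)!$ the sum $S_k$ cannot linger near $n!$. This is exactly where the arithmetic of the factorial values and the bound $k\le n$ are genuinely used; the remaining estimates are routine Gaussian tail bounds and crude union bounds over the jumps. Assembling them yields the asserted remainder $\alpha(k,n)/(n+1)!$, with $\alpha(k,n)$ controlled by $C\bigl(1+k\,(n+1)!\,P(X_1\ge(n+1)!)\bigr)$, which is finite over the range $2\le k\le n$.
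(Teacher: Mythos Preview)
Your argument is correct and takes a tidier route than the paper's. The paper conditions on $B(1)=t$, splits $\int P(S_k>n!-t)\phi(t)\,dt$ into three $t$-regions ($t<0$, $0\le t<n$, $t\ge n$), and inside each region does casework on $\max_j X_j$ to isolate the contribution $kP(X_1=n!)P(S_{k-1}>-t)$. Your identity $1=N+\mathbf 1_{\{N=0\}}-(N-1)^{+}$ with $N=\#\{j\le k:X_j=n!\}$ extracts the main term in a single stroke via $E[\mathbf 1_A N]=\sum_j P(A,X_j=n!)$ and reduces everything to bounding $P(A,N=0)$ and $E[\mathbf 1_A(N-1)^{+}]$; this avoids the region-splitting and makes the role of the factorial gaps (what you call ``the real point'') completely transparent. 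One small remark: both your proof and the paper's actually deliver $|\alpha(k,n)|\le C(1+k)$ rather than a bound independent of $k$, because the subcase ``some $X_j\ge(n+1)!$'' genuinely has probability of order $k/(n+1)!$ (the paper's term $np_3$ is the same thing). This is harmless, since the lemma is only applied after dividing by $k!$ and summing over $k$, but your closing phrase ``finite over the range $2\le k\le n$'' should not be read as a uniform-in-$n$ bound.
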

\begin{proof} 
We represent the considered probability as
\beam
\label{rep.int}
P(S_k+B(1)>n!) = \left(\int_{-\infty}^0 + \int_0^n + \int_n^\infty \right)P(S_k>n!-t)\phi(t)dt
:= I_1 +I_2+I_3\,,
\eeam
and start with the integral $I_1$. 
Assume first that $k=n$. The condition  
$$\max\{X_1,\dots,X_n\}\le (n-1)!
$$ 
implies $S_n\le n!-t$ for $t\le 0$. Hence,
$$
P(S_n>n!-t) = P\left(S_n>n!-t\,,\, \max\{X_1,\dots,X_n\} \ge n!\right)
$$
$$
= P\left(S_n>n!-t\,,\, \mbox{exactly one of $X_1,\dots,X_n$ is non-less than $n!$}\right)
$$
$$
+P\left(S_n>n!-t\,,\, \mbox{at least two of $X_1,\dots,X_n$ are non-less than $n!$}\right) : =p + q.
$$
Since $X_k$ are iid random variables,
$$
p = nP\left(S_n>n!-t\,,\,X_1\ge n!\,, \max\{X_2,\dots,X_n\} \le (n-1)!\right)
$$
$$
= nP\left(S_n>n!-t\,,\,X_1= n!\right)
- nP\left(S_n>n!-t\,,\,X_1= n!\,, \max\{X_2,\dots,X_n\} \ge n!\right)
$$
$$
+ nP\left(S_n>n!-t\,,\,X_1\ge (n+1)!\,, \max\{X_2,\dots,X_n\} \le (n-1)!\right)
:= n(p_1-p_2+p_3)\,.
$$
Obviously,
$
p_1 = P(X_1=n!)P(S_{n-1}>-t)\,,
$
and according to (\ref{n!}),
$$
p_2 \le P(X_1=n!)P\left( \max\{X_2,\dots,X_n\} \ge n!\right) \le C\frac{1}{n!}\frac{n-1}{n!}
$$
and
$$
p_3 \le P(X_1\ge (n+1)!) \le \frac{C_1}{(n+1)!}\,,
$$
 It is clear that
$$
q \le n^2\left[P(X_1\ge n!) \right]^2 \le \frac{C_2}{[(n-1)!]^2}\,.
$$ 
From here
\beam
\label{I1}
I_1 = nP(X_1=n!)\int_{-\infty}^0 P(S_{n-1}>-t)\phi(t)dt + O\left(\frac1{(n+1)!} \right)\,. 
\eeam

 Turn now to the integral $I_2$. For  $0< t < n$
$$
P(S_n>n!-t) = P\left(S_n>n!-t\,,\, \max\{X_1,\dots,X_n\} \ge n!\right)
$$ 
$$
+P\left(S_n>n!-t\,,\, \max\{X_1,\dots,X_n\} \le (n-1)!\right) := \widetilde p +  \widetilde q\,,
$$
and by the same reasons as above
$
\widetilde p = nP(X_1=n!)P(S_{n-1}>-t) + O\left({1}/{(n+1)!} \right)\,.
$
Further,
$$
\widetilde q= P(S_n >n!-t\,, X_1=\cdots =X_n= (n-1)!)
$$
$$
+P\left(S_n >n!-t\,, \max\{X_1,\dots,X_n\} =(n-1)!\,,\,\min\{X_1,\dots,X_n\} \le (n-2)! \right)
$$
$$
+P\left(S_n >n!-t\,, \max\{X_1,\dots,X_n\} \le (n-2)! \right) :=\widetilde q_1 +\widetilde q_2
+\widetilde q_3\,.
$$
If $\max\{X_1,\dots,X_n\} =(n-1)!$ and $\min\{X_1,\dots,X_n\} \le (n-2)!$, then
$$
S_n \le (n-1)\cdot(n-1)! + (n-2)! = n! - (n-2)\dot(n-2)! < n! -t
$$
for $t< n$. So, $\widetilde q_2=0$. By similar reasons $\widetilde q_3=0$ and
$$
\widetilde q_1 \le \left[P(X_1=(n-1)!) \right]^n\,. 
$$ 
Hence
$$
I_2 = nP(X_1=n!)\int_0^n P(S_{n-1}>-t)\phi(t)dt + O\left(\frac{1}{(n+1)!}\right)\,.
$$ 
Since 
\beam
\label{I3}
I_3 = O\left(\exp\left(-\frac{n^2}{2}\right) \right) = o \left(\frac{1}{(n+2)!} \right)
\eeam
the last relations and (\ref{rep.int}) yield the lemma for $k=n$. 

The case $2\le k <n$ is treated by the similar way.  
 \end{proof}
 
\begin{remark}{\rm
 \label{k=1}
The same reasons give us
\beam
\label{X1}
P(S_1+B(1)>n!) = O\left(\frac1{(n+1)!} \right)\,.
\eeam}
\end{remark}

\begin{lemma}
\label{l:as2}
For $2\le k\le n$
$$
P(S_k+B(1)>n\cdot n!) = kP(X_1=(n+1)!) + \frac{\beta(k,n)}{(n+2)!}\,,
$$
where
$
\sup_{2\le k\le n}|\beta(k,n)| <\infty\,.
$
\end{lemma}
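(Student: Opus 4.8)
The plan is to imitate, almost line by line, the proof of Lemma \ref{l:as1}, exploiting the same combinatorial features of the distribution (\ref{n!}). The decisive elementary remark is what makes the range $2\le k\le n$ the natural one: if every jump satisfies $X_i\le n!$, then $S_k\le k\,n!\le n\cdot n!$, so the event $\{S_k+B(1)>n\cdot n!\}$ can occur only if some jump is ``big'', i.e.\ $X_i\ge(n+1)!$, or if $B(1)$ is exceptionally large. Accordingly I would write $P(S_k+B(1)>n\cdot n!)=\int_{-\infty}^{\infty}P(S_k>n\cdot n!-t)\,\phi(t)\,dt$ and decompose the probability inside the integral according to the number $L$ of indices $i$ with $X_i\ge(n+1)!$.

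First I would dispose of the extreme values of $L$. For $L=0$ one has $S_k\le n\cdot n!$, so $P(S_k>n\cdot n!-t,\ L=0)=0$ for $t\le0$, while for $t>0$ attaining a value close to $n\cdot n!$ with $k\le n$ jumps bounded by $n!$ either needs $B(1)$ to exceed a quantity growing faster than any power of $n$ (handled by (\ref{norm})) or forces a degenerate configuration of probability at most $[P(X_1=n!)]^{k}$; both are $o\big(1/(n+2)!\big)$, uniformly in $2\le k\le n$. For $L\ge2$ a union bound gives a contribution $\le\binom{k}{2}\,[P(X_1\ge(n+1)!)]^{2}=O\big(k^{2}/((n+1)!)^{2}\big)$, which is $O\big(1/(n+2)!\big)$ uniformly for $k\le n$. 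The term $L=1$ is the main one: by exchangeability it equals $k$ times the probability of the event $\{X_1\ge(n+1)!,\ \max\{X_2,\dots,X_k\}\le n!,\ S_k+B(1)>n\cdot n!\}$.

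In this last expression I would separate $X_1=(n+1)!$ from $X_1\ge(n+2)!$, exactly as $p_1,p_2,p_3$ are separated in the proof of Lemma \ref{l:as1}. When $X_1=(n+1)!$ one has $S_k+B(1)=(n+1)!+(X_2+\cdots+X_k)+B(1)$, hence $\{S_k+B(1)>n\cdot n!\}=\{(X_2+\cdots+X_k)+B(1)>-n!\}$ holds off the negligible event $\{B(1)\le-n!\}$; combined with $P(X_1\le n!)^{k-1}=1+O\big(k/(n+1)!\big)$ this yields the main term $k\,P(X_1=(n+1)!)$ up to an error of order $k^{2}/((n+1)!)^{2}=O\big(1/(n+2)!\big)$. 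The residual part $X_1\ge(n+2)!$ is bounded crudely by $k\,P(X_1\ge(n+2)!)$, and $P(X_1\ge(n+2)!)=\frac{1}{e-1}\sum_{m\ge n+2}\frac{1}{m!}=O\big(1/(n+2)!\big)$. Collecting the three regimes $L=0,1,\ge2$ and carrying these errors through the integral against $\phi$ (which merely multiplies them by constants) gives $P(S_k+B(1)>n\cdot n!)=k\,P(X_1=(n+1)!)+\beta(k,n)/(n+2)!$ with $\beta$ bounded on $2\le k\le n$; as already in Lemma \ref{l:as1}, whatever factor $k$ survives in the error is harmless once the lemma is inserted into the Poisson-weighted sum $\sum_k e^{-\la}\la^{k}/k!$ used in the proof of Theorem \ref{t:example}.

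The hard part is not conceptual — it is the same bookkeeping as in Lemma \ref{l:as1} — but lies in checking, uniformly over $2\le k\le n$, that every ``automatic'' or ``negligible'' sub-event ($L=0$ with $S_k$ near $n\cdot n!$, two simultaneous big jumps, or $B(1)$ very negative) really is of the claimed order; this is exactly where the hypothesis $k\le n$, the super-exponential decay of (\ref{n!}), and the Gaussian tail estimate (\ref{norm}) must all be used together.
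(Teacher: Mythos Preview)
Your proposal is correct and follows essentially the same route as the paper: the paper also splits according to whether $\max_i X_i\ge(n+1)!$ (your $L\ge1$) or $\max_i X_i\le n!$ (your $L=0$), extracts the main term $kP(X_1=(n+1)!)$ from the exactly-one-big-jump case via exchangeability, uses the identity $n\cdot n!-(n+1)!=-n!$ together with positivity of the remaining jumps to see that $P(S_{k-1}>-n!-t)=1$ for $t>-n!$, and bounds all residual configurations (two big jumps, $X_1\ge(n+2)!$, all $X_i=n!$, or $|B(1)|>n$) by $O(1/(n+2)!)$ exactly as you outline. Your observation that a stray factor $k$ in the error is absorbed by the Poisson weights is also how the paper uses the lemma.
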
 
\begin{proof} Since the conditions $\max\{X_1,\dots,X_n\} =n!$ and $\min\{X_1,\dots,X_n\} \le (n-1)!$
imply
$
S_n \le (n-1)n! + (n-1)! = n\cdot n! - (n-1)(n-1)! < n\cdot n! -n\,,
$
then for $t<n$
$$
P(S_n > n\cdot n!-t) = P\left(S_n > n\cdot n!-t\,,\, X_1=\cdots =X_n=n! \right)
$$
$$
+ P\left(S_n > n\cdot n!-t\,,\,\max\{ X_1,\dots , X_n\}\ge (n+1)! \right)\,. 
$$
From here, as in the proof of the previous lemma,
$$
P(S_n > n\cdot n!-t) = nP(X_1=(n+1)!)P(S_{n-1}> -n!-t)+O\left(\frac1{(n+2)!} \right)\,,
$$
because $n\cdot n!-(n+1)!= -n!$. In the case $2\le k <n$ we get similarly for $t<n$
\beam
\label{Sk.nnt}
P(S_k > n\cdot n!-t) = kP(X_1=(n+1)!)P(S_{k-1}> -n!-t)+ \frac{\mu(k,n,t)}{(n+2)!}\,,
\eeam
where
$
\sup_{2\le k<n\,;t<n}|\mu(k,n,t)| <\infty\,.
$
These equalities and (\ref{I3}) give us
\beam
\label{Sk.nn}
P(S_k+B(1)>n\cdot n!) = kP(X_1=(n+1)!)\int_{-\infty}^\infty P(S_{k-1}> -n!-t)\phi(t)dt
\eeam
$$
+\frac{\widetilde\mu(k,n)}{(n+2)!}
$$
and
$ 
\sup_{2\le k\le n}|\widetilde\mu(k,n)|<\infty\,.
$
Since $X_k$ are positive, we have for $t>-n!$
\beam
\label{Sk.lower}
P(S_{k-1}> -n!-t) = 1\,.
\eeam
Hence, the last integral is
$$
\int_{-\infty}^{-n!}P(S_{k-1}> -n!-t)\phi(t)dt + P(B(1)>-n!)= 1 + o\left(\frac1{(n+3)!} \right)\,.
$$
From here and the previous relations the lemma follows.
\end{proof}

\begin{remark}{\rm
\label{r:k=12}
By the same way we obtain
\beam
\label{X12}
P(S_1+B(1)>n\cdot n!) = 
 P(X_1=(n+1)!)P(B(1)>-n!)+ O\left(\frac1{(n+2)!} \right) 
\eeam
$$
= P(X_1=(n+1)!)+ O\left(\frac1{(n+2)!} \right) 
$$}
\end{remark}
\subsection{Estimates for $X(1)$} Here we find asymptotics for the probabilities $P(X(1)> n!)$ and $P(X(1)> n\cdot n!)$.
\begin{lemma}
\label{l:est.X}
The following hold:
\beam
\label{X.n!}
P(X(1)>n!) = P(X_1=n!)\sum_{k=2}^nI_k + O\left(\frac1{(n+1)!} \right)\,, 
\eeam
where
\beam
\label{Ik}
I_k= \frac1{e(k-1)!}\int_{-\infty}^\infty P(S_{k-1}>-t)\phi(t)dt\,.
\eeam
\end{lemma}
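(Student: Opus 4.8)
The plan is to condition on the number of jumps of $Z$ in $[0,1]$ and then read off the claim from Lemma \ref{l:as1} and the estimate (\ref{X1}) term by term. Since the jumps are positive we write $X(1)=B(1)+S_{N(1)}$, where $N(1)$ is Poisson with parameter $\la=1$, independent of $B(1)$ and of $\{X_k\}$, so that
\[
P(X(1)>n!)=\frac1e\sum_{k=0}^\infty\frac1{k!}\,P\!\left(S_k+B(1)>n!\right).
\]
I would split this series over the ranges $k=0$, $k=1$, $2\le k\le n$ and $k>n$, and show that only the range $2\le k\le n$ contributes to the announced main term.

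The extreme ranges are handled directly. For $k=0$ the summand is $P(B(1)>n!)=O\!\left(\exp(-(n!)^2/2)\right)$, which is $o(1/(n+1)!)$. For $k=1$ the summand equals $e^{-1}P(S_1+B(1)>n!)=O(1/(n+1)!)$ by (\ref{X1}). For the tail $k>n$ I would use the crude bound $P(S_k+B(1)>n!)\le1$ together with $\sum_{k>n}1/k!\le\frac1{(n+1)!}\sum_{j\ge0}(n+2)^{-j}=O(1/(n+1)!)$, so this part of the series is also $O(1/(n+1)!)$.

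The main step is the middle range $2\le k\le n$, where Lemma \ref{l:as1} applies directly:
\[
\sum_{k=2}^n\frac1{e\,k!}\,P(S_k+B(1)>n!)
=P(X_1=n!)\sum_{k=2}^n\frac{k}{e\,k!}\int_{-\infty}^\infty P(S_{k-1}>-t)\phi(t)\,dt
+\frac1{(n+1)!}\sum_{k=2}^n\frac{\alpha(k,n)}{e\,k!}.
\]
Because $k/(e\,k!)=1/\bigl(e\,(k-1)!\bigr)$, the first term on the right is precisely $P(X_1=n!)\sum_{k=2}^n I_k$ with $I_k$ as in (\ref{Ik}). For the second term, the uniform bound $\sup_{2\le k\le n}|\alpha(k,n)|<\infty$ from Lemma \ref{l:as1}, together with $\sum_{k\ge2}1/k!<\infty$, shows it is $O(1/(n+1)!)$. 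Summing the four contributions yields (\ref{X.n!}).

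I do not expect a serious obstacle: the argument is essentially bookkeeping once Lemma \ref{l:as1} is in hand. The one point to watch is that the error terms from Lemma \ref{l:as1} are being summed over roughly $n$ indices, so one must use that $\alpha(k,n)$ is bounded \emph{uniformly} in $k$ and that the weights $1/k!$ decay fast enough that the accumulated error stays of order $1/(n+1)!$ rather than $n/(n+1)!$; the same factorial decay is what makes the Poisson tail $\{k>n\}$ negligible on the scale $1/(n+1)!$.
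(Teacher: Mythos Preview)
Your proposal is correct and follows essentially the same approach as the paper: both condition on the Poisson number of jumps, split the sum into $k\le 1$, $2\le k\le n$, and $k>n$, use (\ref{X1}) and the trivial tail bound for the outer pieces, and apply Lemma~\ref{l:as1} term by term for the middle range. Your remark about the uniformity of $\alpha(k,n)$ and the factorial decay of the weights is exactly the point that keeps the accumulated error at $O(1/(n+1)!)$.
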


\begin{proof}
We can write the considered probability as a sum of three sums:
$$
P(X(1)>n!) = e^{-1}\left[P(B(1)>n!) + P(S_1+B(1))>n!\right] 
$$
$$
+ e^{-1}\sum_{k=2}^n\frac{P(S_k+B(1)>n!)}{k!} +e^{-1}\sum_{n+1}^\infty\frac{P(S_k+B(1)>n!)}{k!},
$$
and (\ref{X1}) implies that the first sum is $O\left(1/{(n+1)!}\right)$. The same is true for the third sum.
As for the second one, Lemma \ref{l:as1} yields that it is
$$
P(X_1=n!)\sum_{k=2}^n\frac1{e(k-1)!}\int_{-\infty}^\infty P(S_{k-1}>-t)\phi(t)dt + O\left(\frac1{(n+1)!}\right)\,
$$
and (\ref{X.n!}) follows.
\end{proof}

\begin{lemma}
\label{l:est.X1}
The following relation holds:
\beam
\label{X.nn!}
P(X(1)>n\cdot n!) = P(X_1= (n+1)!) + O\left(\frac1{(n+2)!} \right)\,.
\eeam
\end{lemma}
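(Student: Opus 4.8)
The plan is to follow the scheme of the proof of Lemma~\ref{l:est.X}. Conditioning on the number of jumps of $Z$ in $[0,1]$ (recall $\la=1$) and using the independence of $B$, write
\[
P(X(1)>n\cdot n!) = e^{-1}\sum_{k=0}^\infty\frac1{k!}\,P(S_k+B(1)>n\cdot n!)\,,
\]
and break the series into the ranges $k=0$, $k=1$, $2\le k\le n$, and $k>n$. The term $k=0$ equals $e^{-1}P(B(1)>n\cdot n!)$, which is superexponentially small and absorbed by the error. For $k=1$ I would use \eqref{X12} from Remark~\ref{r:k=12}, and for $2\le k\le n$ Lemma~\ref{l:as2}; both give, after division by $k!$, a main term $\tfrac1{k!}\cdot k\,P(X_1=(n+1)!)$ together with an error $O(1/(n+2)!)$.

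Summing the main terms and using $\sum_{k=2}^{n}k/k! = \sum_{j=1}^{n-1}1/j!$ gives
\[
e^{-1}P(X_1=(n+1)!)\Bigl(1+\sum_{j=1}^{n-1}\tfrac1{j!}\Bigr) = e^{-1}P(X_1=(n+1)!)\sum_{j=0}^{n-1}\tfrac1{j!}\,.
\]
Since $e-\sum_{j=0}^{n-1}1/j! = O(1/n!)$ and $P(X_1=(n+1)!)=1/((e-1)(n+1)!)$, this differs from $P(X_1=(n+1)!)$ by $O\bigl(1/(n!\,(n+1)!)\bigr)=o(1/(n+2)!)$. The errors $\beta(k,n)/(n+2)!$ coming from Lemma~\ref{l:as2}, summed against $1/k!$ and using $\sup_{2\le k\le n}|\beta(k,n)|<\infty$, add up to $O(1/(n+2)!)$, and so does the error in \eqref{X12}.

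The crux is therefore the tail
\[
T_n:=\sum_{k>n}\frac1{k!}\,P(S_k+B(1)>n\cdot n!) = O\bigl(1/(n+2)!\bigr)\,.
\]
Here the trivial bound $P(\cdot)\le1$ is no longer adequate, as it only yields $O(1/(n+1)!)$; one must exploit that the jumps are positive and supported on $\{m!:m\ge1\}$. I would first discard the event $\{B(1)>n\}$: its probability is $O(e^{-n^2/2})=o(1/(n+2)!)$, and multiplying by $\sum_{k>n}1/k!\le e$ keeps it negligible; on the complement it suffices to bound $P(S_k>n\cdot n!-n)$, hence $P(S_k>(n-\tfrac12)n!)$ for $n$ large. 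Next split $n<k\le n^2/2$ from $k>n^2/2$; for the latter range $\sum_{k>n^2/2}1/k!\le C/(n^2/2)! = o(1/(n+2)!)$ already.

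For $n<k\le n^2/2$ I would decompose by the largest jump. If some $X_i\ge(n+1)!$, the contribution is at most $\sum_{n<k\le n^2/2}(k/k!)\,P(X_1\ge(n+1)!) = O(1/n!)\cdot O(1/(n+1)!) = o(1/(n+2)!)$. If all $X_i\le n!$, then because each jump is $\le n!$ and $k\le n^2/2$, the inequality $S_k>(n-\tfrac12)n!$ forces of order $n/2$ of the $k$ jumps to equal $n!$; this has probability at most $\bigl(n^2/(2(e-1)\,n!)\bigr)^{n/2}$, which is super-small and, summed against $1/k!$, negligible. Assembling these pieces gives $T_n=O(1/(n+2)!)$ and completes the proof. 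I expect the only genuine difficulty to be the bookkeeping in this last tail estimate — the right choice of the cut-offs $n$ and $n^2/2$ and of the truncation level for $B(1)$ — whereas the individual inequalities are elementary factorial computations; note that, unlike for the threshold $n!$ in Lemma~\ref{l:est.X}, here the crude bound $P(\cdot)\le1$ does not suffice for the $k>n$ tail.
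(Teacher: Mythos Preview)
Your argument is correct, and up to the tail $T_n$ it matches the paper exactly. The difference is in how you handle $T_n=\sum_{k>n}\frac1{k!}P(S_k+B(1)>n\cdot n!)$: you introduce a two-scale splitting $n<k\le n^2/2$ versus $k>n^2/2$ and a large-jump decomposition, which works but is more than is needed.

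The paper's observation is that the trivial bound already suffices for all $k\ge n+2$:
\[
\sum_{k\ge n+2}\frac1{k!}\,P(S_k+B(1)>n\cdot n!)\ \le\ \sum_{k\ge n+2}\frac1{k!}\ =\ O\!\left(\frac1{(n+2)!}\right).
\]
So the only term that genuinely needs attention is $k=n+1$. For that single term the paper notes that $\max_{i\le n+1}X_i\le (n-2)!$ forces $S_{n+1}\le (n+1)(n-2)!<n\cdot n!$, whence
\[
P(S_{n+1}>n\cdot n!)\ \le\ P\bigl(\max_{i\le n+1}X_i\ge (n-1)!\bigr)\ \le\ \frac{C(n+1)}{(n-1)!},
\]
and therefore $\dfrac{1}{(n+1)!}\,P(S_{n+1}+B(1)>n\cdot n!)=O\!\bigl(1/(n!(n-1)!)\bigr)=o\bigl(1/(n+2)!\bigr)$ after the usual truncation of $B(1)$ at level $n$. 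This replaces your whole $n<k\le n^2/2$ analysis by one line. So your remark that ``the crude bound $P(\cdot)\le1$ does not suffice for the $k>n$ tail'' is slightly off: it suffices for $k\ge n+2$, and only the single index $k=n+1$ needs the extra combinatorial estimate.
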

\begin{proof}
 We can write, using (\ref{X12}) and Lemma \ref{l:as2}, 
$$
P(X(1)>n\cdot n!) = e^{-1}P(B(1)>n\cdot n!) + P(X_1= (n+1)!)e^{-1}\sum_{k=1}^n\frac1{(k-1)!}
$$
$$
+ e^{-1}\frac{P(S_{n+1}>n\cdot n!)}{(n+1)!} + O\left(\frac1{(n+2)!} \right)\,.
$$
Since the condition $\max\{X_1,\dots,X_{n+1}\}\le (n-2)!$ implies $S_{n+1}\le (n+1)(n-2)! < n\cdot n!$,
we see that 
\beam
\label{est.Sn+1}
P(S_{n+1}>n\cdot n!) = P\left(S_{n+1}>n\cdot n!\,,\,\max\{X_1,\dots,X_{n+1}\}\ge (n-1)! \right)
\le \frac{C(n+1)}{(n-1)!}\,.
\eeam
 So, the needed relation follows. 
\end{proof}

\subsection {Proof of (\ref{limsup})} We have
\beam
\label{sup.1}
P\left(\sup_{0\le t \le 1}X(t) > n! \right) \ge P(X(1)>n!) + P(X(1)\le n!\,,\, X(\Gamma_{\tau-1})>n!)\,,
\eeam
where $\tau$ is given by (\ref{def.tau}). Further,
$$
p_k(n!) := P(\tau =k\,,\,X(1)\le n!\,,\, X(\Gamma_{k-1})>n!) 
$$
$$
= P\left(\tau =k\,,\,S_k+B(1)\le n!\,,\, S_{k-1}+B(\Gamma_{k-1})>n! \right) 
$$
Because $B(t)$ is symmetric and independent of $Z(t)$,
\beam
\label{p.low}
p_k(n!) \ge \frac12 P\left(\tau =k\,,\,S_k+B(\Gamma_{k-1})\le n!\,,\, S_{k-1}+B(\Gamma_{k-1})>n! \right)
:=\frac12 q_k(n!)\,. 
\eeam
Elementary calculations give us 
$$
q_k(n!) = \frac{1}{e(k-2)!}\int_0^1P\left(S_k+B(y)\le n!\,,\, S_{k-1}+B(y)>n! \right)
y^{k-1}(1-y)dy\,.
$$

Assume now that $3\le k\le n$. The same reasons as above and  the well known formula for the density of $B(y)$
imply
\beam
\label{rep.q}
q_k(n!) = (k-1)P(X_1=n!)
\eeam
$$
\times\frac{1}{e(k-2)!}\int_0^1\left[\int_{-\infty}^\infty P(S_{k-1}\le -t\,,\,S_{k-2}>-t)\frac1{\sqrt{2\pi y}}e^{-\frac{t^2}{2y}}dt\right]
y^{k-1}(1-y)dy 
$$
$$
+ \frac{\nu(k,n)}{(n+1)!} := (k-1)P(X_1=n!)J_{k} +  \frac{\nu(k,n)}{(n+1)!}\,,
$$
where 
$
\sup_{3\le k\le n}|\nu(k,n)| < \infty\,.
$
Because the jumps $X_k$ are positive, the inner integral coinsides with the integral over $(-\infty, -1)$,
and it is positive. So, 
\beam
\label{J.pos}
J_{k} >0 \quad \mbox{for all $ k\ge 3$}\,
\eeam
and (\ref{sup.1}) and (\ref{p.low}) imply 
\beam
\label{sup.lower}
P\left(\sup_{0\le t \le 1}X(t) > n! \right) \ge P(X(1)>n!) + \frac12 P(X_1=n!)\sum_{k=3}^n(k-1)J_{k}
+O\left(\frac1{(n+1)!} \right)\,. 
\eeam
According to (\ref{Ik}) $\sum_{k=2}^\infty I_k \le 1$.
From here, (\ref{sup.lower}), (\ref{n!}), (\ref{X.n!}) and (\ref{J.pos})
$$
\liminf_{n\to\infty}\frac{P\left(\sup_{0\le t \le 1}X(t) > n! \right)}{P(X(1)>n!)}
\ge 1 + \frac{\frac12 \sum_{k=3}^\infty(k-1)J_k}{\sum_{k=2}^\infty I_k} >1\,,
$$
and (\ref{limsup}) follows. \qed

\subsection{Proof of (\ref{liminf})} Using (\ref{sup.B}) and the positivity of $Z(t)$ we may write
\beam
\label{sup.nn}
P\left(\sup_{0\le t \le 1}X(t) > n\cdot n! \right) \le 
P\left(Z(1)+|B(1)| > n\cdot n! \right)\,.
\eeam
Applying (\ref{Sk.nnt}) and (\ref{Sk.lower}) we see that for $2\le k\le n$ and $0<t<n$ 
$$
P\left(S_k >n\cdot n! -t\right) = kP(X_1=(n+1)!) 
+ \frac{\widetilde\alpha(k,n,t)}{(n+2)!}
$$
and
$
\sup_{2\le k\le n\,;\,0<t<n}|\widetilde\alpha(k,n,t)|<\infty\,.
$
Integrating with respect to the distribution of $|B(1)|$ and using 
 (\ref{I3}) imply for $1\le k\le n$:
\beam
\label{max.SkB}
P\left(S_k +|B(1)| >n\cdot n! \right) = kP(X_1=(n+1)!) + O\left(\frac1{(n+2)!}\right)\,.
\eeam
The same reasons as in the proof of (\ref{est.Sn+1}) yield $P\left(S_{n+1} >n\cdot n! -t\right) \le C/(n-2)!$
for $0<t<n$. Applying (\ref{I3}) we conclude that 
$$
P\left(S_{n+1} +|B(1)| >n\cdot n! \right) = O\left( \frac{1}{(n-2)!} \right)\,.  
$$
From here and (\ref{sup.nn}) 
$$
P\left(\sup_{0\le t \le 1}X(t) > n\cdot n! \right) \le P(X_1=(n+1)!)\sum_{k=1}^n\frac1{e(k-1)!} 
+O\left(\frac1{(n+2)!} \right)\,,
$$
and (\ref{X.nn!})  and (\ref{n!}) imply that
$$
\limsup_{n\to\infty}\frac{P\left(\sup_{0\le t \le 1}X(t) > n\cdot n! \right)}
{P\left(X(1) > n\cdot n! \right)} \le 1\,.
$$
So, (\ref{liminf}) follows. \qed

\begin{remark}{\rm
\label{r:moment}
According to (\ref{n!}), the jumps $X_k$ of compound Poisson process $Z$ have not moments of positive
order. But one can consider jumps with the distribution
$$
 P(X_1=n!) = \frac{C(v)}{(n!)^v}\,,\, n=1,2,\dots,
$$  
where $v$ is a positive constant and $C(v)$ is the corresponding norming constant. Now jumps have finite moments 
of order less than $v$, and almost the same proof gives Theorem \ref{t:example}.  } 
\end{remark}

\section{Some comments}
\label{s:remarks}

\subsection{About the proof of Theorem \ref{t:main}} 
Looking on (\ref{equiv}) one may assume that the relation
$$
\lim_{u\to\infty}\frac{P\left(\sup_{0\le t\le 1}B(t) + \sup_{0\le t\le 1}Z(t)>u\right)}{P(B(1)+Z(1)>u)} =1
$$
also holds, which might shorten the proof of (\ref{est.C}).
It is true if the tail of $X_k$ is subexponential (see, for example, Proposition 2.1 from
\cite{bib:9} and references therein). Here we show that it is not true for 
light tails. 

\begin{proposition}
\label{p:pl2}
If (\ref{cond.pl}) holds for $X_k$, then
$$
\lim_{u\to\infty}\frac{P\left(\sup_{0\le t\le 1}B(t) + \sup_{0\le t\le 1}Z(t)>u\right)}{P(B(1)+Z(1)>u)} =2\,.
$$
\end{proposition}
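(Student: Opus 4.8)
The plan is to reduce the claim to Lemma~\ref{l:sym} together with Corollary~\ref{c:equiv}. Write $M=\sup_{0\le t\le1}Z(t)$. By L\'evy's identity (\ref{sup.B}) the variable $\sup_{0\le t\le1}B(t)$ has the same law as $|B(1)|$, and since the processes $B$ and $Z$ are independent this holds jointly: the pair $\bigl(\sup_{0\le t\le1}B(t),M\bigr)$ has the same distribution as $\bigl(|B(1)|,M\bigr)$ with $B(1)$ independent of $M$. Hence
$$
P\Bigl(\sup_{0\le t\le1}B(t)+\sup_{0\le t\le1}Z(t)>u\Bigr)=P\bigl(M+|B(1)|>u\bigr),
$$
and applying Lemma~\ref{l:sym} with $X=M$ and the symmetric variable $Y=B(1)$ gives, for every $u>0$,
$$
P\bigl(M+|B(1)|>u\bigr)=2\,P\bigl(M+B(1)>u\bigr)-P\bigl(M>u+|B(1)|\bigr).
$$

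Next I would handle the two terms on the right separately. Since $P\bigl(M+B(1)>u\bigr)=P\bigl(B(1)+\sup_{0\le t\le1}Z(t)>u\bigr)$, Corollary~\ref{c:equiv} gives $2P\bigl(M+B(1)>u\bigr)\sim 2P\bigl(B(1)+Z(1)>u\bigr)$. For the second term, note first that (\ref{cond.pl}) implies (\ref{light1}), so the jumps $X_k$ have a light tail and Theorem~1 of \cite{bib:4} (relation (\ref{sup.b}) with $a=1$ for compound Poisson processes with light tails) yields $P(M>u)\sim P(Z(1)>u)$. It then suffices to show $P(Z(1)>u)=o\bigl(P(B(1)+Z(1)>u)\bigr)$: by Proposition~\ref{p:pl} condition (\ref{cond.pl}) also holds for $Z(1)$, so from $P(B(1)+Z(1)>u)\ge P(B(1)>1)\,P(Z(1)>u-1)$ and $P(Z(1)>u)/P(Z(1)>u-1)\to0$ the claim follows. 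Consequently $P\bigl(M>u+|B(1)|\bigr)\le P(M>u)=o\bigl(P(B(1)+Z(1)>u)\bigr)$, and combining the three displays proves $P\bigl(\sup_{0\le t\le1}B(t)+\sup_{0\le t\le1}Z(t)>u\bigr)\sim 2P(B(1)+Z(1)>u)$.

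The step that needs the most care is showing that $P(M>u)$ is of strictly smaller order than $P(B(1)+Z(1)>u)$: this is exactly where the light-tail hypothesis enters twice, once through Theorem~1 of \cite{bib:4} to identify the tail of $M=\sup_{0\le t\le1}Z(t)$ with that of $Z(1)$, and once through Proposition~\ref{p:pl} to guarantee that convolving $Z(1)$ with the independent Gaussian $B(1)$ strictly enlarges the tail. Everything else --- the distributional identities and the appeals to Lemma~\ref{l:sym} and Corollary~\ref{c:equiv} --- is routine. Note that this is the same mechanism that produced the factor $2$ and the negligible correction term in the proof of (\ref{est.C}); the proposition shows that one cannot replace the genuine running supremum of $X$ by $\sup B+\sup Z$ without changing the constant.
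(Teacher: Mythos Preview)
Your proof is correct and uses the same ingredients as the paper's (L\'evy's identity $\sup B\eid|B(1)|$, Theorem~1 of \cite{bib:4}, Proposition~\ref{p:pl}, and Lemma~\ref{l:comp}/Corollary~\ref{c:equiv}), only organised slightly differently. You work from the numerator, applying Lemma~\ref{l:sym} to $P(M+|B(1)|>u)$ and then showing the correction term $P(M>u+|B(1)|)$ is negligible; the paper instead works from the denominator, splitting $P(B(1)+Z(1)>u)$ according to the sign of $B(1)$ and observing that $P(B(1)+Z(1)>u,\,B(1)>0)=\tfrac12 P(|B(1)|+Z(1)>u)$, which is precisely Lemma~\ref{l:sym} in disguise. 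One cosmetic point: when you bound $P(Z(1)>u)/P(B(1)+Z(1)>u)$ you should use the constant $a$ coming from Proposition~\ref{p:pl} rather than $1$, i.e.\ $P(B(1)+Z(1)>u)\ge P(B(1)>a)P(Z(1)>u-a)$; with that adjustment the argument is complete.
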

\begin{proof}
Clearly that
$$
P(B(1)+Z(1)>u) = P(B(1)+Z(1)>u\,,\,B(1)>0) + P(B(1)+Z(1)>u\,,\,B(1)<0)\,, 
$$
and the last probability does not exceed $P(Z(1)>u)$. On the other hand.
$$
P(B(1)+Z(1)>u) \ge P(Z(1)>u-a)P(B(1)>a)
$$ 
for $a>0$. From here and Proposition \ref{p:pl}
$$
\lim_{u\to\infty}\frac{P(B(1)+Z(1)>u\,,\,B(1)<0)}{P(B(1)+Z(1)>u)} =0\,,
$$
and , therefore,
$$
\lim_{u\to\infty}\frac{P(B(1)+Z(1)>u\,,\,B(1)>0)}{P(B(1)+Z(1)>u)} =1\,.
$$
Relation (\ref{sup.B}) implies
$$
P(B(1)+Z(1)>u\,,\,B(1)>0) = \frac12 P(|B(1)|+Z(1)>u)
=\frac12 P\left(\sup_{0\le t\le 1}B(t) +Z(1)>u\right)\,.
$$
But, according to Theorem 1 from \cite{bib:4} 
\beam
\label{sup.l}
P\left(\sup_{0\le t\le 1}Z(t)>u\right)\sim P\left(Z(1)>u\right)
\eeam
as $u\to\infty$. So, Lemma \ref{l:comp} implies
$$
\lim_{u\to\infty}\frac{P\left(|B(1)| + \sup_{0\le t\le 1}Z(t)>u\right)}{P(|B(1)|+Z(1)>u)} =1\,. 
$$
The last equalities yield the proposition.
\end{proof}

The limit considered in Proposition \ref{p:pl2} can belong to the interval $(1,2)$. 
To show this, assume that jumps $X_k$ satisfy the condition
$$
P(X_k>u) \sim e^{-\alpha u}u^\gamma\quad\mbox{as $u\to\infty$}\,,
$$ 
where $\alpha>0$ and $\gamma >-1$ are constants. Then  (\ref{light1}) holds. For a random 
variable $Y$ denote
$$
m_Y^+(\alpha)= \int_0^\infty e^{\alpha t}F_Y(dt)\,,\,m_Y^-(\alpha)= \int_{-\infty}^0 e^{\alpha t}F_Y(dt)\,,
$$ 
and $m_Y(\alpha)= m_Y^+(\alpha) + m_Y^-(\alpha)$. Lemma 4 from \cite{bib:44} gives us
$$
\lim_{u\to\infty}\frac{P(|B(1)| +Z(1)>u)}{P(Z(1)>u)} = m_{|B(1)|}(\alpha) = 2m_{B(1)}^+(\alpha)
$$
and
$$
\lim_{u\to\infty}\frac{P(B(1) +Z(1)>u)}{P(Z(1)>u)} = m_{B(1)}(\alpha)\,. 
$$
As above, applying  (\ref{sup.l}) and Lemma \ref{l:comp} we see that the limit under consideration is
$$
l := \frac{2m_{B(1)}^+(\alpha)}{m_{B(1)}^+(\alpha) + m_{B(1)}^-(\alpha)}\,.
$$
Because $m_{B(1)}^-(\alpha) < m_{B(1)}^+(\alpha)$, we conclude that $1<l<2$.

\subsection{A conjecture}

It is well known that if $X(t)$ is a symmetric L\'evy process, then
\beam
\label{Doob} 
P\left(\sup_{0\le t\le 1}X(t)>u \right) \le 2P(X(1)>u)
\eeam
for all $u>0$. Comparing it with (\ref{B.sup}) and Theorem \ref{t:main} naturally yields the following

{\bf Conjecture.} {\it Let $X(t)$ be a symmetric L\'evy process such that
\beam
\label{conj}
\limsup_{u\to\infty}\frac{P\left(\sup_{0\le t\le 1}X(t)>u \right)}{P(X(1)>u)} = 2\,.
\eeam
Then $X(t) = \sigma B(t)$ for a positive constant $\sigma$.} 

The following  statement supports this conjecture.

\begin{proposition}
\label{p:main} Let $X(t)$ be a symmetric compound Poisson process. Then
the strong inequality holds:
\beam
\label{main}
\limsup_{u\to\infty}\frac{P(\sup_{0\le t\le 1}X(t)>u )}{P(X(1)>u)} <2.
\eeam
\end{proposition}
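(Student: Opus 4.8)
The plan is to exploit the decomposition $X(t) = \sup_{0\le t\le 1}X(t)$ into its value up to the last jump time and the piece after it, exactly as in the proof of (\ref{est.C}), but now tracking the inequality \emph{strictly}. Write $X(t) = Z(t)$, a symmetric compound Poisson process with jumps $X_k$, and let $\tau$ and $\Gamma_\tau$ be as in (\ref{def.tau}). Since $Z$ has no Brownian component, the supremum over $[0,1]$ is attained at one of the jump times $\Gamma_1,\dots,\Gamma_\tau$ (or at $0$), so $\sup_{0\le t\le1}Z(t) = \max_{0\le k\le\tau}S_k$. The first step is to split
\[
P\Bigl(\sup_{0\le t\le1}Z(t)>u\Bigr) = P(S_\tau>u) + P\Bigl(S_\tau\le u,\ \max_{0\le k\le\tau}S_k>u\Bigr),
\]
and note $P(S_\tau>u)=P(Z(1)>u)$. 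So the ratio in (\ref{main}) equals $1$ plus the ratio of the ``overshoot before the last jump'' probability to $P(Z(1)>u)$. Thus it suffices to prove
\[
\limsup_{u\to\infty}\frac{P\bigl(S_\tau\le u,\ \max_{0\le k<\tau}S_k>u\bigr)}{P(Z(1)>u)}<1.
\]

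The second step is to control the numerator. Conditioning on $\tau=k$ (probability $e^{-\la}\la^k/k!$) and using that, given $\tau=k$, the partial sums are $S_1,\dots,S_k$, the event $\{S_k\le u,\ \max_{j<k}S_j>u\}$ forces some earlier partial sum $S_j$ ($1\le j\le k-1$) to exceed $u$ while $S_k=S_j+(X_{j+1}+\cdots+X_k)\le u$; hence the increment after the record is $\le 0$, and by symmetry of the $X_i$ this costs a factor. More precisely, I would bound the numerator by
\[
\sum_{k\ge 2}\frac{e^{-\la}\la^k}{k!}\sum_{j=1}^{k-1}P\bigl(S_j>u,\ X_{j+1}+\cdots+X_k\le 0\bigr)
= \sum_{k\ge2}\frac{e^{-\la}\la^k}{k!}\sum_{j=1}^{k-1}P(S_j>u)\,P(S_{k-j}\le 0),
\]
using independence and the record decomposition (taking the \emph{first} index $j$ at which $S_j>u$ makes the events disjoint, so this is an upper bound). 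On the other hand $P(Z(1)>u)\ge e^{-\la}\sum_{k\ge1}\frac{\la^k}{k!}P(S_k>u)$, and by symmetry $P(S_{k-j}\le 0)\le 1/2$ unless $S_{k-j}$ has an atom at $0$ — but even then $P(S_{k-j}\le 0)\le \tfrac12(1+P(S_{k-j}=0))$, and for $k-j\ge 1$ a continuous or light-tailed jump distribution gives $P(S_{k-j}=0)<1$ strictly, so $P(S_{k-j}\le 0)\le c<1$ for all $k-j\ge1$ with $c$ not depending on $k,j$. The light-tail hypothesis also gives, via Lemma \ref{l:Y}-type reasoning (or Lemma~4 of \cite{bib:4}), that the tail $P(S_k>u)$ is dominated by $P(S_{k+1}>u)$, so the dominant contributions to both numerator and denominator come from large $k$, and there the factor $P(S_{k-j}\le0)\le c$ is uniform.

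The third step assembles these: comparing the two sums term by term and using $P(S_{k-j}\le 0)\le c<1$, one gets
\[
\limsup_{u\to\infty}\frac{P\bigl(S_\tau\le u,\ \max_{0\le k<\tau}S_k>u\bigr)}{P(Z(1)>u)}\le c<1,
\]
whence (\ref{main}) follows with the limsup bounded by $1+c<2$. The main obstacle I anticipate is twofold: first, making the ``first record'' decomposition genuinely yield disjoint events so that the crude union bound is actually valid and the $P(S_{k-j}\le 0)$ factor appears cleanly (one must be careful that $\{S_j>u\}$ in the bound should be $\{S_1\le u,\dots,S_{j-1}\le u,\ S_j>u\}$ and then one drops the earlier constraints to get an upper bound, which is fine); and second, handling a possible atom of the jump distribution at points that could make $P(S_m=0)$ large for small $m$ — but since only $m=k-j\ge1$ enters and the jump law is not degenerate at $0$ (it is light-tailed with $P(X_1>\alpha)>0$), one checks $P(S_m=0)$ is bounded away from $1$, and for the tail-dominant large-$k$ terms this is immaterial. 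A symmetrization/Lévy-type argument could also be invoked to get $P(S_{k-j}\le0)\le\tfrac12$ directly when the distribution is non-atomic at $0$, which is the cleanest case.
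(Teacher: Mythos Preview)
Your approach has two genuine gaps.

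\medskip
\textbf{First, you invoke a hypothesis that is not there.} Proposition~\ref{p:main} is stated for an \emph{arbitrary} symmetric compound Poisson process: there is no light-tail assumption on the jump law. Your appeal to ``the light-tail hypothesis'' and to Lemma~4 of \cite{bib:4} (which says $P(S_k>u)=o(P(S_{k+1}>u))$) is therefore unjustified; for, say, subexponential jumps the opposite relation $P(S_k>u)\sim \tfrac{k}{k+1}P(S_{k+1}>u)$ holds, and the ``dominant contributions come from large $k$'' heuristic collapses.

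\medskip
\textbf{Second, the ``term-by-term'' comparison is not valid as you describe it, and the resulting bound is not strong enough.} After the first-record decomposition one has (with the correct strict inequality $S_k-S_j<0$, so that $P(S_{k-j}<0)\le\tfrac12$ by symmetry):
\[
P\bigl(S_\tau\le u,\ \max_{j<\tau}S_j>u\bigr)\;\le\;\tfrac12\,e^{-\la}\sum_{k\ge 2}\frac{\la^k}{k!}\,P\Bigl(\max_{j\le k-1}S_j>u\Bigr).
\]
Applying L\'evy's inequality $P(\max_{j\le k-1}S_j>u)\le 2P(S_{k-1}>u)$ and shifting the index yields
\[
\frac{\text{numerator}}{P(Z(1)>u)}\;\le\;\frac{\sum_{n\ge1}\frac{\la}{n+1}\,\frac{\la^n}{n!}P(S_n>u)}{\sum_{n\ge1}\frac{\la^n}{n!}P(S_n>u)},
\]
a weighted average of $\la/(n+1)$. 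This is bounded by $\la/2$, which is $<1$ only when $\la<2$. For large intensities the bound exceeds $1$ and says nothing. There is no way to ``compare term by term'': the numerator is a genuine double sum whose coefficients do not match those of the denominator.

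\medskip
The paper avoids both difficulties by arguing \emph{by contradiction}. It first proves the exact inequality (Lemma~\ref{l:one})
\[
P\Bigl(\sup_{0\le t\le1}X(t)>u\Bigr)\le 2P(X(1)>u)-D(u),
\]
with $D(u)$ containing the terms $P(\max_{j<n}S_j\le u,\ S_n>u)$. If the $\limsup$ were $2$, then along a subsequence $u_j$ one would have $D(u_j)=o(P(X(1)>u_j))$; from this, using $P(\max_{j<n}S_j\le u,\ S_n>u)\ge P(S_n>u)-2P(S_{n-1}>u)$ and induction, one deduces $P(S_n>u_j)=o(P(X(1)>u_j))$ for \emph{every fixed} $n$. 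But then the same shifted sum that controls $P(\sup X>u_j,\ X(1)\le u_j)$ is $o(P(X(1)>u_j))$ (split at a large index $M$ and use $\la^k/k!\le \frac{\la}{M+1}\la^{k-1}/(k-1)!$ for $k>M$), forcing the ratio to tend to $1$ --- a contradiction. No tail assumption on the jumps is needed.
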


To show it we need the following 
\begin{lemma}
\label{l:one}
Let $X(t)$ be a symmetric compound Poisson process with jumps $X_k$ and parameter $\la$. Then for all positive $u$:
$$
P\left(\sup_{0\le t \le 1}X(t)>u \right)\le 2P\left(X(1)>u \right) - D(u)\,,
$$
where
\beam
\label{D}
D(u) = e^{-\la}\left[\la P(X_1>u) +\sum_{n=2}^\infty \frac{\la^n}{n!}
P\left(\max_{1\le k \le n-1}S_k\le u,\,S_n>u \right)\right]\,.
\eeam
\end{lemma}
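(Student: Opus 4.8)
\textit{Plan.} The idea is to condition on the number $N(1)$ of jumps of the process on $[0,1]$ and thereby reduce the statement to a single inequality for the underlying random walk $(S_k)$, which will be a refinement of L\'evy's reflection bound $P(\max_{k\le n}S_k>u)\le 2P(S_n>u)$.

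First I would record that the compound Poisson path is right-continuous and piecewise constant, taking the values $0,S_1,\dots,S_{N(1)}$, so that $\sup_{0\le t\le1}X(t)=\max\{0,S_1,\dots,S_{N(1)}\}$; conditioning on $N(1)$ (which is $\Poi(\la)$) gives, for $u>0$,
$$
P\Big(\sup_{0\le t\le1}X(t)>u\Big)=e^{-\la}\sum_{n\ge1}\frac{\la^n}{n!}\,P\Big(\max_{1\le k\le n}S_k>u\Big),\qquad
P(X(1)>u)=e^{-\la}\sum_{n\ge1}\frac{\la^n}{n!}\,P(S_n>u).
$$
For $n\ge1$ and $1\le k\le n$ set $A_k=\{S_1\le u,\dots,S_{k-1}\le u,\ S_k>u\}$, so that $A_1=\{X_1>u\}$, the $A_k$ are pairwise disjoint, $\bigcup_{k=1}^nA_k=\{\max_{1\le k\le n}S_k>u\}$, and $A_n$ is precisely the event occurring in the $n$-th summand of $D(u)$. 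Thus it suffices to prove, for every $n\ge1$,
$$
P\Big(\max_{1\le k\le n}S_k>u\Big)\ \le\ 2P(S_n>u)-P(A_n);
$$
multiplying by $e^{-\la}\la^n/n!$ and summing over $n$ then identifies the left-hand series with $P(\sup_{0\le t\le1}X(t)>u)$ and the two right-hand series with $2P(X(1)>u)$ and with $D(u)$, respectively.

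To prove the displayed inequality I would use reflection. Observe that $A_k\in\sigma(X_1,\dots,X_k)$ and that on $A_k$ one has $S_k>u$. For $k<n$ the increment $S_n-S_k=X_{k+1}+\cdots+X_n$ is symmetric and independent of $A_k$, hence $P(S_n\ge S_k\mid A_k)=P(S_n-S_k\ge0)=\tfrac12\bigl(1+P(S_n=S_k)\bigr)\ge\tfrac12$, while for $k=n$ one has $S_n-S_k\equiv0$ and this conditional probability equals $1$. Since the events $A_k\cap\{S_n\ge S_k\}$ are disjoint and each is contained in $\{S_n>u\}$ (because $S_n\ge S_k>u$ there), summing yields
$$
P(S_n>u)\ \ge\ \sum_{k=1}^{n}P(A_k)\,P(S_n\ge S_k)\ \ge\ \tfrac12\sum_{k=1}^{n-1}P(A_k)+P(A_n),
$$
so that $2P(S_n>u)\ge\sum_{k=1}^nP(A_k)+P(A_n)=P(\max_{1\le k\le n}S_k>u)+P(A_n)$, which is the claim.

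The only point that really requires care is the refinement over the plain L\'evy inequality: one must single out the index $k=n$, where $S_n-S_k\equiv0$ so that the factor $\tfrac12$ is \emph{not} lost, and it is exactly this gain that produces the extra term $-P(A_n)$ and, after summation, $-D(u)$. Everything else — the piecewise-constant description of the path, the independence of $A_k$ from $(X_{k+1},\dots,X_n)$, the symmetry of $S_n-S_k$, and the reindexing of the Poisson series — is routine.
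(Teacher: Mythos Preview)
Your proof is correct and follows essentially the same route as the paper: both condition on $N(1)$, decompose by the first-passage index $A_k=\{S_1\le u,\dots,S_{k-1}\le u,\,S_k>u\}$, and exploit the symmetry and independence of the post-$k$ increment $S_n-S_k$ (the L\'evy reflection idea). The only cosmetic difference is that the paper first splits off $P(X(1)>u)$ and bounds $P(\max_{k\le n-1}S_k>u,\,S_n\le u)\le P(S_n>u)-P(A_n)$, whereas you prove the equivalent per-$n$ inequality $P(\max_{k\le n}S_k>u)\le 2P(S_n>u)-P(A_n)$ directly; your explicit remark that the gain comes from $k=n$ (where the factor $\tfrac12$ is not lost) makes the source of the extra $-D(u)$ particularly transparent.
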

\begin{proof}
The proof is a modification of the proof of Levy inequality (see \cite{bib:55}, p. 50). We have
$$
P\left(\sup_{0\le t\le 1}X(t)> u\right) = P(X(1)>u) + P\left(\sup_{0\le t\le 1}X(t)> u,\, X(1) \le u\right)\,,
$$
and
\beam
\label{main.eq}
P\left(\sup_{0\le t\le 1}X(t)>u,\,X(1)\le u \right)
= e^{-\la}\left[\sum_{n=2}^\infty\frac{\la^n}{n!}P\left(\max_{1\le k\le n-1}S_k >u,\, S_n\le u \right) \right]\,.
\eeam
For any $n> 2$
$$
P\left(\max_{1\le k\le n-1}S_k >u,\, S_n\le u \right) 
$$
$$
= P(S_1>u,\, S_n\le u) + \cdots+
P(S_1\le u,\dots, S_{n-2}\le u,\, S_{n-1}>u, S_n\le u)
$$
$$
\le P(S_1>u,\,X_2+\cdots+X_n\le 0) +\cdots+ P(S_1\le u,\dots, S_{n-2}\le u, S_{n-1}>u,\,X_n\le 0).
$$
Since random variables $X_k$ are independent and symmetric, the last line can be written as
$$
P(S_1>u,\, X_2+\cdots+X_n\ge 0)+\cdots+ P(S_1\le u,\dots, S_{n-2}\le u,\,S_{n-1}>u, X_n\ge 0)
$$
$$
\le P(S_1>u,\,S_n>u)+\cdots+P(S_1\le u,\dots,S_{n-2}\le u,\, S_{n-1}>u,\, S_n>u)
$$
$$
= P\left(\max_{1\le k\le n-1}S_k>u,\, S_n>u \right)
= P(S_n>u) -P\left(\max_{1\le k \le n-1}S_k\le u,\, S_n>u \right)\,.
$$
The same inequality holds for $n=2$. Therefore,
$$
P\left(\sup_{0\le t \le 1}X(t)>u\right) \le P(X(1)>u) +  
e^{-\la}\sum_{n=2}^\infty\frac{\la^n}{n!}P(S_n>u)
$$
$$
- e^{-\la}\sum_{n=2}^\infty\frac{\la^n}{n!}P\left(\max_{1\le k\le n-1}S_k\le u,\,S_n>u \right)
= 2P(X(1)>u) -D(u)\,. 
$$
\end{proof}

{\it Proof of Proposition \ref{p:main}.}
If the upper limit in (\ref{main})
is equal to 2,  there is a sequence $u_j\to\infty$ such that
\beam
\label{uj1}
\lim_{j\to\infty}\frac{P(\sup_{0\le t \le 1}X(t)>u_j)}{P(X(1)>u_j)} =2\,.
\eeam
Then Lemma \ref{l:one} yields
\beam
\label{D.zero}
\lim_{j\to\infty}\frac{D(u_j)}{P(X(1)>u_j)} =0\,,
\eeam
which implies, in particular,
$$
\lim_{j\to\infty}\frac{P(X_1>u_j)}{P(X(1)>u_j)} =0\,.
$$
Applying L\'evy inequality we get for $n\ge 2$
$$
P\left(\max_{1\le k\le n-1}S_k\le u,\, S_n>u \right)
\ge P(S_n>u) -P\left(\max_{1\le k\le n-1} S_k>u\right)
$$
$$
\ge P(S_n>u) -2P(S_{n-1}>u)\,.
$$
Using (\ref{D.zero}), (\ref{D})  and the induction one comes to the relation
\beam
\label{Sn.0}
\lim_{j\to\infty}\frac{P(S_n>u_j)}{P(X(1)>u_j)}= 0
\eeam
for all $n\ge 2$. 

Further, once again using L\'evy inequality we get from (\ref{main.eq})
$$
P\left(\sup_{0\le t\le 1}X(t)>u_j,\,X(1)\le u_j \right) \le 2e^{-\la}\sum_{n=2}^\infty\frac{\la^n}{n!}P(S_{n-1}>u_j),
$$
and the same reasons as in the proof of Proposition \ref{p:pl} show that this sum is 
$o(P(X(1))>u_j)$ as $j\to\infty$. Hence,
$$
\lim_{j\to\infty}\frac{P(\sup_{0\le t \le 1}X(t)>u_j)}{P(X(1)>u_j)} = 1\,,
$$
which contradicts to (\ref{uj1}). \qed


\begin{thebibliography}{99}     
\bibitem{bib:1} {J.M.P. Albin \and  M. Sunden}, {\it On the asymptotic behaviour of L\'evy
processes. Part I: Subexponential  and exponential processes,} Stochastic Process.  
Appl.,  119 (2009), pp. 281--304. 

\bibitem{bib:2} {J.M.P. Albin}, { \it Extremes of totally skewed stable motion,} Statist. Probab. Lett.
16 (1993), pp. 219--224.
\bibitem{bib:3} {S. Berman}, {\it The supremum of a process with stationary independent
symmetric increments,} Stochastic Process. Appl. 23 (1986), pp. 281--290.
\bibitem{bib:4} { M. Braverman}, {\it Suprema of compound Poisson processes with light
tails,} Stochastic Process. Appl. 90 (2000), pp. 145--156. 
\bibitem{bib:44} {M.Braverman}, {\it On a class of L\'evy processes}, Statist.Probab.Lett.
75 (2005), pp. 179--189.
\bibitem{bib:5} { M.Braverman \and G. Samorodnitsky}, {\it Functionals of infinitely
divisible stochastic processes with exponential tails,} Stochastic Process. Appl.
56 (1995), pp. 207--231.
\bibitem{bib:7} {I. Gikhman \and A. Skorohod}, {\it Introduction to the Theory of Stochastic
Processes,} Dover, Mineola, NY, 1996.
\bibitem{bib:77}{V.M. Kruglov}, {\it Characterization of a class of infinitely divisible
distributions in Hilbert space}, Mat. Zametki 16 (1974), pp. 777--782 (Russian). English transl.:
Mat.Notes 16 (1974), pp. 1057--1066.
\bibitem{bib:8} {M.B. Marcus}, {\it $\xi-$Radical Processes and Random Fourier Series,}
Memoirs of the American Mathematical Society  368,
AMS, Providence, RI, 1987.
\bibitem{bib:55} {V. Petrov}, {\it Limit Theorems of Probability Theory,} Clarendon Press, 
Oxford, 1995.
\bibitem{bib:9} {J. Rosi\'nski \and G, Samorodnitsky}, {\it Distributions of subadditive
functionals of sample paths of infinitely divisible processes,} Ann. Probab.
21 (1993), pp. 996--1014.
\bibitem{bib:10} { E. Willekens} {\it On the supremum of an infinitely divisible process,}
Stochastic Process. Appl. 26 {1987}, pp. 173--175.
\end{thebibliography}
\end{document}